\newcommand{\R}{\mathbb{R}}
\newcommand{\Z}{\mathbb{Z}}
\newcommand{\V}{\mathbf{V}}
\newcommand{\calR}{\mathcal{R}}
\newcommand{\calJ}{\mathcal{J}}
\newcommand{\calC}{\mathcal{C}}
\newcommand{\fm}{\mathfrak{m}}
\newcommand{\tS}{\mathtt{S}}
\newcommand{\tR}{\mathtt{R}}
\newcommand{\tJ}{\mathtt{J}}
\newcommand{\tI}{\mathtt{I}}
\newcommand{\calS}{\mathcal{S}}
\newcommand{\br}{\bm{r}}
\newcommand{\bs}{\bm{s}}
\DeclareMathOperator{\im}{im}
\DeclareMathOperator{\lk}{Lk}
\renewcommand\geq\geqslant
\renewcommand\leq\leqslant 
\newcommand{\St}{\mathrm{star}\,}
\theoremstyle{plain}
\newtheorem{theorem}{Theorem}[section]
\newtheorem{corollary}[theorem]{Corollary}
\newtheorem{lemma}[theorem]{Lemma}
\newtheorem{proposition}[theorem]{Proposition}
\theoremstyle{remark}
\newtheorem{remark}[theorem]{Remark}
\newtheorem{example}[theorem]{Example}
\theoremstyle{definition}
\newtheorem{definition}[theorem]{Definition}
\begin{document}
	\title{Algebraic methods for supersmooth spline spaces}
	\author[D. Toshniwal]{Deepesh Toshniwal} 
	\address{Deepesh Toshniwal\\
		Delft Institute of Applied Mathematics\\
		Delft University of Technology, the Netherlands}
	\email{d.toshniwal@tudelft.nl}
	\urladdr{\url{https://dtoshniwal.github.io}}
	
	\author[N. Villamizar]{Nelly Villamizar} 
	\address{Nelly Villamizar\\
		Department of Mathematics\\
		Swansea University, United Kingdom}
	\email{n.y.villamizar@swansea.ac.uk}
	\urladdr{\url{https://sites.google.com/site/nvillami}}
	\thanks{D. Toshniwal was supported by project number 212.150 funded through the Veni research programme by the Dutch Research Council (NWO)}
	\thanks{N. Villamizar was supported by the UK Engineering and Physical Sciences Research Council (EPSRC) New Investigator Award EP/V012835/1.}
	\begin{abstract}
		Multivariate piecewise polynomial functions (or splines) on polyhedral complexes have been extensively studied over the past decades and find applications in diverse areas of applied mathematics including numerical analysis, approximation theory, and computer aided geometric design.
		In this paper we address various challenges arising in the study of splines with enhanced mixed (super-)smoothness conditions at the vertices and across interior faces of the partition. 
		Such supersmoothness can be imposed but can also appear unexpectedly on certain splines depending on the geometry of the underlying polyhedral partition.
		Using algebraic tools, a generalization of the Billera-Schenck-Stillman complex that includes the effect of additional smoothness constraints leads to a construction which requires the analysis of ideals generated by products of powers of linear forms in several variables.
		Specializing to the case of planar triangulations, a combinatorial lower bound on the dimension of splines with supersmoothness at the vertices is presented, and we also show that this lower bound gives the exact dimension in high degree. The methods are further illustrated with several examples.
	\end{abstract}
	
	\keywords{
		Spline functions, superspline spaces on triangulations, dimension of spline spaces, supersmoothness,  intrisic supersmoothness.
	}
	\subjclass[2020]{
	 13D02, 65D07, 41A15
	}
	
	\maketitle
	\section{Introduction}\label{sec:introduction}
	A multivariate spline is a piecewise polynomial function defined on a  partition $\Delta$ of a domain in $\Omega\subseteq\R^n$ such that, as a function on $\Omega$, it is continuously differentiable up to a fixed order $r\geq 0$.
	A fairly more general definition arises when additional smoothness conditions are imposed on specific faces of the partition $\Delta$. 
	Such splines are called \emph{supersmooth splines} or \emph{supersplines}, and in this article we study them using algebraic tools.
	
	Spline spaces with supersmoothness are used for spline-based finite elements or isogeometric analysis applications \cite{hughes2005isogeometric}.
	On a general planar triangulation, the dimension of the space of $C^r$-continuous splines of polynomial degree at most $d$ may depend on the geometry of the partition for small $d$.
	This is undesirable for finite elements as it complicates, for instance, the efficient construction of locally supported basis functions.
	However, enhanced supersmoothness can be employed to eliminate this geometric-dependence and yield more tractable spline spaces; e.g., see Speleers \cite{speleers_2013} and Groselj and Speleers \cite{grovselj2021super}.
	Given this, developing an understanding or spline spaces with (enhanced) supersmoothness has both theoretical and practical relevance.
	In this article, we present an application of homological methods toward this task.
	
	Classically, splines have  been studied using Bernstein-B\'ezier representations and the construction of minimal determining sets, see \cite{lai2007spline} and the references therein.
	These methods were first applied to superspline spaces on  triangulations by Chui in \cite{chui90}, where a special order of supersmoothness $r + \lfloor (d-2r-1)/2 \rfloor$ was imposed on the vertices of the partition for $C^r$-spline spaces of degree at most $d \geq 3r+2$.
	The motivation to construct this spline space came from the construction of locally supported basis functions and optimal finite element approximation. 
	Splines with arbitrary uniform supersmoothness were introduced by Schumaker in \cite{schumaker89}; 
	and splines with varying orders of supersmoothness at the vertices by Ibrahim and Schumaker in \cite{ibrahim91}. 
	See also \cite[Chapter 5]{lai2007spline} where Bernstein-B\'ezier methods for splines on triangulation and well-known results on superspline spaces have been collected and summarized.
	Alfeld and Schumaker in \cite{alfeld03} introduced the notion of \emph{smoothness functionals} and provided lower and upper bounds for bivariate spline spaces with enhanced smoothness conditions across interior edges of the underlying triangulation.
	This led to a more general notion of supersmoothness, which can also be found in \cite[Chapter 9]{lai2007spline}. 
	
	Supersmoothness properties can be imposed but they can also appear unexpectedly on certain splines with only uniform global smoothness constraints.
	Splines with such unexpected smoothness are said to have \emph{intrinsic supersmoothness}.  
	This feature was first observed by Farin in \cite{farin80} in the case of cubic $C^1$-continuous splines on the Clough--Tocher split, which is the triangulation of a triangle with a single interior vertex and three interior edges. Farin observed that the second order derivatives of the $C^1$-splines supported in this triangulation are also continuous at the interior vertex. 
	A detailed proof of this case as well as its trivariate analog can be found in \cite{alfeld84}.
	It is now known that on a given triangulation, for certain combinations of degrees and global smoothness, the dimension of a spline space can be determined combinatorially if additional smoothness constraints on the faces of the partition are revealed and appropriate addressed.
	The latter has been studied via Bernstein--B\'ezier methods to prove results on dimension of spline spaces by Sorokina in \cite{sorokina_2010,sorokina_2013}, and Sorokina and Shektman in \cite{BorisTatyana2013}. Recently, in this direction, Floater and Hu in \cite{floater_hu_2020} determine the maximal order of intrinsic supersmoothness at vertices for various simplicial complexes with a single interior vertex.
	
	Algebraic methods developed for studying $C^r$-continuous splines \cite{billera1988homology,billeraR91,schenck1997family,Spect,schenck1997local} on polyhedral complexes were explored by Geramita and Schenck in \cite{geramita1998fat} to study spline spaces with varying order of smoothness across the codimension-1 faces of a simplicial complex in $\R^n$.
	In this approach, the connection between spline functions and fat point ideals is used to derive a dimension formula for mixed spline spaces on planar triangulations in sufficiently high polynomial degree. 
	This connection is further explored by DiPasquale in \cite{DiPasquale18} for splines on polytopal complexes, and for splines with mixed supersmoothness condition on the edges of planar quadrangular and T-meshes in \cite{mixed21,mixed20}. 
	
	The application of algebraic methods to studies of splines with mixed smoothness (i.e., with differing orders of smoothness across different codimension-1 faces of an $n$-dimensional complex) are the ones closest in spirit to the focus of this manuscript.
	We extend these algebraic methods to the setting where supersmoothness can be imposed at any arbitrary $i$-dimensional faces, $i \leq n-1$, of such a complex.
	This is a very general setting which can be used to further our understanding of both superspline and classical spline spaces.
	Indeed, the two are related by the notion of intrinsic supersmoothness, identification of which has been shown to yield a better understanding of the dimension of classical splines \cite{sorokina_2010,sorokina_2013}.
	The latter is an open problem in spline theory in general and algebraic methods have provided new results, for instance, see the recent developments in \cite{MN-PaperA,MN-PaperB,YSS20,YS19}.
	
	The paper is organized as follows. 
	In Section \ref{sec:meshes_and_splines} we set up notation, give the definition of mixed splines and superspline spaces.
	In Section \ref{sec:topology} we present the relevant homological and algebraic background to study the dimension of superspline spaces. 
	We devote Section \ref{sec:idealedges} to the study of certain ideals that that arise when considering supersmooth splines.
	We consider the special case of planar triangulations and derive a lower bound on the dimension of the superspline space in Section \ref{sec:bounds}; we prove that the lower bound coincides with the exact dimension in large degree.
	Finally, we devote Section \ref{sec:examples} to specific examples of superspline spaces that appear in the literature \cite{morgan75,chui85,speleers_2013,floater_hu_2020} before concluding.
	All examples have been computed using Macaulay2 \cite{M2}, and the code for the same can be downloaded from \url{https://github.com/dtoshniwal/M2_supersmoothness}.
	\section{Splines with mixed and supersmoothness conditions}\label{sec:meshes_and_splines}
	In this section we set notation and important definitions concerning the spline spaces that we will study in the rest of the paper.
	
	We denote by $\Delta$ a simplicial complex embedded in $\R^n$.
	If there is no confusion about the embedding, we identify $\Delta$ with its embedding and write $\Delta\subseteq \R^n$.
	If $n=2$ we refer to $\Delta$ as a triangulation, and as a tetrahedral complex if
	$n=3$.
	We write $\Delta^\circ$ and $\partial\Delta=\Delta\setminus \Delta^\circ$ for the collection of interior and boundary faces of $\Delta$, respectively.
	The set of $i$-dimensional faces of $\Delta$, also called $i$-faces, is denoted $\Delta_i$, and $\Delta_i^\circ\subseteq\Delta_i$ is the set of the interior $i$-faces, for $i=0,\dots, n-1$.
	The number of elements of $\Delta_i$ and $\Delta_i^\circ$ is denoted $f_i$ and $f_i^\circ$, respectively.
	
	Denote by $\tR = \R[x_1,\dots,x_n]$ the polynomial ring in $n$-variables, and by $\tR_{\leq d}$ the vector space of polynomials in $\tR$ of total degree at most $d$.
	If $\Delta\subseteq\R^n$ is a simplicial complex, we write $C^r(\Delta)$ for set of all functions $F\colon \Delta\rightarrow\R $ which are continuously differentiable of order $r$ on $\Delta$. We call these functions $C^r$-continuous, or $C^r$-smooth, on $\Delta$.
	
	\begin{definition}\label{def:splines}
		Let $\Delta\subseteq \R^n$ be a simplicial complex, and $0\leq r\leq d$ be integers. 
		The set $\calS_d^r(\Delta)$ of $C^r$-continuous splines on $\Delta$ is defined as the set of all piecewise polynomial functions on $\Delta$ of degree at most $d$ that are continuously differentiable up to order $r$ on $\Delta$. More precisely, 
		\[\calS^{r}_{d}(\Delta)= \bigl\{ f\in C^r(\Delta)\colon f|_{\sigma}\in\tR_{\leq d} \text{\ for all \ } \sigma\in\Delta_n\bigr\}.\] 
		If $f\in\calS^{r}_{d}(\Delta)$ we say that $f$ is a $C^r$-spline, or a $C^r$-continuous (or -smooth) spline, on $\Delta$.
		The collection of all $C^r$-splines on $\Delta$ is denoted $\calS^{r}(\Delta)=\bigcup_{d\geq 0}S_d^r(\Delta)$. 
	\end{definition}
	For a given simplicial complex $\Delta$, we extend Definition \ref{def:splines} and consider spline functions with variable smoothness conditions at the vertices or across the interior faces of $\Delta$.
	If $\beta\in\Delta_i^\circ$, let us denote by $\Delta_\beta$ the \emph{star of $\beta$ in $\Delta$}, that is the simplicial complex composed by all the simplices of $\Delta$ having $\beta$ as one of their faces.
	Following the notation in \cite{lai2007spline} and \cite{geramita1998fat}, we first define the space of splines with \emph{mixed smoothness} conditions across the interior codimension-1 faces  of $\Delta$.
	\begin{definition}[Spline functions with mixed smoothness] 
		For a simplicial complex $\Delta\subset \R^n$, and a non-negative integer $d$, let $\bm r= \{r_\beta\colon {\beta\in\Delta_{n-1}^\circ}\bigr\}$ be a set of integers $0\leq r_\beta\leq d$ associated to the codimension $1$-faces $\beta\in\Delta_{n-1}^\circ$. 
		The space $\calS^{\bm r}_{d}(\Delta)$ of splines with \emph{mixed smoothness} $\bm r$ on $\Delta$ is defined as the set of all $C^0$-continuous functions on $\Delta$ which are splines with smoothness $r_\beta$ across the face $\beta$ for each $\beta\in\Delta_{n-1}^\circ$. Namely,
		\begin{equation*}
			\calS^{\bm r}_{d}(\Delta) = 
			\bigl\{f\in C^0(\Delta) \colon f|_{\Delta_\beta}\in \calS^{r_\beta}_d(\Delta_\beta) \text{\ for all \ } \beta \in \Delta_{n-1}^\circ\bigr\},
		\end{equation*}	
		where $\Delta_\beta$ is the star of the face $\beta$ in $\Delta$.
		Similarly as before, we denote $\calS^{\bm r}(\Delta)=\bigcup_{d\geq 0}S_d^{\bm r}(\Delta)$. 
		If $r_\beta=r\in\Z_{\geq 0}$ for all $\beta\in\Delta_{n-1}^\circ$, then $\calS^{\bm r}_{d}(\Delta)$ coincides with $\calS_d^r(\Delta)$ in Definition \ref{def:splines}. 
		In this case we write $\calS^{\bm r}_{d}(\Delta)=\calS^{r}_{d}(\Delta)$. 
	\end{definition}
	We now define spline functions with variable order of smoothness at the $i$-faces $\Delta$ for $i=0,\dots, n-2$. 
	We follow the notation in \cite{lai2007spline} for planar $\Delta$ and call the sets of these functions \emph{superspline spaces}. 
	We say that a spline $f\in \calS^{0}_{d}(\Delta)$ is $C^s$-continuous at a face $\beta\in\Delta_i$ provided that, for all $\sigma\in\Delta_n$ such that $\beta$ is a face of $\sigma$, all polynomials $f|_\sigma$ have common derivatives up to order $s$ on $\beta\subseteq\R^n$.
	In this case we say that $f$ have \emph{supersmoothness} $s$ at $\beta$ and write $f|_{\Delta_{\beta}}\in C^s(\beta)$, or simply  $f\in C^s(\beta)$.
	\begin{definition}[Superspline functions]\label{def:supersplines}
		Suppose $\Delta\subseteq \R^n$ is a simplicial complex, and $r_\tau$ and $d$ are integers such that $0\leq r_\tau\leq d$ for each $\tau\in\Delta_{n-1}^\circ$.
		For a fixed $0\leq i\leq n-2$, let $\bm s=\bigl\{s_\beta\colon \beta\in\Delta_{i}\bigr\}$ be a sequence of integers $s_\beta$ with $0\leq s_\beta\leq d$. 
		The \emph{superspline space}  $\calS^{\br,\bs}_{d}(\Delta)$ is defined as the set of all $C^r$-continuous splines on $\Delta$ with supersmoothness $s_\beta$ at $\beta$ for each face $\beta\in\Delta_{i}$ i.e., 
		\begin{equation*}
			\calS^{\br, \bs}_{d}(\Delta) = 
			\bigl\{f\in \calS^{\br}_{d}(\Delta) \colon f\in C^{s_\beta}(\beta) \text{\ for all \ } \beta \in \Delta_{i}\bigr\}.
		\end{equation*}	
		We denote $\calS^{\br,\bm s}(\Delta)=\bigcup_{d\geq 0}S_d^{\br,\bs}(\Delta)$. 
		If $s_\beta=s\in\Z_{\geq 0}$ for all $\beta\in\Delta_{i}$, we write $\calS^{\br,\bs}_{d}(\Delta)= \calS^{\br, s}_{d}(\Delta)$; if $r_\tau=r\in\Z_{\geq 0}$ for all $\tau\in\Delta_{n-1}^\circ$ we simply write  $\calS^{r,\bs}_{d}(\Delta)$; in the case $s=r$ then $\calS^{\br,\bs}_{d}(\Delta)=\calS^{r}_{d}(\Delta)$.
	\end{definition} 
	\begin{remark}
		Notice that if  $\gamma\in\Delta_i$ for  $0\leq i\leq n-2$ is a face of $\beta\in\Delta_{n-1}$ and $f\in \calS^{r}_{d}(\Delta)$, then $f\in C^s(\gamma)$ does not necessarily imply  $f|_{\Delta_{\beta}}\in \calS^{s}_d(\Delta_\beta)$. 
		Conversely, if $f|_{\Delta_{\beta}}\in \calS^{s}_d(\Delta_\beta) $ holds for all $(n-1)$-face $\beta\in \Delta_\gamma$ then  $f\in C^s(\gamma)$ for each face $\gamma\subseteq \beta$.
	\end{remark}
	If we fix an index $0\leq i\leq n-2$, and 
	consider only superspline functions that posses the same order of enhanced smoothness at the $i$-faces of the simplicial complex. In this setting, in the case $n=2$, the only superspline space will be that of splines with uniform supersmoothness at each vertex of the triangulation. In the case $n=3$, we can consider two superpline spaces, one composed of splines with supersmoothness across the edges and the other of splines with enhanced supesmothness at the vertices of the given tetrahedral partition. 
	\section{Supersplines as the homology of a chain complex}\label{sec:topology}
	In this section we review the necessary results from \cite{billeraR91,billera1988homology,geramita1998fat,schenck1997local}, and extend these results to the setting of superspline spaces $\calS^{\br,\bs}(\Delta)$ introduced in Section \ref{sec:meshes_and_splines}. 
	
	First we recall that for any pair of integers $r,d\geq 0$, the study of the splines $\calS^{r}_d(\Delta)$ on $\Delta$ of degree at most $d$ and global smoothness $r$ can be reduced to the study of splines on a simplicial complex  whose polynomial pieces are homogeneous polynomials of degree $d$.
	
	In fact, if $\Delta\subseteq \R^n$ is the star of a vertex (i.e., if all simplices in $\Delta$ share a common vertex), then 
	\begin{equation}\label{eq:starvertexhom}
		\calS^r(\Delta)\cong \bigoplus\limits_{i\ge 0} \calS^r(\Delta)_i, \mbox{ and}\quad \calS^r_d(\Delta)\cong \bigoplus\limits_{i=0}^d \calS^r(\Delta)_i,
	\end{equation}
	where $\calS^r(\Delta)_i$ denotes the splines on $\Delta$ of degree exactly $i$, and the isomorphism is as $\R$-vector spaces.  
	
	If $\Delta\subseteq$ is not the star of a vertex, then the isomorphism \eqref{eq:starvertexhom} does not hold for $\calS^r(\Delta)$, but 
	one can associate to $\Delta$ a star of a vertex $\hat{\Delta}\subseteq \R^{n+1}$ and~\eqref{eq:starvertexhom} will still be valid.
	This new complex $\hat{\Delta}$ can be constructed as follows. If $x_1,\dots,x_n$ are the coordinates of $\R^n$, consider the embedding $\phi\colon\R^n\rightarrow \R^{n+1}$ in the hyperplane $\{x_{0}=1\}\subseteq \R^{n+1}$ given by  $\phi(x_1,\dots, x_n)=(1,x_1,\dots, x_n)$.
	If $\sigma$ is a simplex in $\R^n$, the \textit{cone over} $\sigma$, denoted $\hat{\sigma}$, is the simplex in $\R^{n+1}$ which is the convex hull of the origin in $\R^{n+1}$ and $\phi(\sigma)$.  
	If $\Delta\subseteq\R^n$ is a simplicial complex, the \textit{cone over} $\Delta$, denoted $\hat \Delta$, is the simplicial complex consisting of the simplices $\bigl\{\hat{\beta}:\beta\in\Delta\bigr\}$ along with the origin in $\R^{n}$. 
	Then, by construction, $\hat\Delta\subseteq\R^{n+1}$ is the star of the origin and ~\eqref{eq:starvertexhom} yields
	$
	\calS^r(\hat\Delta)\cong \bigoplus\limits_{i\ge 0} \calS^r(\hat \Delta)_i\mbox{ and } \calS^r_d(\hat\Delta)\cong \bigoplus\limits_{i=0}^d \calS^r(\hat \Delta)_i.
	$
	The following result from Billera and Rose \cite{billeraR91} links these two spline spaces.
	\begin{theorem}{\normalfont \cite[Theorem~2.6]{billeraR91}}\label{thm:Homogenize}
		If $\Delta\subseteq \R^n$ is a simplicial complex and $\hat \Delta$ is the cone over $\Delta$ in $\R^{n+1}$ then	$\calS^r_d(\Delta)\cong \calS^r(\hat\Delta)_d$.
	\end{theorem}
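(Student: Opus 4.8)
The plan is to construct the isomorphism explicitly by homogenization with respect to the cone variable $x_0$, so that the coordinate ring of $\R^{n+1}$ is $\R[x_0,x_1,\dots,x_n]$. For $g\in\tR_{\leq d}$ define its degree-$d$ homogenization $g^{(d)} := x_0^{d}\,g(x_1/x_0,\dots,x_n/x_0)$, a homogeneous polynomial of degree exactly $d$, and recover $g$ by setting $x_0=1$. Writing $g=\sum_{|\alpha|\leq d}c_\alpha x^\alpha$ gives $g^{(d)}=\sum_{|\alpha|\leq d}c_\alpha x_0^{\,d-|\alpha|}x^\alpha$, which exhibits $g\mapsto g^{(d)}$ as a linear isomorphism from $\tR_{\leq d}$ onto the space of homogeneous degree-$d$ forms, with dehomogenization as inverse. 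The first step is to promote this to piecewise functions: a function $f\in\calS^0_d(\Delta)$ with pieces $f_\sigma=f|_\sigma$ ($\sigma\in\Delta_n$) is sent to the piecewise homogeneous function $F$ on $\hat\Delta$ whose piece on the cone $\hat\sigma$ is $(f_\sigma)^{(d)}$. Since each maximal cone $\hat\sigma$ is full-dimensional in $\R^{n+1}$, a homogeneous polynomial is determined by its values on $\hat\sigma$, so $F$ is well defined and $f\mapsto F$ is a linear bijection onto piecewise-degree-$d$ functions on $\hat\Delta$.

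It remains to check that this bijection matches $C^r$-smoothness on both sides. Here I would invoke the standard algebraic criterion for $C^r$-continuity \cite{billeraR91}: a piecewise polynomial is $C^r$ exactly when, for every interior codimension-$1$ face $\tau$ shared by adjacent top simplices $\sigma,\sigma'$, the difference of the two pieces is divisible by $\ell_\tau^{\,r+1}$, where $\ell_\tau$ is an affine linear form vanishing on the hyperplane spanned by $\tau$. The second step is to track the correspondence of faces and forms under coning: the interior codimension-$1$ faces of $\hat\Delta$ are precisely the cones $\hat\tau$ over interior codimension-$1$ faces $\tau$ of $\Delta$, with $\hat\sigma,\hat\sigma'$ adjacent across $\hat\tau$ if and only if $\sigma,\sigma'$ are adjacent across $\tau$; and the hyperplane through the origin spanned by $\hat\tau$ is cut out by the homogeneous linear form $L_\tau:=(\ell_\tau)^{(1)}$.

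The third step is the divisibility transfer, which is the crux. Because $g\mapsto g^{(d)}$ is linear, the difference of homogenized pieces is the homogenization of the difference: $F|_{\hat\sigma}-F|_{\hat\sigma'}=(f_\sigma-f_{\sigma'})^{(d)}$. A short degree count then shows homogenization converts divisibility by $\ell_\tau^{\,r+1}$ into divisibility by $L_\tau^{\,r+1}$: using the multiplicativity $(g_1g_2)^{(d_1+d_2)}=g_1^{(d_1)}g_2^{(d_2)}$, if $f_\sigma-f_{\sigma'}=\ell_\tau^{\,r+1}h$ with $\deg h\leq d-r-1$, then $(f_\sigma-f_{\sigma'})^{(d)}=L_\tau^{\,r+1}\,h^{(d-r-1)}$, the powers of $x_0$ introduced in the two factors summing to $(r+1)+(d-r-1)=d$. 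Dehomogenizing ($x_0=1$) reverses this and yields the converse. Hence $\ell_\tau^{\,r+1}\mid(f_\sigma-f_{\sigma'})$ if and only if $L_\tau^{\,r+1}\mid(F|_{\hat\sigma}-F|_{\hat\sigma'})$, so $f$ is $C^r$ on $\Delta$ exactly when $F$ is $C^r$ on $\hat\Delta$. Restricting the vector-space isomorphism then gives $\calS^r_d(\Delta)\cong\calS^r(\hat\Delta)_d$.

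I expect the main obstacle to be the bookkeeping in the third step, namely matching degrees so that the homogenization of a product factors as the product of the separate homogenizations with no spurious extra power of $x_0$. The face-and-form correspondence of the second step is routine once the facet structure of a cone over a simplex is unwound, but one must confirm that the only interior codimension-$1$ faces of $\hat\Delta$ are the coned faces $\hat\tau$, the top faces $\phi(\sigma)\subseteq\{x_0=1\}$ being boundary faces across which no smoothness condition is imposed.
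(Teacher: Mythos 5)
Your proposal is correct and follows essentially the same route as the paper: the paper (in its proof of the superspline generalization, Proposition \ref{thm:spline_space_isomorphism}, which mirrors Billera--Rose) defines the linear (de)homogenization map piece-by-piece on the cones $\hat\sigma$ and then invokes the algebraic smoothness criterion on both $\Delta$ and $\hat\Delta$ to conclude it restricts to an isomorphism of the $C^r$ subspaces. Your version just runs the map in the homogenization direction and writes out the divisibility transfer $\ell_\tau^{r+1}\mid(f_\sigma-f_{\sigma'})\iff L_\tau^{r+1}\mid(F|_{\hat\sigma}-F|_{\hat\sigma'})$ explicitly, which is exactly the content the paper delegates to the cited criterion.
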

	In the following we extend Theorem \ref{thm:Homogenize} to the superspline functions introduced in Definition \ref{def:supersplines}. 
	
	\subsection{Superspline ideals}
	Suppose $\Delta\subseteq\R^n$ is an $n$-dimensional simplicial complex.
	As defined above, let $\hat \Delta\subseteq \R^{n+1}$ be the cone over $\Delta$, and denote by $\tS=\R[x_0,x_1,\dots,x_n]$ the polynomial ring associated to $\hat\Delta$.
	The homogenization of a polynomial $f\in\tR$ in $\tS$ is denoted $\hat{f}$.
	Conversely, if $f \in\tS$, its dehomogenized taking $x_0=1$ is denoted $\check{f} \in \tR$.
	
	For homogeneous polynomials $f_1,\dots,f_k\in \tS$, we denote by $\langle f_i\rangle\subseteq \tS$ the ideal generated by $f_i$ and $\langle f_1,\dots, f_k\rangle=\sum_{i=1}^{k}\langle f_i\rangle$ the ideal of $\tS$ generated by $f_1,\dots, f_k$. 
	We write $\V(f_1,\dots, f_k)\subseteq \R^{n+1}$ for the set of points $\bm p\in\R^{n+1}$ such that $f_i(\bm p)=0$ for all $i=1,\dots, k$.
	Similarly, we define $\V(\check f_1,\dots, \check f_k)\subseteq \R^n$ for $\check f_i\in\tR$.
	
	Fix $0\leq i\leq n-2$, and take two sets of integers $\bm r= \{r_\tau\colon \tau\in\Delta_{n-1}^\circ\bigr\}$ and $\bm s=\bigl\{s_\beta\colon \beta\in\Delta_{i}\bigr\}$ such that $r_\tau, s_\beta\geq 0$ for each $\tau\in\Delta_{n-1}^\circ$ and $\beta\in\Delta_i$. 
	To each face of $\Delta$ we associate an (homogeneous) ideal in $\tS$ as follows.
	\begin{itemize}
		\renewcommand{\labelitemi}{\scalebox{0.5}{$\blacksquare$}}	
		\item If $\sigma\in\Delta_n$ define $\tJ(\sigma)=0$.
		\item If $\tau\in\Delta_{n-1}^\circ$, let $\ell_\tau\in\tS$ be (a choice of) a linear form vanishing on $\hat\tau$.
		For each $i$-face $\beta\subset\tau$ let $\hat\fm_{\beta}=\bigl\{\hat f\in \tS\colon f\in\fm_{\beta}\bigr\}$
		where $\fm_\beta\subseteq\tR$ is the ideal of all polynomials vanishing at $\beta$.
		We define 
		\begin{equation}\label{eq:edgesgen}
			\tJ(\tau)=\bigl\langle \ell_\tau^{r_\tau+1}\rangle\bigcap\bigl\{\hat\fm_{\beta}^{s_\beta+1}\colon \beta\in\Delta_i\, , \beta\subset\tau\bigr\}\,.
		\end{equation}
		\item If $\gamma\in\Delta_{j}$ for $0\leq j\leq n-2$, take 
		\begin{equation}\label{eq:ifacesgen}
			\textstyle{	\tJ(\gamma)=\sum_{\tau\ni\gamma,\;\tau\in\Delta_{n-1}^\circ} \tJ(\tau)\,.}
		\end{equation}
	\end{itemize}
	Additionally, we denote by $\check\tJ(\tau)$ the ideal in $\tR$ corresponding to the edge $\tau\in\Delta_1^\circ$, namely
	\begin{equation}\label{eq:affedgegen}
		\check{\tJ}(\tau)=\bigl\langle\check\ell_\tau^{r_\tau+1}\bigr\rangle\bigcap\bigl\{\fm_{\beta}^{s_\beta+1}\colon  \,\beta\in\Delta_{i},\; \beta\subset\tau\bigr\}, 
	\end{equation}
	where $\check{\ell}_{{\tau}}\in\tR$ is a linear polynomial vanishing at $\tau$, and the ideal $\fm_\beta\subseteq\tR$ is the ideal of all polynomials vanishing at $\beta$. 
	The ideal $\tJ(\tau)$ can be defined as the homogenization of $\check\tJ(\tau)$ in $\tS$.
	
	Note that, if  $r=s$, the ideal $\tJ(\tau)$ associated to $\tau\in\Delta_{n-1}^\circ$ reduces to $\tJ(\tau)=\langle \ell_{\tau}^{r+1}\rangle$, and we recover the ideals defined by Schenck and Stillman in \cite{schenck1997local}.
	
	\begin{remark}[$\Delta\subseteq \R^2$]
		If $n=2$, for simplicity we write $\tR=\R[x,y]$ and $\tS=\R[x,y,z]$ where $x,y$ are the coordinates of $\R^2$ (and hence of the simplicial complex $\Delta$). 
		Suppose $r_\tau=r$ for all edges $\tau$ and and $s_\gamma=s$ for every vertex $\gamma$ of $\Delta$. 
		The ideal $\tJ(\tau)$ for an edge $\tau\in\Delta_1$ defined in Equation \eqref{eq:edgesgen} can be rewritten as
		\begin{equation}\label{eq:edges}
			\tJ(\tau)=\bigl\langle\ell^{r+1}_\tau \bigr\rangle\cap \hat\fm^{s+1}_\gamma\cap\hat{\fm}_{\gamma'}^{s+1}=\bigl\langle \ell_\tau^{s+1-i}\ell_{\tau,\gamma}^i\ell_{\tau,\gamma'}^ i\colon 0\leq i\leq s-r\bigr\rangle\,,
		\end{equation}	
		where $\tau=[\gamma,\gamma']$ is the edge with vertices $\gamma$ and $\gamma'$; the linear form $\ell_\tau\in\tS$ vanishes on $\hat\tau$, and $\ell_{\tau,\gamma}$ and $\ell_{\tau,\gamma'}$ are (a choice of) linear forms in $\tS$ such that $\V(\ell_\tau,\ell_{\tau,\gamma})$ and $\V(\ell_\tau,\ell_{\tau,\gamma'})$ are the lines containing the faces $\hat\gamma$ and $\hat\gamma'$ of $\hat{\Delta}$, respectively.
		
		If $\check{\ell}_{\tau}$ and $\check{\ell}_{\tau,\gamma}$ are the  dehomogenizations by taking $z=1$ of the forms ${\ell}_{\tau}$ and ${\ell}_{\tau,\gamma}$ then $\{\gamma\}=\V(\check{\ell}_{\tau},\check{\ell}_{\tau,\gamma})$, where the latter denotes the affine variety of the two linear polynomials in $\tR$ vanishing at $\gamma$. In this case, Equation \eqref{eq:affedgegen} reduces to
		\begin{equation*}
			\check\tJ(\tau)=\bigl\langle \check\ell_\tau^{r+1}\bigl\rangle\cap\fm_\gamma^{s+1}\cap \fm_{\gamma'}^{s+1},
		\end{equation*} where $\check\ell_{\tau}$ is a linear polynomial vanishing at the edge $\tau$, and $\fm_\gamma$ and $\fm_{\gamma'}$ are the (maximal) ideals in $\tR$ of all polynomials vanishing at the points $\gamma$ and $\gamma'$, respectively. 
		
		Notice that, the ideal $\tJ(\tau)$ as defined in Equation \eqref{eq:edges} is in fact independent of the choice of the linear forms ${\ell}_{\tau,\gamma}$ and ${\ell}_{\tau,\gamma'}$.
		Namely, we can choose $\ell_\tau$ and ${\ell}_{\tau,\gamma}$ as the generators of $\hat\fm_\gamma=\bigl\langle \ell_{\tau},\ell_{\tau,\gamma}\bigr\rangle$, and similarly $\hat\fm_{\gamma'} = \bigl\langle\ell_{\tau},\ell_{\tau,\gamma'}\bigr\rangle$. 
		If $\ell_{\tau}$, $\ell_{\tau,\gamma}$ and $\ell$ are three distinct linear forms vanishing at $\hat{\gamma}$, then it is easy to see that $\ell$ can be written as a linear combination $\ell=a\ell_{\tau}+b\ell_{\tau,\gamma}$, for $a,b\in\R$. 
		A generator of the ideal $\hat\fm_\gamma^k=\bigl\langle \ell_{\tau},\ell_{\tau,\gamma}\bigr\rangle^k$, for some $k\geq 1$, has the form $\ell_{\tau}^i\ell_{\tau,\gamma}^j$, with $i+j=k$, and $\ell_{\tau}^i\ell^j=\ell_{\tau}^i(a\ell_\tau+b\ell_{\tau,\gamma})^j$, which is clearly an element of $\hat\fm_\gamma^k$. Hence $\langle\ell_{\tau},\ell\bigr\rangle^k\subseteq\hat\fm_\gamma^k$, and the converse trivially follows writing $\ell_{\tau,\gamma}$ in terms of $\ell_\tau$ and $\ell$. A similar argument shows the corresponding statement for the ideal $\hat\fm_{\gamma'}$.
	\end{remark}
	
	\subsection{A chain complex of supersplines}
	Recall that a simplicial complex $\Delta\subseteq \R^n$ is \emph{pure} if all its maximal faces (with respect to inclusion) are of dimension $n$; and it is \emph{hereditary} if for all pair of faces $\sigma,\sigma'\in\Delta_n$ such that $\sigma\cap\sigma'=\beta\in\Delta_i$ there is a sequence of $n$-faces $\sigma=\sigma_0,\sigma_1,\dots,\sigma_{m-1},\sigma_m=\sigma'$such that $\beta\in\sigma_i$ for all $i$ and  $\sigma_{i-1}\cap\sigma_{i}\in\Delta_{n-1}^\circ$ for each $i=1, \dots, m$. 
	
	If $\Delta$ is a \emph{pure} and \emph{hereditary} $n$-dimensional simplicial complex, Billera proved in \cite{billera1988homology} the following algebraic criterion for a piecewise polynomial function on a simplicial complex $\Delta\subseteq \R^n$ to be $C^r$-continuous on $\Delta$.
	
	\begin{theorem}{\normalfont \cite[Theorem 2.4]{billera1988homology}}\label{thm:algcriterion}
		Suppose $\Delta\subseteq \R^n$ is a pure and hereditary simplicial complex and $r\geq 0$ is an integer. Then $f\in\calS^r(\Delta)$ if and only if $\hat f|_{\hat\sigma}-\hat f|_{\hat\sigma'}\in \tJ(\tau)$ or, equivalently, if and only if $f|_{\sigma}-f|_{\sigma'}\in \langle \check\ell_\tau^{r+1}\rangle$, for every pair $\sigma,\sigma'\in\Delta_n$ satisfying $\sigma\cap\sigma'=\tau\in\Delta_{n-1}^\circ$.
	\end{theorem}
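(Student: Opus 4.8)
The plan is to reduce the statement to two ingredients: a local (codimension-one) gluing lemma for polynomials, and a propagation argument that uses purity and heredity to promote codimension-one matching to global $C^r$-smoothness. I will prove the affine form of the criterion, $f|_\sigma-f|_{\sigma'}\in\langle\check\ell_\tau^{r+1}\rangle$; the homogeneous form is then equivalent. Indeed, since the pieces $\hat f|_{\hat\sigma}$ and $\hat f|_{\hat\sigma'}$ are homogenizations to a common degree, their difference is the homogenization of $f|_\sigma-f|_{\sigma'}$; the form $\ell_\tau^{r+1}$ is the homogenization of $\check\ell_\tau^{r+1}$; and in the uniform case considered here one has $\tJ(\tau)=\langle\ell_\tau^{r+1}\rangle$. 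As homogenization and dehomogenization are mutually inverse and preserve divisibility by these forms, the two divisibility conditions agree.

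First I would establish the \emph{local lemma}: for polynomials $g,h\in\tR$ and a nonzero affine linear form $\check\ell$, the function equal to $g$ on one side of the hyperplane $\V(\check\ell)$ and to $h$ on the other is $C^r$ across $\V(\check\ell)$ if and only if $\check\ell^{r+1}\mid(g-h)$. After an affine change of coordinates sending $\check\ell$ to a single variable $x_n$, one writes $g-h=\sum_{k\ge0}x_n^k q_k$; the normal derivatives $\partial_{x_n}^{\,j}(g-h)$ for $j\le r$ vanish on $\{x_n=0\}$ precisely when $q_0=\cdots=q_r=0$, i.e. $x_n^{r+1}\mid(g-h)$, and tangential derivatives of these vanishing traces vanish automatically. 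Since $\tau=\sigma\cap\sigma'$ has nonempty relative interior inside $\V(\check\ell_\tau)$, matching to order $r$ along $\tau$ is equivalent to matching along the whole hyperplane, so the lemma applies verbatim.

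The forward implication is then immediate: if $f\in\calS^r(\Delta)$ it is in particular $C^r$ across each interior codimension-one face $\tau=\sigma\cap\sigma'$, and the local lemma gives $f|_\sigma-f|_{\sigma'}\in\langle\check\ell_\tau^{r+1}\rangle$. For the converse I would show that, for each multi-index $\alpha$ with $|\alpha|\le r$, the piecewise polynomial $D^\alpha f$ defined on each $\sigma\in\Delta_n$ by $D^\alpha(f|_\sigma)$ glues to a continuous function on $\Delta$, and that this gluing is the genuine classical derivative. The key computation is the Leibniz estimate: differentiating $f|_\sigma-f|_{\sigma'}=g\,\check\ell_\tau^{r+1}$ by $D^\alpha$ with $|\alpha|\le r$ produces a multiple of $\check\ell_\tau^{\,r+1-|\alpha|}$ with $r+1-|\alpha|\ge1$, hence a polynomial vanishing on $\tau$; therefore $D^\alpha(f|_\sigma)$ and $D^\alpha(f|_{\sigma'})$ agree on every interior codimension-one face.

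The \emph{main obstacle}, and the place where purity and heredity enter, is continuity of $D^\alpha f$ across a face $\beta\in\Delta_j$ of codimension at least two. Given $n$-faces $\sigma,\sigma'$ containing $\beta$, heredity supplies a chain $\sigma=\sigma_0,\dots,\sigma_m=\sigma'$ of $n$-faces through $\beta$ with each $\sigma_{k-1}\cap\sigma_k=\tau_k\in\Delta_{n-1}^\circ$ and $\beta\subset\tau_k$; since $\check\ell_{\tau_k}$ vanishes on $\tau_k\supseteq\beta$, the previous step forces $D^\alpha(f|_{\sigma_{k-1}})$ and $D^\alpha(f|_{\sigma_k})$ to agree on $\beta$, and telescoping along the chain yields agreement of $D^\alpha(f|_\sigma)$ and $D^\alpha(f|_{\sigma'})$ on all of $\beta$. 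Purity ensures every maximal face is $n$-dimensional, so this covers all pieces meeting at $\beta$. Having verified that the pieces of $D^\alpha f$ agree on every pairwise intersection, I would invoke the standard gluing argument (Taylor's theorem together with the fundamental theorem of calculus on the convex cells) to conclude that the continuous gluing of $D^\alpha f$ is the actual $\alpha$-th derivative of $f$, whence $f\in C^r(\Delta)$. I expect the delicate point to be exactly this last transition from matching of derivatives on shared faces to genuine global $C^r$-regularity, which is where the hypotheses that $\Delta$ be pure and hereditary are indispensable.
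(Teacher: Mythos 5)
The paper does not actually prove Theorem \ref{thm:algcriterion}: it imports the statement from Billera \cite{billera1988homology} (with earlier planar proofs attributed to Wang \cite{wang75} and Chui \cite{chui88}), and then uses it as a black box in the converse direction of the proof of its generalization, Theorem \ref{prop:newalgcriteriongen}. So there is no internal proof to compare against; judged on its own, your argument is correct in outline and is essentially the classical one: the two-cell lemma via Taylor expansion in the normal coordinate together with Zariski density of the relative interior of $\tau$ in its hyperplane, the Leibniz step giving agreement of $D^\alpha(f|_\sigma)$ and $D^\alpha(f|_{\sigma'})$ on $\tau$ for $|\alpha|\leq r$, and heredity chains to propagate agreement of derivatives to faces of codimension at least two. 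That last mechanism is precisely what the paper does spell out, in the proof of Theorem \ref{prop:newalgcriteriongen}, so your proposal in effect supplies the argument that the paper outsources, and shows it is no harder than its supersmooth generalization.

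Two caveats. First, a small imprecision: $\hat f|_{\hat\sigma}-\hat f|_{\hat\sigma'}$ is in general $x_0^k$ times the homogenization of $f|_\sigma-f|_{\sigma'}$, since leading terms of the two pieces can cancel; because $\ell_\tau$ is not proportional to $x_0$, divisibility by $\ell_\tau^{r+1}$ is unaffected, but the equivalence should be routed through that extra factor rather than asserted as an identity of homogenizations. Second, the step you yourself flag as delicate --- passing from agreement of the piecewise derivatives on all shared faces to genuine $C^r$ regularity of $f$ as a function on $\Delta$ --- is only gestured at. It does go through: induct on $|\alpha|$, and for differentiability at a point $p$ use that the star of $p$ is star-shaped with respect to $p$, so for $q$ sufficiently close to $p$ the segment $[p,q]$ lies in a single simplex, and the fundamental theorem of calculus along that segment plus continuity of the glued gradient gives differentiability at $p$ with the expected derivative. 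It is worth noting why the paper can avoid this step entirely: its notion of supersmoothness at a face $\beta$ asks only that the polynomial pieces have matching derivatives \emph{on} $\beta$, so the proof of Theorem \ref{prop:newalgcriteriongen} needs nothing beyond pointwise agreement, whereas membership in $\calS^r(\Delta)$ requires classical smoothness on a neighborhood --- and that gluing content is exactly what the citation to Billera carries.
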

	\begin{remark}
		In the case $\Delta\subseteq\R^2$, Wang in \cite{wang75} and Chui in \cite[Theorem 4.2]{chui88} provided earlier proofs of Theorem \ref{thm:algcriterion}.
	\end{remark}
	In the following, we prove an analogous criterion to Theorem \ref{thm:algcriterion} for splines with smoothness $\br$ across the codimension-$1$ faces and supersmoothness $\bs$ at all the $i$-faces of the partition.  
	\begin{theorem}\label{prop:newalgcriteriongen}
		Suppose $\Delta\subseteq \R^n$ is a pure and hereditary simplicial complex and $\calS^{\br,\bs}(\Delta)$ denotes the set of splines with smoothness  $\br=\{r_\tau\colon\tau\in\Delta_{n-1}^\circ\}$ at the codimension 1-faces and supersmoothness $\bs=\{s_\beta\colon \beta\in\Delta_i\}$ across all the $i$-faces of $\Delta$, for a fixed $0\leq i\leq n-2$.
		Then $f\in\calS^{\br,\bs}(\Delta)$ if and only if  $\hat{f}|_{\hat{\sigma}}-\hat{f}|_{\hat{\sigma}'}\in\tJ(\tau)$ or, equivalently, if and only if $f|_{\sigma}-f|_{\sigma'}\in \check\tJ(\tau)$, for all $\tau\in\Delta_{n-1}^\circ$ and $\sigma,\sigma'\in\Delta_n$ satisfying $\sigma\cap\sigma'=\tau$. 
	\end{theorem}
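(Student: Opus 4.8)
The plan is to reduce the homogeneous statement to the affine one and then characterise membership in the ideal $\check\tJ(\tau)$ directly. Since $\tJ(\tau)$ is the homogenization of $\check\tJ(\tau)$ in $\tS$ (as noted after \eqref{eq:affedgegen}) and $\hat f|_{\hat\sigma}$ is the homogenization of $f|_\sigma$ to a common degree, the equivalence $\hat f|_{\hat\sigma}-\hat f|_{\hat\sigma'}\in\tJ(\tau)$ with $f|_\sigma-f|_{\sigma'}\in\check\tJ(\tau)$ follows verbatim as in Billera's criterion (Theorem \ref{thm:algcriterion}), using that dehomogenization is a ring map carrying $\tJ(\tau)$ into $\check\tJ(\tau)$ and that $\check{(\hat g)}=g$. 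Thus it suffices to show that $f\in\calS^{\br,\bs}(\Delta)$ if and only if $f|_\sigma-f|_{\sigma'}\in\check\tJ(\tau)$ for every interior $\tau=\sigma\cap\sigma'\in\Delta_{n-1}^\circ$. By \eqref{eq:affedgegen} this membership is equivalent to the conjunction of $f|_\sigma-f|_{\sigma'}\in\langle\check\ell_\tau^{r_\tau+1}\rangle$ together with $f|_\sigma-f|_{\sigma'}\in\fm_\beta^{s_\beta+1}$ for each $i$-face $\beta\subset\tau$.

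The one auxiliary fact I would isolate first is a local description of supersmoothness: for a fixed $\beta\in\Delta_i$, a $C^0$ spline $f$ satisfies $f\in C^{s_\beta}(\beta)$ if and only if $f|_\sigma-f|_{\sigma'}\in\fm_\beta^{s_\beta+1}$ for every pair $\sigma,\sigma'\in\Delta_n$ containing $\beta$. This is proved by choosing affine coordinates in which $\beta$ is the coordinate subspace $\{x_{i+1}=\cdots=x_n=0\}$, so that $\fm_\beta=\langle x_{i+1},\dots,x_n\rangle$; a polynomial and all its partial derivatives up to order $s_\beta$ vanish along $\beta$ exactly when, in its Taylor expansion in the normal variables, all coefficients of monomials of normal degree $\leq s_\beta$ vanish, i.e.\ exactly when it lies in $\fm_\beta^{s_\beta+1}$. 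Applying this to $f|_\sigma-f|_{\sigma'}$ turns the derivative-matching definition of supersmoothness into ideal membership.

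For the forward implication, suppose $f\in\calS^{\br,\bs}(\Delta)$ and fix adjacent $\sigma,\sigma'$ with $\sigma\cap\sigma'=\tau\in\Delta_{n-1}^\circ$. The mixed smoothness condition on the star $\Delta_\tau$ (two $n$-faces meeting across the hyperplane $\V(\check\ell_\tau)$) gives $f|_\sigma-f|_{\sigma'}\in\langle\check\ell_\tau^{r_\tau+1}\rangle$ by the hyperplane-crossing case of Theorem \ref{thm:algcriterion}. Moreover, every $i$-face $\beta\subset\tau$ is contained in both $\sigma$ and $\sigma'$, so the supersmoothness $f\in C^{s_\beta}(\beta)$ together with the auxiliary fact yields $f|_\sigma-f|_{\sigma'}\in\fm_\beta^{s_\beta+1}$. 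Intersecting these memberships gives $f|_\sigma-f|_{\sigma'}\in\check\tJ(\tau)$.

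The converse is where the real work lies. From $f|_\sigma-f|_{\sigma'}\in\check\tJ(\tau)\subseteq\langle\check\ell_\tau^{r_\tau+1}\rangle$ for all interior $\tau$, Theorem \ref{thm:algcriterion} (applied star by star) immediately gives $f\in\calS^{\br}(\Delta)$. The difficulty is establishing the supersmoothness $f\in C^{s_\beta}(\beta)$ at a fixed $\beta\in\Delta_i$: the difference conditions are only available across codimension-$1$ faces, whereas $\beta$ may lie in many $n$-faces whose pairwise intersections are not codimension $1$. This is precisely where I would invoke the hereditary hypothesis. Given $\sigma,\sigma'\in\Delta_n$ with $\beta\subseteq\sigma\cap\sigma'$, set $\beta'=\sigma\cap\sigma'\in\Delta_j$ (so $\beta\subseteq\beta'$) and apply hereditariness to $\beta'$ to obtain a chain $\sigma=\sigma_0,\sigma_1,\dots,\sigma_m=\sigma'$ of $n$-faces with $\beta'\subseteq\sigma_k$ for all $k$ and $\tau_k:=\sigma_{k-1}\cap\sigma_k\in\Delta_{n-1}^\circ$. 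Since $\beta\subseteq\beta'\subseteq\tau_k$ for every $k$, the defining intersection \eqref{eq:affedgegen} shows $\check\tJ(\tau_k)\subseteq\fm_\beta^{s_\beta+1}$, so each consecutive difference $f|_{\sigma_{k-1}}-f|_{\sigma_k}$ lies in the ideal $\fm_\beta^{s_\beta+1}$. Telescoping, $f|_\sigma-f|_{\sigma'}=\sum_{k=1}^m\bigl(f|_{\sigma_{k-1}}-f|_{\sigma_k}\bigr)\in\fm_\beta^{s_\beta+1}$ because ideals are closed under addition. By the auxiliary fact this says $f\in C^{s_\beta}(\beta)$, and as $\beta\in\Delta_i$ was arbitrary we conclude $f\in\calS^{\br,\bs}(\Delta)$. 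The main obstacle is thus the combinatorial one of guaranteeing a connecting chain of $n$-faces all containing $\beta$ and meeting successively along interior codimension-$1$ faces; hereditariness, applied to $\beta'\supseteq\beta$ rather than to $\beta$ directly, is exactly the property that supplies it.
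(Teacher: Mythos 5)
Your proof is correct and follows essentially the same route as the paper: both directions rest on Billera's criterion (Theorem \ref{thm:algcriterion}) for the $C^{r_\tau}$ conditions, on identifying supersmoothness at $\beta$ with membership of the piecewise differences in $\fm_\beta^{s_\beta+1}$, and on a hereditary chain of $n$-faces meeting along interior codimension-$1$ faces to propagate those difference conditions to an arbitrary pair of $n$-faces containing $\beta$. Your one refinement --- invoking hereditariness for $\beta'=\sigma\cap\sigma'$ rather than for $\beta$ itself, since the definition only covers pairs whose intersection is exactly the given face --- tidies a point the paper's proof passes over silently.
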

	\begin{proof}
		Let $\sigma,\sigma'\in\Delta_n$ such that $\sigma\cap\sigma'=\tau\in\Delta_{n-1}^\circ$.
		Suppose $f|_{\sigma}-f|_{\sigma'}\in\check\tJ(\tau)$. 
		In particular, $f|_{\sigma}-f|_{\sigma'}\in \bigl\langle \check\ell_\tau^{r_\tau+1}\rangle$ and clearly the restriction of the derivatives up to order $r_\tau$ of $f|_{\hat\sigma}-f|_{\hat\sigma'}$ to the edge $\hat\tau$ are zero.
		On the other hand, $f|_{\sigma}-f|_{\sigma'}\in\fm_{\beta}^{s_\beta+1} $ for each  $\beta\in\Delta_{i}$ such that $\beta\subset\tau$, so the polynomial $f|_{\sigma}-f|_{\sigma'}$, and all its derivatives up to order $s_\beta$, vanish at $\beta$.
		
		By hypothesis $\Delta$ is hereditary, then there is a sequence of $n$-faces $\sigma_0,\sigma_1,\dots,\sigma_m$ such that $\sigma_j\supset\beta$ for all $j$ and  $\sigma_{j-1}\cap\sigma_{j}\in\Delta_{n-1}^\circ$. 
		Applying the previous argument to each pair of faces $\sigma_{j-1}$ and $\sigma_{j}$, we get that all the derivatives up to order $s_\beta$ of $f|_{\sigma_{j-1}}$ and $f|_{\sigma_{j}}$ coincide at $\beta$ for every $j=1,\dots, m$, and hence $f\in C^{s_\beta}(\beta)$ for each $\beta\in\Delta_{i}$. It follows that $f\in \calS^{\br,\bs}(\Delta)$.
		
		Conversely, if $f\in\calS^{\br,\bs}(\Delta)$ then by Theorem \ref{thm:algcriterion} $f|_{\sigma}-f|_{\sigma'}\in\bigl\langle\check\ell_\tau^{r_\tau+1}\bigr\rangle$ for all $\tau\in\Delta_{n-1}^\circ$. 
		Let $\beta$ be one of the $i$-faces of $\tau$. 
		The ideal $\fm_{\beta}=\{g\in\tR\colon g(\beta)=0\}$ is generated by 
		$n-i$ linearly independent linear polynomials, each of them vanishing at $\beta$. 
		By hypothesis, the function $f|_{\sigma}-f|_{\sigma'}$, and all its derivatives up to order $s_\beta$, are zero when restricted to $\beta$. 
		If follows $f|_{\sigma}-f|_{\sigma'}\in\fm_{\beta}$, and 
		by induction (on the order of the derivatives) we get that  $f|_{\sigma}-f|_{\sigma'}\in \fm_{\beta}^{s_\beta+1}$.
		This argument applies to every $i$-face $\beta\subset\tau$ and leads to $f|_{\sigma}-f|_{\sigma'}\in\check\tJ(\tau)$ for each $\tau\in\Delta_{n-1}^\circ$, as required.
	\end{proof}
	We now extends the construction by Billera \cite{billera1988homology} and refined by Schenck and Stillman in \cite{schenck1997local} to the context of superspline spaces. 
	
	If $\Delta\subseteq\R^n$ is a simplicial complex, let $\bigoplus_{\beta\in\Delta_i}\tS[\beta]$ be the direct sum of the polynomial ring $\tS$, where $[\beta]$ is a formal basis symbol corresponding to the $i$-face $\beta$. 
	If $\partial_i$ is the simplicial boundary map relative to the boundary $\partial\Delta$, we denote by $\calR$ the chain complex 
	\[
	\calR\colon 0\rightarrow \bigoplus_{\beta\in\Delta_n}\tS[\beta]\xrightarrow{\partial_n}\dots\rightarrow \bigoplus_{\sigma\in\Delta_2}\tS[\sigma] \xrightarrow{\partial_2} \bigoplus_{\tau\in\Delta_1^\circ}\tS[\tau] \xrightarrow{\partial_1} \bigoplus_{\gamma\in\Delta_0^\circ}\tS[\gamma] \rightarrow 0.\]
	The restriction of the maps $\partial_i$ to the ideals $\bigoplus_{\beta\in\Delta_i}\tJ(\beta)$ yields the subcomplex $\calJ$ given by 
	\begin{equation}\label{eq:complexJ}
		\calJ\colon 0\rightarrow  \bigoplus_{\tau\in\Delta_{n-1}^\circ}\tJ(\tau) \xrightarrow{{\partial}_{n-1}}\cdots\xrightarrow{\partial_1} \bigoplus_{\gamma\in\Delta_0^\circ}\tJ(\gamma) \rightarrow 0; 
	\end{equation}
	and taking the quotient leads to chain complex $\calR/\calJ$ given by
	\begin{equation}\label{eq:quotient}
		\calR/\calJ\colon 0\rightarrow \bigoplus_{\beta\in\Delta_n}\tS[\beta] \xrightarrow{\overline{\partial}_n}\cdots\rightarrow \bigoplus_{\tau\in\Delta_1^\circ}\tS/\tJ(\tau) \xrightarrow{\overline{\partial}_1} \bigoplus_{\gamma\in\Delta_0^\circ}\tS/\tJ(\gamma) \rightarrow 0.
	\end{equation}
	If we take $s_\beta=r_\tau=r$, for some $r\in\Z_{\geq 0}$, for all faces $i$-faces $\beta$ and all codimension-1 faces $\tau$, the complex $\calR/\calJ$ reduces to that in \cite{schenck1997local}.
	
	We recall that for a chain complex $\calC$ with boundary maps $\partial_i$, the $i$-th homology module $H_i(\calC)$ is defined as   $H_i(\calC)=\ker(\partial_{i})/\im(\partial_{i-1})$.
	It was shown by  Billera  in \cite{billera1988homology} that 
	$\calS^{r}(\hat\Delta)\cong H_n(\calR/\calJ)=\ker{\overline{\partial}_n}\,$.
	This isomorphism also holds in our settings, it follows by the algebraic criterion in Theorem \ref{prop:newalgcriteriongen}. 
	However, in contrast to the case of splines with uniform global smoothness conditions $r=s$, in our settings we need to specify the superspline space we consider on $\Delta$ and the corresponding one on $\hat\Delta$. 
	Namely, if we take the set $\calS^{\br,\bs}(\Delta)$ of $C^r$-continuous splines on $\Delta$ with supersmoothness $\bs$ on the $i$-faces $\beta\in\Delta_i$, the corresponding spline space on $\hat\Delta$, denoted $\calS^{\br,\bs}(\hat\Delta)$, is the set of $C^r$-splines on $\hat\Delta$ with supersmoothness $\bs$ at the $(i+1)$-faces $\hat\beta$ of $\hat\Delta$.
	Following this notation we have the following results.
	\begin{corollary}\label{cor:kernelcomplexR/J}
		Let $\Delta\subseteq \R^n$ be a pure and hereditary simplicial complex and let $0\leq r_\tau\leq s_\beta$ be integers for each $\tau\in\Delta_{n-1}^\circ$ and $\beta\in\Delta_i^\circ$, for a fixed $0\leq i\leq n-2$. 
		Then, $\calS^{\br,\bs}(\hat\Delta)\cong \ker\bigl(\overline{\partial}_n\bigr)$, where $\calS^{\br,\bs}(\hat\Delta)$ is the set of $C^r$-splines with supersmoothness $\bs$ at the $(i+1)$-faces $\hat\beta$, and $\overline{\partial}_n$ is the differential map in the chain complex $\calR/\calJ$ in Equation \eqref{eq:quotient}.
	\end{corollary}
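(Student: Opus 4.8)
The plan is to read $\ker(\overline\partial_n)$ explicitly and then recognize its defining conditions as exactly the smoothness and supersmoothness conditions defining $\calS^{\br,\bs}(\hat\Delta)$, via the criterion of Theorem \ref{prop:newalgcriteriongen}. First I would unwind the map $\overline\partial_n\colon\bigoplus_{\sigma\in\Delta_n}\tS[\sigma]\to\bigoplus_{\tau\in\Delta_{n-1}^\circ}\tS/\tJ(\tau)$. After fixing an orientation, an element of the source is a tuple $(g_\sigma)_{\sigma\in\Delta_n}$ of homogeneous polynomials. Because $\Delta$ is pure of dimension $n$, each interior codimension-$1$ face $\tau$ is a face of exactly two top faces $\sigma,\sigma'$, while each boundary codimension-$1$ face lies in only one and is killed by the relative boundary map (this is why the target is indexed by $\Delta_{n-1}^\circ$). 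Hence the $\tau$-component of $\partial_n\bigl((g_\sigma)\bigr)$ is $\pm(g_\sigma-g_{\sigma'})$, and the $\tau$-component of $\overline\partial_n\bigl((g_\sigma)\bigr)$ is its class in $\tS/\tJ(\tau)$. As $\tJ(\tau)$ is an ideal the sign is irrelevant, giving
\[
(g_\sigma)\in\ker(\overline\partial_n)\iff g_\sigma-g_{\sigma'}\in\tJ(\tau)\quad\text{for every }\tau=\sigma\cap\sigma'\in\Delta_{n-1}^\circ.
\]

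Next I would compare with the spline space through the two obvious $\tS$-linear, degree-preserving maps: restriction, $f\mapsto\bigl(f|_{\hat\sigma}\bigr)_{\sigma\in\Delta_n}$, sending a spline on $\hat\Delta$ to its homogeneous pieces on the maximal cones; and gluing, sending a compatible tuple back to a piecewise polynomial function. These are mutually inverse on compatible data, so the whole content is that the restriction map carries $\calS^{\br,\bs}(\hat\Delta)$ bijectively onto $\ker(\overline\partial_n)$. Here Theorem \ref{prop:newalgcriteriongen} enters: via the homogenization correspondence a degree-$d$ spline $f$ on $\hat\Delta$ matches a spline $\check f\in\calS^{\br,\bs}_d(\Delta)$ whose homogenized pieces are the $f|_{\hat\sigma}$, and the criterion characterizes this membership precisely by $f|_{\hat\sigma}-f|_{\hat\sigma'}\in\tJ(\tau)$ for all $\tau\in\Delta_{n-1}^\circ$. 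The crucial bookkeeping is the dimension shift under coning: an $i$-face $\beta\in\Delta_i$ becomes the $(i+1)$-face $\hat\beta$, and by \eqref{eq:edgesgen} the ideal $\tJ(\tau)$ bundles $\langle\ell_\tau^{r_\tau+1}\rangle$ with the data $\hat\fm_\beta^{s_\beta+1}$ over the faces $\beta\subset\tau$, so the supersmoothness it encodes is exactly $C^{s_\beta}$-smoothness at the $(i+1)$-faces $\hat\beta$ named in the statement. Matching this with the kernel description of the first paragraph yields the isomorphism.

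I expect the genuine obstacle to be the passage from the pairwise, across-edge membership $g_\sigma-g_{\sigma'}\in\hat\fm_\beta^{s_\beta+1}$ to honest $C^{s_\beta}$-supersmoothness of $f$ at the face $\hat\beta$: the former constrains only the two cones meeting along $\hat\tau$, whereas supersmoothness at $\hat\beta$ is a condition on all maximal cones containing $\hat\beta$. This is precisely the point that the proof of Theorem \ref{prop:newalgcriteriongen} resolves using the hereditary hypothesis, by propagating the agreement of derivatives at $\beta$ along a chain $\sigma_0,\dots,\sigma_m$ of top faces joined across interior codimension-$1$ faces; I would therefore import that argument rather than repeat it. To apply it in the coned setting I would only need the routine facts that $\hat\Delta$ is again pure and hereditary and that a chain realizing heredity for $\beta$ in $\Delta$ cones to one realizing heredity for $\hat\beta$ in $\hat\Delta$, together with the standard degree bookkeeping that homogenization and dehomogenization are inverse in each fixed degree $d$ and carry $\check\tJ(\tau)$ to $\tJ(\tau)$ and back.
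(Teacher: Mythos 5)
Your proposal is correct and follows essentially the same route as the paper: the paper's proof is exactly the observation that $(g_\sigma)\in\ker(\overline\partial_n)$ means $g_\sigma-g_{\sigma'}\in\tJ(\tau)$ across every interior codimension-$1$ face, which by Theorem \ref{prop:newalgcriteriongen} (applied in the coned setting, with the $i$-to-$(i+1)$ face shift you describe) characterizes membership in $\calS^{\br,\bs}(\hat\Delta)$. Your additional remarks on unwinding the relative boundary map, on $\hat\Delta$ being pure and hereditary, and on importing the chain-propagation argument are exactly the bookkeeping the paper leaves implicit.
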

	\begin{proof}
		By Theorem \ref{prop:newalgcriteriongen}, we have that $f\in\calS^{\br,\bs}(\hat\Delta)$ if and only if $\partial_n(f)|_\tau=f|_{\hat\sigma}-f|_{\hat\sigma'}\in\tJ(\tau)$ for each $\tau\in\Delta_{n-1}^\circ$, or equivalently, if and only if  $f\in\ker(\overline{\partial}_n)$, as required.
	\end{proof}
	\begin{proposition}\label{thm:spline_space_isomorphism}
		If $\Delta\subseteq \R^n$ is a pure and hereditary simplicial complex, then $ \calS^{\br,\bs}_{d}(\Delta)\cong \calS^{\br,\bs}(\hat\Delta)_d\,,$ as real vector spaces.	
	\end{proposition}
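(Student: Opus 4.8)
The plan is to construct mutually inverse $\R$-linear maps between $\calS^{\br,\bs}_{d}(\Delta)$ and $\calS^{\br,\bs}(\hat\Delta)_d$ by piecewise homogenization and dehomogenization, following the proof of Theorem \ref{thm:Homogenize}, while tracking the superspline ideals $\tJ(\tau)$ and $\check\tJ(\tau)$ through both operations. First I would record the structural fact underlying \eqref{eq:starvertexhom}: since $\hat\Delta$ is the star of the origin, every ideal $\tJ(\tau)$ is homogeneous, so the complex $\calR/\calJ$ of \eqref{eq:quotient} is a complex of graded $\tS$-modules with degree-preserving differentials. Hence by Corollary \ref{cor:kernelcomplexR/J} the module $\calS^{\br,\bs}(\hat\Delta)\cong\ker(\overline\partial_n)$ is graded, and $\calS^{\br,\bs}(\hat\Delta)_d$ is exactly its degree-$d$ piece, consisting of tuples $(F_\sigma)_{\sigma\in\Delta_n}$ of homogeneous polynomials of degree $d$ with $F_\sigma-F_{\sigma'}\in\tJ(\tau)$ whenever $\sigma\cap\sigma'=\tau\in\Delta_{n-1}^\circ$.

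Next I would define the forward map by $f\mapsto\bigl(\mathrm{hom}_d(f|_\sigma)\bigr)_\sigma$, where $\mathrm{hom}_d(g):=x_0^d\,g(x_1/x_0,\dots,x_n/x_0)$ homogenizes a polynomial of degree at most $d$ to degree exactly $d$. Well-definedness reduces, through the criterion of Theorem \ref{prop:newalgcriteriongen}, to showing that $\mathrm{hom}_d$ sends $g:=f|_\sigma-f|_{\sigma'}\in\check\tJ(\tau)$ into $\tJ(\tau)$. Because $\mathrm{hom}_d$ is $\R$-linear we have $\mathrm{hom}_d(f|_\sigma)-\mathrm{hom}_d(f|_{\sigma'})=\mathrm{hom}_d(g)$, and writing $\mathrm{hom}_d(g)=x_0^{\,d-\deg g}\,\hat g$ with $\hat g$ the standard homogenization, the claim follows since $\hat g\in\tJ(\tau)$ by definition of $\tJ(\tau)$ as the homogenization of $\check\tJ(\tau)$, and $\tJ(\tau)$, being homogeneous, absorbs the extra factor $x_0^{\,d-\deg g}$.

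The inverse is piecewise dehomogenization $x_0\mapsto 1$. Since dehomogenizing the homogeneous ideal $\tJ(\tau)$ returns $\check\tJ(\tau)$, any $(F_\sigma)\in\calS^{\br,\bs}(\hat\Delta)_d$ satisfies $\check F_\sigma-\check F_{\sigma'}\in\check\tJ(\tau)$, so by Theorem \ref{prop:newalgcriteriongen} the resulting piecewise polynomial of degree at most $d$ lies in $\calS^{\br,\bs}_{d}(\Delta)$. That the two maps are mutually inverse is the elementary fact that $\mathrm{hom}_d$ and dehomogenization invert each other on polynomials of degree at most $d$ (respectively, homogeneous of degree $d$), applied piece by piece, and both maps are visibly $\R$-linear; this yields the asserted isomorphism of vector spaces.

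I expect the main obstacle to be the well-definedness step: one must take care that homogenizing each piece to the \emph{common} degree $d$, rather than to its own degree, still respects the mixed and supersmoothness conditions. The resolution rests on the two facts verified above, namely that degree-$d$ homogenization differs from standard homogenization only by a power of $x_0$, and that $\tJ(\tau)$ is homogeneous and equal to the homogenization of $\check\tJ(\tau)$; once these are in place, the remaining content is routine bookkeeping transported through Theorem \ref{prop:newalgcriteriongen}.
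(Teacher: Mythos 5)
Your proposal is correct and follows essentially the same route as the paper: the paper's proof likewise defines the piecewise (de)homogenization map $\varphi$ from $\calS^{\br,\bs}(\hat\Delta)_d$ to $\calS^{\br,\bs}_{d}(\Delta)$, following \cite[Theorem 2.6]{billeraR91}, and concludes it is an isomorphism by applying Theorem \ref{prop:newalgcriteriongen} on both sides. Your write-up simply makes explicit the details the paper leaves implicit, namely the degree-$d$ homogenization inverse, the transfer of membership between $\check\tJ(\tau)$ and $\tJ(\tau)$, and the verification that the two maps are mutually inverse.
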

	\begin{proof}
		We follow the argument used to prove the corresponding statement for  $\calS^{r}_{d}(\Delta)$ in \cite[Theorem 2.6]{billeraR91}. 
		We define the map 
		$\varphi\colon\calS^{\br,\bs}(\hat\Delta)_d\rightarrow\calS^{\br,\bs}_{d}(\Delta)$ by $\varphi(f)|_{\hat\sigma}= {\check f}|_{\sigma}$ for each $\sigma\in\Delta_n$, where $\check f|_{\sigma}$ is the dehomogenization of $f|_\sigma$ and $\hat\sigma$ is the cone over $\sigma$.
		It is easy to see that $\varphi$ is an $\R$-linear map. 
		Theorem \ref{prop:newalgcriteriongen} applied to both $\calS^{\br,\bs}(\Delta)$ (with supersmoothness $\bs$ at the $i$-faces $\beta$ of $\Delta$) and $\calS^{\br,\bs}(\hat\Delta)$ (with supersmothness $\bs$ at the $(i+1)$-faces $\hat\beta$ of $\hat\Delta$) implies that $\varphi$ is an isomorphism of real vector spaces.
	\end{proof}
	If $\calC\colon 0\rightarrow C_n\xrightarrow{\partial_n} C_{n-1}\xrightarrow{\partial_{n-1}} \cdots \xrightarrow{\partial_1} C_0\rightarrow 0$ is a chain complex of graded modules $C_i$ and boundary maps $\partial_i$, we denote by $\chi(\calC,d)=\sum_{i=0}^n (-1)^{i}\dim (C_{n-i})_d$ the \emph{Euler-Poincar\'e characteristic} of $\calC$ at degree $d$. 
	Taking the homology modules $H_i(\calC)$ it follows $\chi(\calC,d)= \sum_{i=0}^n(-1)^{i}\dim H_{n-i}(\calC)_d$, and therefore $\sum_{i=0}^n (-1)^{i}\dim (C_{n-i})_d= \sum_{i=0}^n(-1)^{i}\dim H_{n-i}(\calC)_d$. 
	(This result from homological algebra can be found in \cite[\S 4]{spanier}, for instance.)
	We apply this equality to the complex $\calR/\calJ$, which together to Corollary \ref{cor:kernelcomplexR/J} and Proposition \ref{thm:spline_space_isomorphism}, leads to
	\begin{equation}\label{eq:dimformula}
		\dim \calS_d^{\br,\bs}(\Delta)=\sum_{i=0}^n(-1)^{i}\dim  \bigoplus_{\beta\in\Delta_{n-i}^\circ}\tS/\tJ(\beta)_d -\sum_{i=1}^n(-1)^{i}\dim H_{n-i}(\calR/\calJ)_d \,. 
	\end{equation}
	In Equation \eqref{eq:dimformula}, we consider all maximal $n$-faces of $\Delta$ to be interior, so $\Delta_n^\circ = \Delta_n$.
	\section{Supersmooth ideals at edges and vertices}\label{sec:idealedges}
	In this section we assume $\Delta$ is a simplicial complex in $\R^2$, and study the dimension of the modules on the right hand side of Equation \eqref{eq:dimformula}. 
	The objective is to get an explicit formula for  $\dim \calS_d^{\br,\bs}(\Delta)$ for special cases of $\Delta$, which we use in Section \ref{sec:bounds} to prove a lower bound on $\dim \calS_d^{\br,\bs}(\Delta)$ for arbitrary triangulations homeomorphic to a disk.
	
	If $\Delta\subseteq \R^2$, Equation \eqref{eq:dimformula} simplifies to 
	\begin{multline}\label{eq:dimformulaR2}
		\dim \calS_d^{\br,\bs}(\Delta)=\dim  \bigoplus_{\sigma\in\Delta_2} \tS[\sigma]_d - \dim \bigoplus_{\tau\in\Delta_1^\circ}\tS/\tJ(\tau)_d + \dim \bigoplus_{\gamma\in\Delta_0^\circ}\tS/\tJ(\gamma)_d  \\
		+\dim H_1(\calR/\calJ)_d 
		- \dim H_0(\calR/\calJ)_d	
	\end{multline}
	The short exact sequence of complexes $0\rightarrow \calJ\rightarrow \calR\rightarrow \calR/\calJ\rightarrow 0$ leads to long exact sequence of homology modules $H_i(\calJ)$, $H_i(\calR)$ and $H_i(\calR/\calJ)$. 
	In particular, if $\Delta\subseteq\R^2$ is homeomorphic to a disk then $H_0(\calR)=H_1(\calR)=0$ and it implies $H_0(\calR/\calJ)=0$ and $H_1(\calR/\calJ)\cong H_0(\calJ)$, respectively. Therefore, in this case
	Equation \eqref{eq:dimformulaR2} can be written as
	\begin{equation}\label{eq:dimformuladisk}
		\dim \calS_d^{\br,\bs}(\Delta)= \binom{d+2}{2} + \sum_{\tau\in\Delta_1^\circ}\dim \tJ(\tau)_d - \sum_{\gamma\in\Delta_0^\circ}\dim \tJ(\gamma)_d  +\dim H_0(\calJ)_d\, .
	\end{equation}
	
	\subsection{Ideals of edges and vertices}\label{sec:edge-vertex-ideals}
	If $\tau = [\gamma,\gamma']\in\Delta_1^\circ$ is an interior edge of $\Delta$ with vertices $\gamma$ and $\gamma'$, we write $\tJ(\tau)\subseteq\tS$ for the ideal of $\tau$ defined in \eqref{eq:edges}.  
	In the following we prove a dimension formula for the graded pieces $\tJ(\tau)_d$.
	Let $r_\tau=r\geq 0$ be the smoothness across the edge $\tau$ and take an integer $s\geq r$. 
	
	We consider two cases: In Lemma \ref{lem:dim_edge_ideal} the supersmoothness at both vertices of $\tau$ is the same i.e., $s_\gamma = s_{\gamma'} = s$, and in Lemma \ref{lem:dimEborder} only one of the vertices has supersmoothness  $s_\gamma = s$ while the smoothness at $\gamma'$ is $s_{\gamma'} = r$.
	Only the fist case is needed if the splines have supersmoothness $s$ at all the vertices of the $\Delta$, but the second is necessary for instance, to consider splines with supersmoothness only at the interior (and not at the boundary) vertices of the partition.
	
	In the following lemmas, and throughout this paper, we define $\binom{a}{b} =0$ whenever $a< b$. 
	\begin{lemma}\label{lem:dim_edge_ideal}	
		If $[\gamma,\gamma']=\tau\in\Delta_1^\circ$ is an edge with vertices $\gamma$ and $\gamma'$ and $\tJ(\tau)=\langle \ell_\tau^{r+1}\rangle\cap \hat\fm_\gamma^{s+1}\cap \hat\fm_{\gamma'}^{s+1}= \bigl\langle \ell_\tau^{s+1-i}\ell_{\tau,\gamma}^i\ell_{\tau,\gamma'}^ i\colon 0\leq i\leq s-r\bigr\rangle$ is the ideal defined in \eqref{eq:edges}, then
		\begin{align}\label{eq:dimE}
			\dim \bigl(\tS/\tJ(\tau)\bigr)_d 
			&= 
			\binom{d+2}{2}- \sum_{i=0}^{s - r } \binom{d-s  -i+1}{2} + \sum_{i=1}^{s  -  r } \binom{d-s  - i}{2}\,\nonumber\\
			&=
			\binom{d+2}{2}+\binom{d-2s+r}{2}- (d-s)^2\,.
		\end{align}
	\end{lemma}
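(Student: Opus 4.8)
The plan is to compute the Hilbert function of $\tJ(\tau)$ by reducing to a monomial ideal and reading it off an explicit graded free resolution. First I would note that the three linear forms $\ell_\tau,\ell_{\tau,\gamma},\ell_{\tau,\gamma'}$ are linearly independent: neither $\ell_{\tau,\gamma}$ nor $\ell_{\tau,\gamma'}$ is proportional to $\ell_\tau$ (otherwise $\V(\ell_\tau,\ell_{\tau,\gamma})$ would be all of the plane $\{\ell_\tau=0\}$ rather than a line), and they are independent modulo $\ell_\tau$ because the cone lines $\hat\gamma\neq\hat\gamma'$ are distinct. Hence a linear change of coordinates on $\tS=\R[x,y,z]$ sends $(\ell_\tau,\ell_{\tau,\gamma},\ell_{\tau,\gamma'})$ to $(x,y,z)$, and since the Hilbert function is invariant under such a change it suffices to treat the monomial ideal $\tJ(\tau)=\langle g_i\colon 0\le i\le s-r\rangle$ with $g_i=x^{\,s+1-i}y^{\,i}z^{\,i}$ and $\deg g_i=s+1+i$.

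The crucial structural observation is that this monomial ideal has linear quotients for the ordering $g_0,g_1,\dots,g_{s-r}$. Indeed, for $j<i$ one computes $g_j\colon g_i=\langle\operatorname{lcm}(g_j,g_i)/g_i\rangle=\langle x^{\,i-j}\rangle$, so that
\[
\langle g_0,\dots,g_{i-1}\rangle\colon g_i=\langle x^{\,i},x^{\,i-1},\dots,x\rangle=\langle x\rangle .
\]
Because each such colon ideal is generated by the single linear form $x$, the only first syzygy attached to $g_i$ (for $i\ge 1$) is the Koszul relation $yz\,g_{i-1}=x\,g_i$, sitting in degree $\deg g_i+1=s+2+i$; one checks directly that these relations $yz\,e_{i-1}-x\,e_i$ are syzygies, are $\tS$-independent by a triangular argument, and generate everything. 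This forces the minimal graded free resolution
\[
0\to\bigoplus_{i=1}^{s-r}\tS\bigl(-(s+2+i)\bigr)\to\bigoplus_{i=0}^{s-r}\tS\bigl(-(s+1+i)\bigr)\to\tJ(\tau)\to 0 .
\]

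Taking the alternating sum of Hilbert functions along this resolution, and using $\dim\tS(-a)_d=\binom{d-a+2}{2}$, yields
\[
\dim\tJ(\tau)_d=\sum_{i=0}^{s-r}\binom{d-s-i+1}{2}-\sum_{i=1}^{s-r}\binom{d-s-i}{2},
\]
and subtracting from $\dim\tS_d=\binom{d+2}{2}$ produces the first displayed expression of the lemma. To pass to the closed form I would telescope: setting $b_k=\binom{d-s-k}{2}$, the first sum is $b_{-1}+\dots+b_{s-r-1}$ and the second is $b_1+\dots+b_{s-r}$, so the middle terms cancel and the difference collapses to $b_{-1}+b_0-b_{s-r}=\binom{d-s+1}{2}+\binom{d-s}{2}-\binom{d-2s+r}{2}$; finally the elementary identity $\binom{u+1}{2}+\binom{u}{2}=u^2$ with $u=d-s$ rewrites $\binom{d-s+1}{2}+\binom{d-s}{2}=(d-s)^2$.

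Since the colon-ideal computation is immediate, the step needing the most care is verifying that the consecutive Koszul relations generate the entire first syzygy module, as it is this (the validity of the linear-quotient ordering) that forces the resolution to have length two and exactly $s-r$ first syzygies; the alternative, a direct lattice-point count of the monomials outside $\tJ(\tau)$, gives the same answer but requires an awkward case split on $\min(b,c)$. A secondary subtlety is the convention $\binom{a}{b}=0$ for $a<b$: the first (binomial) form is the honest Hilbert function for every $d$, whereas $\binom{u+1}{2}+\binom{u}{2}=u^2$ holds only for $u\ge 0$, so the closed form involving $(d-s)^2$ records the correct value precisely for $d\ge s$ (the range relevant to the high-degree applications), while for $d<s$ one simply has $\dim\bigl(\tS/\tJ(\tau)\bigr)_d=\binom{d+2}{2}$.
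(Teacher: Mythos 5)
Your proof is correct and takes essentially the same route as the paper: after the same change of coordinates to the monomial ideal $\langle x^{s+1-i}y^iz^i\colon 0\leq i\leq s-r\rangle$, your resolution built from the consecutive Koszul syzygies $yz\,e_{i-1}-x\,e_i$ is precisely the paper's exact sequence (whose map $\varphi_2$ is the bidiagonal matrix with entries $yz$ and $-x$), and the alternating-sum Hilbert function computation is identical. The only differences are that you explicitly verify exactness via linear quotients and note the range of validity ($d\geq s$) of the closed form $(d-s)^2$, both points the paper leaves implicit.
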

	\begin{proof}
		By a change of coordinates we may assume $\tJ(\tau) = \left\langle x^{s +1-i} y^{i}z^{i}\colon 0 \leq i \leq s - r  \right\rangle\,.$ The following sequence is exact, 
		\begin{equation}\label{eq:resEideal}
			0\rightarrow \bigoplus_{i=1}^{s -r}\tS(-s -i-2)
			\xrightarrow{\varphi_2} 
			\bigoplus_{i=0}^{s - r } \tS(-s -i-1)
			\xrightarrow{\varphi_1} 
			\tS
			\rightarrow \tS/\tJ(\tau)
			\rightarrow 0\ , 
		\end{equation}
		where $\varphi_1$ is defined by $(x^{s+1}, \ x^{s }yz, \dots, x^{ r +1}y^{s - r }z^{s - r })$, and $\varphi_2$ by the  $(s - r +1) \times (s - r )$ matrix
		\begin{equation*}
			\begin{pmatrix}
				yz & 0  & \cdots & 0 \\
				-x & yz & \cdots & 0 \\
				0  & -x & \cdots & 0 \\
				\vdots &\vdots & \ddots &\vdots \\
				0  & 0  & \cdots & yz\\
				0  & 0  & \cdots & -x
			\end{pmatrix}\;.
		\end{equation*} 
		Thus, \[\dim (\tS/\tJ_{\tau})_d = \dim \tS_\tJ -\sum_{i=0}^{s  -  r }\dim \tS(-s  -1-i)_d+\sum_{i=1}^{s  -  r }\tS(-s  -i-2)_d\ ,\]
		which directly leads to Equation \eqref{eq:dimE}.
	\end{proof}
	
	\begin{lemma}\label{lem:dimEborder}	
		If $[\gamma,\gamma']=\tau\in\Delta_1^\circ$, an edge with vertices $\gamma$ and $\gamma'$ and  $\tJ(\tau)=\langle \ell_\tau^{r+1}\rangle\cap \hat\fm_\gamma^{s+1}\cap \hat\fm_{\gamma'}^{r+1}=\langle\ell_\tau^{r+1}\rangle\cap \hat\fm_\gamma^{s+1} = \bigl\langle \ell_\tau^{s+1-i}\ell_{\tau,\gamma}^i\colon 0\leq i\leq s-r\bigr\rangle$ as defined in \eqref{eq:edges}, then 
		\begin{equation}\label{eq:dimEborder}
			\dim \bigl(\tS/\tJ(\tau)\bigr)_d = \binom{d+2}{2}- (s-r+1) \binom{d-s +1}{2} +(s-r)\binom{d-s}{2}\,.
		\end{equation}
	\end{lemma}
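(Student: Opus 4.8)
The plan is to mirror the proof of Lemma \ref{lem:dim_edge_ideal}: after a linear change of coordinates I would exhibit an explicit graded free resolution of $\tS/\tJ(\tau)$ and read off the Hilbert function as the alternating sum of the graded Betti numbers.

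First I would choose coordinates so that $\ell_\tau = x$ and $\ell_{\tau,\gamma} = y$. Since here only the single vertex $\gamma$ carries the enhanced smoothness $s$, the ideal simplifies to the monomial ideal
\begin{equation*}
\tJ(\tau) = \bigl\langle x^{s+1-i}y^{i} \colon 0 \leq i \leq s-r \bigr\rangle = \langle x^{r+1}\rangle \cdot \langle x,y\rangle^{s-r},
\end{equation*}
whose generators \emph{all} lie in the single degree $s+1$. This is the essential structural difference from Lemma \ref{lem:dim_edge_ideal}, where the presence of the third linear form produced generators spread across the degrees $s+1,\dots,2s-r+1$.

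Next I would write down the resolution
\begin{equation*}
0 \rightarrow \bigoplus_{i=1}^{s-r}\tS(-s-2) \xrightarrow{\psi_2} \bigoplus_{i=0}^{s-r}\tS(-s-1) \xrightarrow{\psi_1} \tS \rightarrow \tS/\tJ(\tau) \rightarrow 0,
\end{equation*}
where $\psi_1$ sends the basis vectors to the generators $x^{s+1}, x^{s}y, \dots, x^{r+1}y^{s-r}$, and $\psi_2$ is the bidiagonal $(s-r+1)\times(s-r)$ matrix obtained from the one in \eqref{eq:resEideal} by replacing every entry $yz$ with $y$. Each column of $\psi_2$ records the evident syzygy $y\cdot(x^{s+1-i}y^{i}) - x\cdot(x^{s-i}y^{i+1}) = 0$. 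Because $\tJ(\tau) = x^{r+1}\langle x,y\rangle^{s-r}$ is the degree shift by $r+1$ of the power $\langle x,y\rangle^{s-r}$ of a two-generated ideal, exactness is the standard Koszul resolution of $\langle x,y\rangle^{s-r}$ twisted by $x^{r+1}$; alternatively it follows verbatim from the exactness argument already given for \eqref{eq:resEideal}. Taking the alternating sum of graded dimensions, and using $\dim\tS(-k)_d = \binom{d-k+2}{2}$, then yields
\begin{equation*}
\dim\bigl(\tS/\tJ(\tau)\bigr)_d = \binom{d+2}{2} - (s-r+1)\binom{d-s+1}{2} + (s-r)\binom{d-s}{2},
\end{equation*}
which is \eqref{eq:dimEborder}.

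Since the whole argument is a direct adaptation of the preceding lemma, there is no genuine obstacle; the only point requiring care is the bookkeeping of degree shifts, namely that all $s-r+1$ first-syzygy generators now sit in the \emph{same} degree $s+1$, so they contribute the uniform shift $\tS(-s-1)$ rather than the staircase of shifts appearing in \eqref{eq:resEideal}. Exactness of $\psi_2$ is immediate from the Buchsbaum--Eisenbud criterion: the matrix has the required rank $s-r$, and its maximal minors are, up to sign, the monomials of degree $s-r$ in $x$ and $y$, which generate an ideal of codimension $2$ in the Cohen--Macaulay ring $\tS$.
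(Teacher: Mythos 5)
Your proposal is correct and follows essentially the same route as the paper: after the change of coordinates, the paper's proof also takes the resolution \eqref{eq:resEideal} with $\varphi_1$ given by the degree-$(s+1)$ monomials in two variables and $\varphi_2$ by the bidiagonal matrix with linear entries, and reads off the Hilbert function as the alternating sum. The only difference is that you supply the exactness justification (via the twist $\tJ(\tau)=x^{r+1}\langle x,y\rangle^{s-r}$ of the standard resolution of $\langle x,y\rangle^{s-r}$, or Buchsbaum--Eisenbud) that the paper leaves implicit, which is a welcome addition rather than a deviation.
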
	
	\begin{proof}
		By a change of coordinates we may assume that $\gamma$ is at the origin of $\R^2$. 
		Using a similar construction to that in Lemma \ref{lem:dim_edge_ideal} leads to resolution of the ideal $\tJ(\tau)$, 
		where $\varphi_1$ and $\varphi_2$ in \eqref{eq:resEideal} are defined by monomials in two variables of degree $s+1$ and $1$ respectively.
	\end{proof} 
	If $\gamma \in \Delta_0$ is a vertex in $\Delta$, we write $\tJ(\gamma)$ for the ideal of $\gamma$ defined in \eqref{eq:ifacesgen}; in our case, $\Delta\subseteq\R^2$ and 
	\begin{equation}\label{eq:vertices}
		{\tJ(\gamma)=
			\sum_{\tau\supset\gamma, \tau\in\Delta_1^\circ}\tJ(\tau)
			=
			\sum_{{\gamma' \in \Delta_0\,,\tau=[\gamma,\gamma']\in\Delta_1}}\langle \ell_\tau^{r_\tau+1}\rangle\cap \hat\fm_\gamma^{s_\gamma+1}\cap \hat\fm_{\gamma'}^{s_{\gamma'}+1}}.
	\end{equation}
	Let us assume the smoothness across the edges $[\gamma, \gamma']=\tau\in\Delta_1^\circ$ is uniform $r_\tau=r$, and there is supersmoothness only at $\gamma$ given by $s_\gamma=s$. Then, $s_\gamma'=r$ in \eqref{eq:vertices}, and we can rewrite it as 
	\begin{equation}\label{eq:vertexone-s}
		\tJ(\gamma)=
		\sum_{\tau\ni\gamma\,,\tau\in\Delta_1}\bigl(\bigl\langle \ell_\tau^{r+1}\rangle\cap \hat\fm_\gamma^{s+1}\bigr).
	\end{equation}
	Before we compute $\dim \tJ(\gamma)_d$ for $\tJ(\gamma)$ in \eqref{eq:vertexone-s}, we prove the following preliminary result. 
	
	\begin{lemma}\label{lem:star-vertex-ideal}
		Let $\fm$ be the maximal ideal in $\tR$ of all polynomials vanishing at $\gamma$, and for each edge $\tau\in\Delta_{1}^\circ$ let $\ell_\tau$ be a linear form vanishing at $\hat\tau$. Then
		\begin{equation*}
			\sum_{\tau_\ni\gamma} \bigl(\hat\fm^{s+1} \cap \langle {\ell}_\tau^{r+1}\rangle\bigr)=\hat\fm^{s+1} \cap \sum_{\tau\ni\gamma}\langle {\ell}_{\tau}^{r+1}\rangle \,.
		\end{equation*}
	\end{lemma}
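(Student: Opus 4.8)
The plan is to reduce the identity to a single, well-chosen grading of $\tS$ under which it becomes transparent. First note that the inclusion $\sum_{\tau\ni\gamma}\bigl(\hat\fm^{s+1}\cap\langle\ell_\tau^{r+1}\rangle\bigr)\subseteq\hat\fm^{s+1}\cap\sum_{\tau\ni\gamma}\langle\ell_\tau^{r+1}\rangle$ is immediate, since each summand on the left lies in $\hat\fm^{s+1}$ and in $\langle\ell_\tau^{r+1}\rangle\subseteq\sum_{\tau\ni\gamma}\langle\ell_\tau^{r+1}\rangle$; the content of the lemma is the reverse inclusion. In general, intersection of ideals does not distribute over sums, so the whole argument hinges on the special position of the forms $\ell_\tau$ relative to $\hat\fm$.

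The geometric fact I would establish first is that $\ell_\tau\in\hat\fm$ for every edge $\tau\ni\gamma$. Indeed, $\hat\gamma$ is a face of the cone $\hat\tau$, so $\hat\gamma\subseteq\hat\tau$, and since $\ell_\tau$ vanishes on $\hat\tau$ it vanishes on $\hat\gamma$. As $\hat\fm$ is exactly the homogeneous ideal of the line $\hat\gamma$, generated by two linearly independent linear forms, and $\ell_\tau$ is a linear form vanishing on that line, we get $\ell_\tau\in\hat\fm$. I can therefore perform a linear change of coordinates on $\tS=\R[x,y,z]$ so that $\hat\fm=\langle x,y\rangle$ and $\hat\gamma=\V(x,y)$ is the $z$-axis; then each $\ell_\tau$, being a linear form lying in $\hat\fm$, is a linear form in $x,y$ alone.

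With this normalization I would grade $\tS$ by $(x,y)$-degree, assigning $\deg x=\deg y=1$ and $\deg z=0$, so that $\tS_d$ is the free $\R[z]$-module on the monomials $x^iy^j$ with $i+j=d$. Under this grading every ideal in sight is homogeneous: each $\langle\ell_\tau^{r+1}\rangle$ is generated by the degree-$(r+1)$ form $\ell_\tau^{r+1}$, while $\hat\fm^{s+1}=\langle x,y\rangle^{s+1}$ is the truncation $\bigoplus_{d\ge s+1}\tS_d$, that is, $(\hat\fm^{s+1})_d=\tS_d$ for $d\ge s+1$ and $(\hat\fm^{s+1})_d=0$ for $d\le s$. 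Since intersections and sums of homogeneous submodules are computed degree by degree, it suffices to compare both sides in each degree $d$: for $d\le s$ both sides vanish, and for $d\ge s+1$ intersecting with $\hat\fm^{s+1}$ is vacuous, so both sides equal $\bigl(\sum_{\tau\ni\gamma}\langle\ell_\tau^{r+1}\rangle\bigr)_d$. This gives equality in every degree, hence as ideals. The main obstacle is the reduction step rather than the final computation: one must check that the $\ell_\tau$ genuinely lie in $\hat\fm$ and can be turned \emph{simultaneously} into forms in $x,y$, so that a single grading makes $\hat\fm^{s+1}$ a degree-truncation and all the principal ideals homogeneous at once; once that is arranged, the distributivity of intersection over sum is automatic degreewise.
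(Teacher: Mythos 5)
Your proof is correct and rests on the same idea as the paper's own argument: normalize so that $\hat\fm=\langle x,y\rangle$ and every $\ell_\tau$ is a linear form in $x,y$ alone, then exploit the $(x,y)$-degree structure to see that intersecting with $\hat\fm^{s+1}$ commutes with the sum. Your explicit grading ($\deg x=\deg y=1$, $\deg z=0$), under which $\hat\fm^{s+1}$ is a degree truncation and each $\langle\ell_\tau^{r+1}\rangle$ is homogeneous, is simply a cleaner packaging of the paper's step in which a representation $f=\sum_\tau g_\tau\ell_\tau^{r+1}$ is replaced by one with $g_\tau\in\hat\fm^{s-r}$ by discarding the low $(x,y)$-degree parts of the coefficients.
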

	\begin{proof}
		We may assume without loss of generality that $\gamma$ is at the origin and take $\fm=\langle x,y \rangle$. 
		Let $f\in \hat\fm^{s+1} \cap\sum_{\tau\ni\gamma}\langle {\ell}_{\tau}^{r+1}\rangle$, then $f=x^iy^jg$ with $i+j=s+1$  $g\in\tS$, and there exist $g_\tau\in \tS$ such that $f=\sum_{\tau\ni\gamma}g_\tau\ell_\tau^{r+1}$.
		
		Notice that $\ell_\tau^{r+1}\in\fm^{r+1}$ for all edges $\tau$ containing $\gamma$. Since $f\in\hat{\fm}^{s+1}$, then we may assume $g_\tau\in \hat\fm^{s-r}$. (Indeed, we may write $f=\sum_{\tau\ni\gamma}g_\tau\ell_\tau^{r+1}=\sum_{\tau\ni\gamma}\bigl(h_\tau\ell_\tau^{r+1}+q_\tau\ell_\tau^{r+1}\bigr)$ with $h_\tau\in\hat{\fm}^{s+1}$ and either $q_\tau=0$ or $q_\tau\notin\hat{\fm}^{s-r}$; $f\in\hat{\fm}^{s+1}$ implies $\sum_{\tau\ni\gamma}q_\tau\ell_\tau^{r+1}=0$.)
		
		In particular, for each $\tau\ni\gamma$ we have $g_\tau\ell_\tau^{r+1}\in \hat\fm^{s+1} \cap \langle {\ell}_\tau^{r+1}\rangle$. 
		It implies $f\in \sum_{\tau_\ni\gamma} \bigl(\hat\fm^{s+1} \cap \langle {\ell}_\tau^{r+1}\rangle\bigr)$.
		The other containment always holds for any choice of ideals. 
	\end{proof}
	\begin{corollary}\label{cor:maximal}
		Let $\Delta\subseteq\R^2$, $\gamma \in \Delta_0^\circ$, and take $\ell_\tau$ and  $\hat\fm$ as in Lemma \ref{lem:star-vertex-ideal}. Then 
		$\hat\fm^{s+1}=\sum_{\tau\in\Delta_1^\circ}\bigl\langle\ell_\tau^{r+1}\bigr\rangle \cap \hat\fm^{s+1}$ for every $s\geq r+\lfloor\frac{r}{t-1}\rfloor$, where $t$ is the number of distinct linear forms $\ell_\tau$.
	\end{corollary}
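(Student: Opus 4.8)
The plan is to reduce the statement, via Lemma \ref{lem:star-vertex-ideal}, to a single ideal containment, and then to settle that containment by computing the Hilbert function of an ideal generated by powers of distinct linear forms in two variables using Macaulay's inverse systems.

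\textbf{Step 1 (reduction to a containment).} First I would invoke Lemma \ref{lem:star-vertex-ideal}, which rewrites the right-hand side as $\hat\fm^{s+1}\cap\bigl(\sum_{\tau\ni\gamma}\langle\ell_\tau^{r+1}\rangle\bigr)$; the asserted equality is then equivalent to the one containment $\hat\fm^{s+1}\subseteq\sum_{\tau\ni\gamma}\langle\ell_\tau^{r+1}\rangle$, the reverse inclusion being automatic. After placing $\gamma$ at the origin we have $\hat\fm=\langle x,y\rangle\subseteq\tS$, and since each edge through $\gamma$ lies on a line through the origin, its form is $\ell_\tau=a_\tau x+b_\tau y\in\R[x,y]$, with edges spanning distinct lines giving distinct forms; let $t$ be the number of distinct forms. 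Because $\tS=\tR[z]$ is a free extension of $\tR=\R[x,y]$ and every form involved lies in $\tR$, the containment holds in $\tS$ if and only if $\fm^{s+1}\subseteq I$ holds in $\tR$, where $\fm=\langle x,y\rangle$ and $I=\sum_{\tau}\langle\ell_\tau^{r+1}\rangle$. As $I_d=\tR_d$ forces $I_{d+1}=\tR_{d+1}$, it then suffices to verify $(\tR/I)_{s+1}=0$.

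\textbf{Step 2 (Hilbert function via inverse systems).} Next I would compute $(\tR/I)_d$ by Macaulay duality. Identifying $\ell_\tau=a_\tau x+b_\tau y$ with the directional derivative $a_\tau\partial_X+b_\tau\partial_Y$ on the dual ring $\R[X,Y]$, a degree-$d$ form $F$ lies in the inverse system $(I^{-1})_d$ precisely when $(a_\tau\partial_X+b_\tau\partial_Y)^{r+1}F=0$ for every $\tau$, which for $d>r$ holds if and only if $L_\tau^{\,d-r}\mid F$, where $L_\tau=b_\tau X-a_\tau Y$. Since the $\ell_\tau$ are distinct the $L_\tau$ are distinct linear forms, so $F$ must be divisible by $\prod_\tau L_\tau^{\,d-r}$, a form of degree $t(d-r)$. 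Hence $(I^{-1})_d\neq 0$ iff $t(d-r)\leq d$, and dually $(\tR/I)_d=0$ iff $t(d-r)>d$, i.e. $(t-1)d\geq tr+1$, i.e. $d\geq r+\tfrac{r+1}{t-1}$.

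\textbf{Step 3 (arithmetic and conclusion).} Finally, the integer threshold reads $d\geq r+\lceil(r+1)/(t-1)\rceil$, and a one-line computation (writing $r=a(t-1)+b$ with $0\leq b\leq t-2$) gives $\lceil(r+1)/(t-1)\rceil=1+\lfloor r/(t-1)\rfloor$ for every $r\geq0$ and $t\geq 2$. Thus $(\tR/I)_d=0$ exactly for $d\geq s+1$ with $s=r+\lfloor r/(t-1)\rfloor$; in particular $(\tR/I)_{s+1}=0$, which is what Step 1 requires, and the claimed equality follows. The main obstacle is Step 2: pinning down the exact vanishing degree of $\tR/I$ for powers of distinct, but possibly non-generic, linear forms. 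The inverse-system description is what makes this transparent, since in two variables distinctness alone forces divisibility by $\prod_\tau L_\tau^{\,d-r}$, so the Hilbert function depends only on $t$ and $r$ and not on the particular configuration of edges at $\gamma$; alternatively one could cite the known maximal-rank behaviour of ideals of powers of linear forms in $\R[x,y]$.
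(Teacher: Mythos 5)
Your proof is correct, and it shares its overall skeleton with the paper's: both reduce the corollary via Lemma \ref{lem:star-vertex-ideal} to the single containment $\hat\fm^{s+1}\subseteq I$, where $I=\bigl\langle\ell_\tau^{r+1}\colon\tau\ni\gamma\bigr\rangle$, and both obtain that containment from the vanishing of $(\tR/I)_d$ for $d\geq s+1$. The difference lies in how that vanishing is established. The paper does it in one line by citing \cite[Theorem 2.6]{geramita1998fat}: the socle degree of $I$ is exactly $r+\lfloor r/(t-1)\rfloor$, so $\fm^{s+1}\subseteq I$ for all $s\geq r+\lfloor r/(t-1)\rfloor$. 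You instead prove the vanishing from scratch via Macaulay duality: in two variables the condition $\ell_\tau(\partial)^{r+1}F=0$ on a degree-$d$ form forces divisibility by the $(d-r)$-th power of the apolar linear form $L_\tau$, distinctness of the $\ell_\tau$ upgrades this to divisibility by $\prod_\tau L_\tau^{d-r}$, and the degree count $t(d-r)>d$ together with the identity $\lceil(r+1)/(t-1)\rceil=1+\lfloor r/(t-1)\rfloor$ recovers precisely the paper's threshold. This is best described as a self-contained reproof of the special case of the cited result that the corollary needs, carried out with the same technique (inverse systems) that underlies Geramita--Schenck's theorem. What your route buys is transparency and sharpness: it makes visible why the bound is configuration-independent (the Hilbert function of $\tR/I$ depends only on $t$ and $r$) and shows the threshold cannot be lowered, at the cost of roughly a page of computation that the paper outsources to a citation. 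One small point common to both arguments: the quantity $\lfloor r/(t-1)\rfloor$ presupposes $t\geq 2$, which holds because $\gamma$ is an interior vertex; it would be worth stating this explicitly, as your Step 3 does implicitly when it assumes $t\geq 2$.
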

	\begin{proof}
		Assume, without loss of generality, that $\gamma$ is at the origin of $\R^2$. Thus, the linear forms $\ell_\tau$ are polynomials in two variables in $\fm$.
		By \cite[Theorem 2.6]{geramita1998fat}, the socle degree of the ideal $\bigr\langle\ell_\tau^{r+1}\colon \tau \ni \gamma\bigr\rangle$ is $\displaystyle r+\biggl\lfloor\frac{r}{t-1}\biggr\rfloor$.
		Then, the hypothesis $\displaystyle s\geq r+\frac{r}{t-1}$ implies that $\hat\fm^{s+1}\subseteq\bigl\langle\ell_\tau^{r+1}\colon \tau \ni \gamma\bigr\rangle$, and this in combination with Lemma \ref{lem:star-vertex-ideal}  yields the equality. 
	\end{proof}
	Similarly to the two cases of ideals $\tJ(\tau)$ we considered in Lemmas \ref{lem:dim_edge_ideal} and \ref{lem:dimEborder}, we now compute the dimension of the ideal $\tJ(\gamma)$ in \eqref{eq:vertexone-s}.
	
	\begin{lemma}\label{lemma:dimvertex}
		Let $\Delta\subseteq\R^2$, $0\leq r\leq s\leq d$ be integers, and $\gamma\in\Delta_0^\circ$ a vertex. If $t$ is the number of edges $\tau\in\Delta_1^\circ$ with different slopes containing $\gamma$ and $\tJ(\gamma)=\langle\ell^{r+1}_\tau\colon\tau\ni\gamma,\tau\in\Delta_1^\circ\rangle\cap \hat{\fm}^{s+1}$, then 
		\begin{equation*}
			\dim (\tS/\tJ(\gamma))_d
			=\begin{cases} \binom{d+2}{2}-\frac{t}{2}(d-s)(d+s-2r+1)+b\binom{d+2-\Omega}{2}+a\binom{d+1-\Omega}{2}&\mbox{ if } s<\Omega-1,\\
				\binom{s+2}{2}	&\mbox{otherwise},
			\end{cases}
		\end{equation*}
		where  $\Omega= \lfloor \frac{tr}{t-1}\rfloor +1$, $a= t(r+1)+(1-t)\Omega$, and $b=t-a-1$.
	\end{lemma}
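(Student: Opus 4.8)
The plan is to strip away the homogenizing variable and reduce the statement to a Hilbert-function count for an Artinian ideal in two variables. After a linear change of coordinates I would place $\gamma$ at the origin, so that $\hat\fm=\langle x,y\rangle$ and every linear form $\ell_\tau$ with $\tau\ni\gamma$ becomes a homogeneous linear form in $x,y$ alone (no $z$ appears). Writing $A=\R[x,y]$, $\fm=\langle x,y\rangle\subseteq A$ and $I=\langle\ell_\tau^{r+1}\colon\tau\ni\gamma\rangle\subseteq A$, all generators of $\tJ(\gamma)$ lie in $A$, so $\tJ(\gamma)=(I\cap\fm^{s+1})\tS$ and, since $z$ is a nonzerodivisor, $\tS/\tJ(\gamma)\cong\bigl(A/(I\cap\fm^{s+1})\bigr)[z]$ as graded $\R$-algebras. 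This identification gives
\[
\dim\bigl(\tS/\tJ(\gamma)\bigr)_d=\sum_{k=0}^{d}\dim\bigl(A/(I\cap\fm^{s+1})\bigr)_k,
\]
which turns the problem into summing the Hilbert function of a graded Artinian quotient of $\R[x,y]$.

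Next I would compute the Hilbert function of $B=A/(I\cap\fm^{s+1})$. Because $(\fm^{s+1})_k=0$ for $k\le s$ and $(\fm^{s+1})_k=A_k$ for $k\ge s+1$, the graded piece $(I\cap\fm^{s+1})_k$ equals $0$ for $k\le s$ and equals $I_k$ for $k\ge s+1$; hence $\dim B_k=k+1$ for $k\le s$ and $\dim B_k=\dim(A/I)_k$ for $k\ge s+1$. For the ideal $I$ of the $(r+1)$-st powers of $t$ distinct linear forms in two variables I would use that $A/I$ has the expected (maximal) Hilbert function, namely $\dim(A/I)_k=\max\bigl(0,(k+1)-t(k-r)\bigr)$ for $k\ge r$, whose last nonzero value occurs in degree $\Omega-1=r+\lfloor r/(t-1)\rfloor$. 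The location of this top degree is exactly the socle-degree computation underlying Corollary~\ref{cor:maximal} (via \cite[Theorem~2.6]{geramita1998fat}).

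With these inputs the two cases fall out by summation. If $s\ge\Omega-1$, then $\fm^{s+1}\subseteq I$, so $\dim B_k=0$ for $k\ge s+1$ and the sum collapses to $\sum_{k=0}^{s}(k+1)=\binom{s+2}{2}$, giving the second branch. If $s<\Omega-1$, I would write $\dim(\tS/\tJ(\gamma))_d=\binom{d+2}{2}-\sum_{k=s+1}^{d}\dim I_k$ and split the range at $\Omega$: for $s+1\le k\le\Omega-1$ one has $\dim I_k=t(k-r)$, and for $k\ge\Omega$ one has $\dim I_k=k+1$. The first portion contributes $\sum_{k=s+1}^{d}t(k-r)=\tfrac{t}{2}(d-s)(d+s-2r+1)$, while replacing $t(k-r)$ by $k+1$ on the tail $k\ge\Omega$ produces the correction $\sum_{k=\Omega}^{d}\bigl[t(k-r)-(k+1)\bigr]$. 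Re-indexing by $j=k-\Omega$ evaluates this correction to $(t-1)\binom{d+1-\Omega}{2}+b\,(d+1-\Omega)$ with $b=(t-1)\Omega-tr-1$, and the elementary identity $a+b=t-1$ (equivalently $a=t(r+1)+(1-t)\Omega$, $b=t-a-1$) rewrites it precisely as $b\binom{d+2-\Omega}{2}+a\binom{d+1-\Omega}{2}$. The convention $\binom{n}{2}=0$ for $n<2$ then makes the formula valid for every $d\ge s$, including the small-degree range where the tail is empty.

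The one genuinely delicate step is the exact Hilbert function of $A/I$ in all intermediate degrees, i.e.\ that the powers $\ell_\tau^{r+1}$ of the $t$ distinct linear forms impose the maximal number of conditions in each degree up to $\Omega-1$, not merely that the socle lives in degree $\Omega-1$. I expect to justify this through the codimension-two (two-variable) structure: powers of distinct linear forms correspond to distinct points of $\PP^1$, which are automatically in linearly general position (a Vandermonde argument), so $A/I$ realizes the expected Hilbert series, and Corollary~\ref{cor:maximal} then pins down the precise degree in which it vanishes. Once the Hilbert function is in hand, the remainder is the bookkeeping summarized above, with the verification $a+b=t-1$ being the only nontrivial algebraic check.
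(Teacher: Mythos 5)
Your proposal is correct and follows essentially the same route as the paper: place $\gamma$ at the origin, reduce to the two-variable ideal $I\cap\fm^{s+1}$ via $\tS/\tJ(\gamma)\cong \bigl(A/(I\cap\fm^{s+1})\bigr)[z]$, and invoke the known Hilbert function of $\langle \ell_\tau^{r+1}\rangle$ in $\R[x,y]$ together with the socle-degree fact $\Omega-1=r+\lfloor r/(t-1)\rfloor$ to split into the two cases (the paper cites the Geramita--Schenck resolution \cite[Theorem 2.7]{geramita1998fat} for this input, where you re-derive the expected Hilbert function by apolarity). The only difference is bookkeeping: the paper evaluates the resolution-based formula at $k=d$ and $k=s$ and subtracts, whereas you sum the Hilbert function over $s+1\leq k\leq d$ and split the sum at $\Omega$ --- the same computation carried out in a different order.
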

	\begin{proof}
		We can assume without loss of generality that $\gamma$ is at the origin of $\R^2$. Put $\tI(\gamma)=\langle\ell_\tau^{r+1}\colon \tau \ni \gamma\rangle\subseteq \R[x,y]$. 
		Then, $\fm=\langle x, y\rangle$ and $\tI(\gamma)\subseteq \fm$. We can write $\tJ(\gamma)=\bigl(\tI(\gamma)\cap \fm^{s+1}\bigr)\otimes \R[z]$, and so  $\tJ(\gamma)_{d}=(\tI(\gamma)_{\geq s+1}\otimes\R[z])_d$. 
		Hence $\dim (\tS/\tJ(\gamma))_d=\dim \tS_d-\sum_{k=s+1}^d\dim \tI(\gamma)_k=\dim \tS_d-\dim (\tI(\gamma)\otimes\R[z])_d+\dim (\tI(\gamma)\otimes\R[z])_s$.
		By the resolution of the ideal $\tI(\gamma)$ in \cite[Theorem 2.7]{geramita1998fat}, for any $k\geq r$ we have
		\begin{equation}\label{eq:vertexdim}
			\dim (\tS/\tI(\gamma)\otimes\R[z])_k = \binom{k+2}{2}-t\binom{k-r+1}{2} + b\binom{k+2-\Omega}{2}+ a\binom{k-\Omega+1}{2}. 
		\end{equation}
		The statement follows directly by applying \eqref{eq:vertexdim} with $k=d$ and $k=s$. Notice that if $s=k<\Omega-1$ the terms in $b$ and $a$ in \eqref{eq:vertexdim} vanish; 
		if $s\geq \Omega-1$, by  Corollary \ref{cor:maximal} we have $\tJ(\gamma)=\hat\fm^{s+1}$, 
		and  $\dim (\hat\fm^{s+1})_d= \binom{d+2}{2}-\binom{s+2}{2}$.
	\end{proof}	
	\subsection{Supersplines on vertex stars}\label{sec:stars}
	We devote this section to triangulation $\Delta\subseteq \R^2$ which are the star of a vertex i.e., all the triangles $\sigma\in\Delta$ share a common vertex $\gamma$.
	In this case, we write $\Delta=\St(\gamma)$ and say that $\Delta$ is a vertex star, or the star of the vertex $\gamma$.
	
	As before, we take integers  $0\leq r\leq s\leq d$. 
	We write $\calS_d^{r,s}(\Delta^\circ)$ for the set of $C^r$-splines on a vertex star $\Delta=\St(\gamma)$ with supersmoothness $s \geq r$ at the vertex $\gamma$.
	In terms of Definition \ref{def:supersplines}, we have
	$\calS_d^{r,s}(\Delta^\circ)
	= \calS_d^{r,\bm{s}}(\Delta)$ where $\bm{s}$ is given by $s_\gamma=s$, and $s_{\gamma'}=r$ for all $\gamma'\in\partial\Delta$.
	In particular, for each interior edge $[\gamma,\gamma']=\tau\in\Delta_1^\circ$ the ideal $\tJ(\tau)=\langle\ell_{\tau}^{r+1}\rangle\cap \hat\fm_{\gamma}^{s+1}\cap \hat\fm_{\gamma'}^{r+1}=\langle\ell_{\tau}^{r+1}\rangle\cap \hat\fm_{\gamma}^{s+1}$ as in Lemma \ref{lem:dimEborder}.
	\begin{theorem}\label{theo:vetexstardim}
		Let $0\leq r\leq s \leq d$ be integers. If $\Delta=\St(\gamma)$ and $\gamma$ is an interior vertex, then 
		\begin{multline*}
			\dim \calS_d^{r,s}(\Delta^\circ) = \binom{d+2}{2}+f_1^\circ (s-r+1)\binom{d-s+1}{2}-f_1^\circ(s-r)\binom{d-s}{2}  - \dim \tJ(\gamma) 
		\end{multline*}
		where $f_1^\circ$ is the number of interior edges of $\Delta$ and $\dim\tJ(\gamma)$ is given in Lemma \ref{lemma:dimvertex}.
	\end{theorem}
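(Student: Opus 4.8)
The plan is to specialize the disk dimension formula \eqref{eq:dimformuladisk} to the vertex star $\Delta=\St(\gamma)$ and to show that the homological term $\dim H_0(\calJ)_d$ vanishes, which is the reason it is absent from the claimed identity. Since $\gamma$ is an interior vertex, $\Delta$ is homeomorphic to a disk, so \eqref{eq:dimformuladisk} applies and reads
\begin{equation*}
	\dim \calS_d^{r,s}(\Delta^\circ)=\binom{d+2}{2}+\sum_{\tau\in\Delta_1^\circ}\dim\tJ(\tau)_d-\sum_{\gamma'\in\Delta_0^\circ}\dim\tJ(\gamma')_d+\dim H_0(\calJ)_d\,.
\end{equation*}
The first simplification is that $\gamma$ is the \emph{only} interior vertex, so the vertex sum collapses to the single term $\dim\tJ(\gamma)_d$, with $\tJ(\gamma)$ as in \eqref{eq:vertexone-s} and its dimension supplied by Lemma \ref{lemma:dimvertex}. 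Moreover every interior edge is a spoke $\tau=[\gamma,\gamma']$ joining $\gamma$ to a boundary vertex $\gamma'$, carrying supersmoothness $s$ at $\gamma$ and smoothness $r$ at $\gamma'$; hence each edge ideal is exactly the one treated in Lemma \ref{lem:dimEborder}, giving $\dim\tJ(\tau)_d=(s-r+1)\binom{d-s+1}{2}-(s-r)\binom{d-s}{2}$. Summing over the $f_1^\circ$ spokes reproduces precisely the two middle terms of the claimed formula.

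The crux is to prove $H_0(\calJ)_d=0$. Recall from \eqref{eq:complexJ} that for $n=2$ the complex is $\calJ\colon 0\to\bigoplus_{\tau\in\Delta_1^\circ}\tJ(\tau)\xrightarrow{\partial_1}\bigoplus_{\gamma'\in\Delta_0^\circ}\tJ(\gamma')\to 0$, so $H_0(\calJ)=\operatorname{coker}(\partial_1)$. In the vertex star the target is the single summand $\tJ(\gamma)$. Since each spoke $\tau=[\gamma,\gamma']$ has its second vertex $\gamma'$ on $\partial\Delta$, the relative simplicial boundary map kills the $\gamma'$-component and sends $f[\tau]$ (with $f\in\tJ(\tau)$) to $\pm f[\gamma]$. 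Therefore $\im(\partial_1)=\sum_{\tau\ni\gamma}\tJ(\tau)$ viewed inside $\tJ(\gamma)$. But $\tJ(\gamma)$ is defined in \eqref{eq:vertexone-s} as exactly this sum, so $\partial_1$ is surjective and $H_0(\calJ)=0$; in particular $\dim H_0(\calJ)_d=0$.

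Substituting $\dim H_0(\calJ)_d=0$, the edge computation from Lemma \ref{lem:dimEborder}, and the single-vertex term $\dim\tJ(\gamma)_d$ into the specialized formula yields the stated identity. The only step that genuinely requires attention is the surjectivity of $\partial_1$; this is immediate here only because the defining relation $\tJ(\gamma)=\sum_{\tau\ni\gamma}\tJ(\tau)$ coincides with the image of the relative boundary map, a simplification particular to the single-interior-vertex geometry. For a general disk one would instead have to track a possibly nonzero $H_0(\calJ)$, so the vanishing is exactly the feature that makes the vertex-star dimension purely a sum of local edge and vertex contributions.
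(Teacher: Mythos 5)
Your proposal is correct and is essentially the paper's own argument: the paper likewise reduces to the Euler--Poincar\'e characteristic of the chain complex on the vertex star, with the vanishing $H_0(\calJ)_d=0$ left implicit in the step where the characteristic ``leads to'' the formula --- implicit precisely because $\tJ(\gamma)=\sum_{\tau\ni\gamma}\tJ(\tau)=\im(\partial_1)$, which is exactly the surjectivity point you spell out. As a minor remark, your citation of Lemma \ref{lem:dimEborder} for the spoke ideals is the right one (the paper's proof invokes Lemma \ref{lem:dim_edge_ideal}, evidently a slip, since each interior edge carries supersmoothness $s$ only at $\gamma$ and smoothness $r$ at its boundary vertex).
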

	\begin{proof}
		Put $\tJ(\tau)=\langle\ell_\tau^{r+1}\rangle\cap\hat\fm^{s+1}$ and $\tJ(\gamma)=\sum\limits_{\tau\in\Delta_1^\circ}\tJ(\tau)$.
		Consider the complex 
		\begin{equation}\label{eq:starcomplex}
			0
			\rightarrow\bigoplus_{\sigma\in\Delta} \tS[\sigma]
			\xrightarrow{\overline{\partial}_2}\bigoplus_{\tau\in\Delta_1^\circ} \tS/\tJ(\tau)
			\xrightarrow{\overline{\partial}_1}\tS/\tJ(\gamma)
			\rightarrow 0\,.
		\end{equation}
		Using similar arguments to those in Corollary \ref{cor:kernelcomplexR/J} and Proposition \ref{thm:spline_space_isomorphism}, we get $\dim \calS_d^{r,s}(\Delta^\circ) = \ker(\overline{\partial}_2)$.
		The Euler-Poincar\'e characteristic of the complex \eqref{eq:starcomplex} leads to $\dim  \calS_d^{r,s}(\Delta^\circ) = \binom{d+2}{2} +\dim \sum_{\tau\in\Delta_1^\circ}(\tJ(\tau))_d - \dim (\tJ(\gamma))_d $. 
		The formula in the statement follows by applying Lemma \ref{lem:dim_edge_ideal} and Lemma \ref{lemma:dimvertex} to the previous equality.
	\end{proof}
	\begin{corollary}\label{cor:dimstar_socle}
		Let $\Delta$ be as in Theorem \ref{theo:vetexstardim}, and $t$ be the number of edges with different slopes containing $\gamma$ as a vertex. If  $s\geq r +\lfloor\frac{r}{t-1}\rfloor$, then 
		\begin{equation*}
			\dim \calS_d^{r,s}(\Delta^\circ) = f_1^\circ (s-r+1)\binom{d-s+1}{2}-f_1^\circ(s-r)\binom{d-s}{2} +\binom{s+2}{2}\,. 
		\end{equation*}
	\end{corollary}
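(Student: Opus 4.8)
The plan is to obtain the corollary as a direct specialization of Theorem~\ref{theo:vetexstardim} to the range of supersmoothness in which the vertex ideal collapses to a full power of the maximal ideal. Throughout I place $\gamma$ at the origin, so that $\hat\fm=\langle x,y\rangle\subseteq\tS$ and $\tJ(\gamma)=\langle\ell_\tau^{r+1}\colon\tau\ni\gamma\rangle\cap\hat\fm^{s+1}$ exactly as in Lemma~\ref{lemma:dimvertex}.

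First I would reconcile the hypothesis with the branch condition appearing in Lemma~\ref{lemma:dimvertex}. Writing $\Omega=\lfloor\tfrac{tr}{t-1}\rfloor+1$ and using the identity $\tfrac{tr}{t-1}=r+\tfrac{r}{t-1}$ together with the integrality of $r$, one has $\lfloor\tfrac{tr}{t-1}\rfloor=r+\lfloor\tfrac{r}{t-1}\rfloor$. Hence the assumption $s\geq r+\lfloor\tfrac{r}{t-1}\rfloor$ is precisely the condition $s\geq\Omega-1$ that selects the ``otherwise'' case of Lemma~\ref{lemma:dimvertex}; equivalently, it is the hypothesis of Corollary~\ref{cor:maximal}, which yields $\tJ(\gamma)=\hat\fm^{s+1}$.

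Next I would evaluate the graded dimension in this regime. Since $\tJ(\gamma)=\hat\fm^{s+1}=\langle x,y\rangle^{s+1}$, a monomial basis of $(\tS/\tJ(\gamma))_d$ is given by the $x^iy^jz^k$ with $i+j+k=d$ and $i+j\leq s$; as $s\leq d$, counting these gives $\dim(\tS/\tJ(\gamma))_d=\sum_{m=0}^{s}(m+1)=\binom{s+2}{2}$, in agreement with the ``otherwise'' branch of Lemma~\ref{lemma:dimvertex}. Passing to the ideal itself, $\dim(\tJ(\gamma))_d=\binom{d+2}{2}-\binom{s+2}{2}$.

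Finally I would substitute this value into the formula of Theorem~\ref{theo:vetexstardim}, in which $\dim\tJ(\gamma)$ denotes the graded piece $\dim(\tJ(\gamma))_d$. The two occurrences of $\binom{d+2}{2}$ then cancel, leaving $\dim\calS_d^{r,s}(\Delta^\circ)=f_1^\circ(s-r+1)\binom{d-s+1}{2}-f_1^\circ(s-r)\binom{d-s}{2}+\binom{s+2}{2}$, as claimed. No genuine obstacle arises here, since the substantive content is already contained in Corollary~\ref{cor:maximal} and Lemma~\ref{lemma:dimvertex}. The only point requiring care is the bookkeeping that matches the two descriptions of the threshold and that correctly reads $\dim\tJ(\gamma)$ in Theorem~\ref{theo:vetexstardim} as the dimension of the graded component of the \emph{ideal}, complementary to the quotient dimension computed in the lemma.
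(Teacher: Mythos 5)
Your proof is correct and follows essentially the same route as the paper's: the paper's proof is a one-line citation of Theorem \ref{theo:vetexstardim} together with the case $s\geq\Omega-1$ of Lemma \ref{lemma:dimvertex}, which is exactly the specialization you carry out. Your added details---checking that $s\geq r+\lfloor\frac{r}{t-1}\rfloor$ is equivalent to $s\geq\Omega-1$ via $\frac{tr}{t-1}=r+\frac{r}{t-1}$, and converting the quotient dimension $\binom{s+2}{2}$ of Lemma \ref{lemma:dimvertex} into the ideal dimension $\binom{d+2}{2}-\binom{s+2}{2}$ before substituting into Theorem \ref{theo:vetexstardim}---are precisely the bookkeeping steps the paper leaves implicit.
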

	\begin{proof}
		The formula follows by  Theorem \ref{theo:vetexstardim} and the case $s\geq \Omega-1$ in Lemma \ref{lemma:dimvertex}.
	\end{proof}
	\section{A lower bound on the dimension of superspline spaces on triangulations}\label{sec:bounds}
	Throughout this section we assume $\Delta$ is a pure and hereditary simplicial complex in $\R^2$  isomorphic to a disk. 
	
	Since $\dim H_0(\calJ)_d\geq 0$ for any degree $d\geq 0$,  then by Equation \eqref{eq:dimformuladisk} for any choice of smoothness $\br=\{r_\tau\colon \tau\in\Delta_1^\circ\}$ and supersmoothness $\bs=\{s_\gamma\colon \gamma\in\Delta_0\}$ we have   
	\begin{equation}\label{eq:lowerbound}
		\dim \calS_d^{\br,\bs}(\Delta)\geq \binom{d+2}{2} + \sum_{\tau\in\Delta_1^\circ}\dim \tJ(\tau)_d - \sum_{\gamma\in\Delta_0^\circ}\dim \tJ(\gamma)_d\,.
	\end{equation}
	{
		In fact, similarly to the case of splines with global uniform smoothness $r$, it can be shown that the homology module $H_0(\calJ)$ has finite length i.e., $H_0(\calJ)_d=0$ for degree  $d \gg 0$. 
		The proof of this result follows by a straightforward application of the ideas in \cite[Lemma 3.2]{schenck1997local}. 
		\begin{lemma}\label{lemma:homology}
			Let $\Delta\subseteq \R^2$ and $\calJ$ be the complex of ideals  associated to $\br=\{r_\tau\colon \tau\in\Delta_1^\circ\}$ and $\bs=\{s_\gamma\colon \gamma\in\Delta_0\}$. Then, $H_0(\calJ)_d=0$ for all $d\gg 0$. 
		\end{lemma}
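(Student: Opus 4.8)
The plan is to show that $H_0(\calJ)$ is a graded $\tS$-module of finite length; since a finitely generated graded module over $\tS=\R[x,y,z]$ has vanishing graded pieces in all sufficiently large degrees as soon as its support is contained in the irrelevant maximal ideal $\mathfrak{m}=\langle x,y,z\rangle$, this at once yields $H_0(\calJ)_d=0$ for $d\gg 0$. Concretely, $H_0(\calJ)=\mathrm{coker}\bigl(\partial_1\colon \bigoplus_{\tau\in\Delta_1^\circ}\tJ(\tau)\to\bigoplus_{\gamma\in\Delta_0^\circ}\tJ(\gamma)\bigr)$ is finitely generated and graded, so it suffices to prove that $H_0(\calJ)_P=0$ for every homogeneous prime $P\neq\mathfrak{m}$, i.e. that the localized boundary map $(\partial_1)_P$ is surjective whenever the variety $W$ cut out by $P$ is a cone of positive dimension. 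This is the strategy of \cite[Lemma 3.2]{schenck1997local}, adapted to the superspline ideals.

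The key bookkeeping is the support of each ideal. Since $\V(\tJ(\tau))=\hat\tau$ is the plane spanned by $\hat\tau$, and $\V(\tJ(\gamma))=\hat\gamma$ is the line through $\hat\gamma$ (an interior vertex meets at least two edges of different slopes), localization gives $\tJ(\tau)_P=\tS_P$ unless $W\subseteq\hat\tau$, and $\tJ(\gamma)_P=\tS_P$ unless $W\subseteq\hat\gamma$. The crucial algebraic input is that localization commutes with the finite sum in \eqref{eq:ifacesgen}, so that $\tJ(\gamma)_P=\sum_{\tau\ni\gamma}\tJ(\tau)_P$ for every $P$; this is exactly what lets the edges incident to $\gamma$ cover the $\gamma$-component of the target after localizing. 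I would then run a case analysis on $W$.

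First, if $W$ is not contained in the line $\hat\gamma$ of any interior vertex, then every target component localizes to $\tS_P$, and surjectivity of $(\partial_1)_P$ reduces to surjectivity of the ordinary relative simplicial boundary map built from those edges $\tau$ with $W\not\subseteq\hat\tau$ (the remaining edges only help). Because $\Delta$ is a disk, these full edges still connect every interior vertex to $\partial\Delta$: an interior vertex could lose all of its edges only if $W\subseteq\bigcap_{\tau\ni\gamma}\hat\tau=\hat\gamma$, which is excluded here, so the relative $H_0$ of the remaining graph vanishes and $(\partial_1)_P$ is onto. Second, if $W=\hat\gamma$ for some interior vertex $\gamma$, the only non-full target component is $\tJ(\gamma)_P$, while the full edges are exactly those not incident to $\gamma$. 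Given a target element I would first use $\tJ(\gamma)_P=\sum_{\tau\ni\gamma}\tJ(\tau)_P$ to hit the $\gamma$-slot with edges incident to $\gamma$, push the resulting spillover into the neighbouring full components, and then clear those remaining $\tS_P$-components using the edges away from $\gamma$, invoking once more that a disk minus a single interior vertex stays connected to its boundary. In every case $(\partial_1)_P$ is surjective, so $H_0(\calJ)_P=0$.

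The main obstacle is the vertex-line case $W=\hat\gamma$, since this is where the supersmoothness orders $s_\gamma$ genuinely enter through the enlarged ideal $\tJ(\gamma)$; however the identity $\tJ(\gamma)=\sum_{\tau\ni\gamma}\tJ(\tau)$, preserved under localization, is precisely tailored so that the incident edge-ideals surject onto the localized vertex-ideal, after which the disk topology disposes of the remaining free components. Having shown $\mathrm{Supp}\,H_0(\calJ)\subseteq\{\mathfrak{m}\}$, the module has finite length and therefore $H_0(\calJ)_d=0$ for all $d\gg 0$, as claimed.
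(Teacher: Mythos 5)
Your overall strategy---proving $\mathrm{Supp}\,H_0(\calJ)\subseteq\{\langle x,y,z\rangle\}$ by localizing at homogeneous primes $P\neq\langle x,y,z\rangle$ and checking surjectivity of $(\partial_1)_P$---is sound, and it is genuinely different from the paper's proof, which instead constructs explicit annihilators (powers of $\ell_{\tau'}$, $\ell_{\tau''}$ and of $p=\prod_{\tau\in\lk(\gamma)}\ell_\tau$) for each class $f[\gamma]$ by induction along the vertex ordering of Lemma~\ref{lemma:ordering}, and then uses $\langle x,y,z\rangle^M\subseteq\langle\ell_{\tau'},\ell_{\tau''},p\rangle$. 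However, there is a genuine gap at the crux of both of your cases: the graph-connectivity assertions are asserted, not proved, and the justifications you offer do not imply them. In your first case, from ``every interior vertex retains at least one full edge'' you conclude that ``the relative $H_0$ of the remaining graph vanishes''; this is a non sequitur, since per-vertex edge retention does not rule out a cluster of interior vertices forming a connected component of the full-edge graph that never reaches $\partial\Delta$---ruling this out is precisely what must be proved. (Note also that a homogeneous prime $P$ containing two independent linear forms equals the ideal $\hat\fm_q$ of a point $q$ that need not be an interior vertex: $q$ can be a boundary vertex, any other point of the plane, or a point at infinity, and then the discarded edges are \emph{all} edges whose affine span passes through $q$; your case split does not address this configuration.) In your second case, $P=\hat\fm_{\gamma_0}$ with $\gamma_0$ interior, there is in addition a factual error: the full edges are \emph{not} ``exactly those not incident to $\gamma_0$''; any edge collinear with $\gamma_0$ (affine span through $\gamma_0$) also fails to localize to $\tS_P$. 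Consequently the topological fact you invoke---a disk minus an interior vertex stays connected to its boundary---concerns the underlying space, not the graph of edges you are actually allowed to use, and does not close the argument.

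Both gaps can be repaired with the same tool the paper's proof uses, namely Lemma~\ref{lemma:ordering}: order the vertices so that each interior vertex $\gamma$ has edges $\tau'$, $\tau''$ of different slopes leading to smaller-or-boundary vertices. If both $\ell_{\tau'}$ and $\ell_{\tau''}$ were in $P$, then $P\supseteq\langle\ell_{\tau'},\ell_{\tau''}\rangle=\hat\fm_\gamma$, which is excluded in your first case and forces $\gamma=\gamma_0$ in your second; hence every interior vertex (respectively, every interior vertex $\neq\gamma_0$) has a \emph{full} edge to a strictly smaller vertex or to $\partial\Delta$, and in the second case this full edge cannot end at $\gamma_0$, because every edge incident to $\gamma_0$ is non-full. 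Induction along the ordering then yields exactly the connectivity statements you need, after which your scheme---hit the $\gamma_0$-slot via $\tJ(\gamma_0)_P=\sum_{\tau\ni\gamma_0}\tJ(\tau)_P$, push the spillover to neighbouring slots, and clear all remaining $\tS_P$-components through full edges---does complete the proof.
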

		\begin{proof}  
			For $\tau\in\Delta_1^\circ$, by definition \eqref{eq:edgesgen}, the ideal $\tJ(\tau)= \langle \ell_\tau^{r+1}\rangle\cap \hat\fm_\gamma^{s_\gamma+1}\cap \hat\fm_{\gamma'}^{s_{\gamma'}+1}$, where $\hat\fm_{\gamma}$ and $\hat\fm_{\gamma'}$ are the ideals of polynomials in $\tS$ vanishing at $\hat{\gamma}$ and $\hat{\gamma}'$, respectively. 
			If $s=\max\{s_{\gamma'}, s_{\gamma},r_\tau\}$, then in particular  $\ell_{\tau'}^{s+1}\in\tJ(\tau)$. 
			After this observation, one can show that $H_0(\calJ)$ is of finite length following the same steps as in the proof of \cite[Lemma 3.2]{schenck1997local}. (See Appendix \ref{appendix}.)
		\end{proof}	
		Lemma \ref{lemma:homology} and Equation \eqref{eq:dimformuladisk} lead directly to the following theorem.	
		\begin{theorem}\label{thm:lowerbound_full}
			If $\Delta\subseteq \R^2$, and $d \gg 0$, equality holds in \eqref{eq:lowerbound}, i.e.,
			\begin{equation*}
				\dim \calS_d^{\br,\bs}(\Delta) = \binom{d+2}{2} + \sum_{\tau\in\Delta_1^\circ}\dim \tJ(\tau)_d - \sum_{\gamma\in\Delta_0^\circ}\dim \tJ(\gamma)_d\,.
			\end{equation*}
		\end{theorem}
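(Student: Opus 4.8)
The plan is to read off the claimed equality directly from the exact dimension formula \eqref{eq:dimformuladisk}, once the homological error term appearing there is shown to vanish in high degree. Recall that \eqref{eq:dimformuladisk} was established for $\Delta\subseteq\R^2$ homeomorphic to a disk by applying the Euler--Poincar\'e characteristic to the quotient complex $\calR/\calJ$: Corollary \ref{cor:kernelcomplexR/J} together with Proposition \ref{thm:spline_space_isomorphism} identify $\calS^{\br,\bs}_d(\Delta)$ with $H_2(\calR/\calJ)_d$, while the disk hypothesis forces $H_0(\calR/\calJ)=0$ and $H_1(\calR/\calJ)\cong H_0(\calJ)$. Consequently the only term in \eqref{eq:dimformuladisk} that separates $\dim\calS_d^{\br,\bs}(\Delta)$ from the right-hand side of the bound \eqref{eq:lowerbound} is the nonnegative quantity $\dim H_0(\calJ)_d$.

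First I would invoke Lemma \ref{lemma:homology}, which asserts that $H_0(\calJ)$ has finite length, so that $\dim H_0(\calJ)_d=0$ for all $d\gg 0$. Substituting this vanishing into \eqref{eq:dimformuladisk} immediately yields
\begin{equation*}
	\dim \calS_d^{\br,\bs}(\Delta) = \binom{d+2}{2} + \sum_{\tau\in\Delta_1^\circ}\dim \tJ(\tau)_d - \sum_{\gamma\in\Delta_0^\circ}\dim \tJ(\gamma)_d\,,
\end{equation*}
which is precisely the asserted equality. Thus, modulo Lemma \ref{lemma:homology}, the theorem is a one-line consequence of the already-derived dimension formula.

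The substance of the argument therefore lies entirely in Lemma \ref{lemma:homology}, and that is where I expect the main obstacle. The task is to adapt the finite-length proof of \cite[Lemma 3.2]{schenck1997local}, written for the uniform-smoothness edge ideals $\langle\ell_\tau^{r+1}\rangle$, to the mixed superspline ideals $\tJ(\tau)=\langle\ell_\tau^{r_\tau+1}\rangle\cap\hat\fm_\gamma^{s_\gamma+1}\cap\hat\fm_{\gamma'}^{s_{\gamma'}+1}$. The key structural observation, recorded in the proof of that lemma, is that upon setting $s=\max\{s_\gamma,s_{\gamma'},r_\tau\}$ one has $\ell_\tau^{s+1}\in\tJ(\tau)$; that is, each superspline edge ideal still contains a fixed high power of its own defining linear form. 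This is exactly the feature that the Schenck--Stillman argument exploits to bound the support of the top homology of the ideal complex, so once it is in place the degree estimate controlling $H_0(\calJ)$ carries over essentially verbatim, completing the proof of Lemma \ref{lemma:homology} and hence of the theorem.
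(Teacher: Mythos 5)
Your proposal is correct and follows exactly the paper's own route: Theorem \ref{thm:lowerbound_full} is obtained by combining the dimension formula \eqref{eq:dimformuladisk} with the vanishing of $H_0(\calJ)_d$ for $d\gg 0$ from Lemma \ref{lemma:homology}, whose proof in turn rests on the observation that $\ell_\tau^{s+1}\in\tJ(\tau)$ for $s=\max\{s_\gamma,s_{\gamma'},r_\tau\}$ so that the Schenck--Stillman finite-length argument carries over. You have identified both the one-line deduction and the genuine content (the lemma) precisely as the paper does.
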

		We now use the results on vertex stars in Section \ref{sec:stars}, and prove a lower bound formula on $\dim \calS_d^{\br,\bs}(\Delta)$ for any $d\geq 0$. 
		First, for a vertex $\gamma\in\Delta_0^\circ$, we 
		define  the ideal 
		\begin{equation}\label{eq:barideal}
			\overline\tJ(\gamma) = \sum_{\tau\ni \gamma}\langle \ell_\tau^{r_\tau+1}\rangle\cap \hat\fm_\gamma^{s_\gamma+1},
		\end{equation}
		where, as before, $\hat\fm_{\gamma}\subseteq \tS$ is the ideal of polynomials vanishing at $\hat{\gamma}$.
		
		The following lemma relates dimension of the ideals $\overline{\tJ}(\gamma)$ and $\tJ(\gamma)$ in degree $d$.
		We show this result following the ideas in the proof of \cite[Lemma 3.2]{schenck1997local}. 
		
		Recall that the \emph{link} of a vertex $\gamma$ in $\Delta\subseteq \R^2$, denoted  $\lk(\gamma)$, is the set of all edges (and their vertices) in $\St(\gamma)$ which do not contain $\gamma$. 
		\begin{lemma}\label{lem:simpler_ideal}
			Let $\Delta\subseteq \R^2$, and $\gamma \in \Delta_0^\circ$. Then,   $\dim \tJ(\gamma)_d \leq \dim \overline{\tJ}(\gamma)_d$ for every $d \geq 0$, equality holds if $d\gg 0$.
		\end{lemma}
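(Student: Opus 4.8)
The plan is to obtain the inequality for all $d$ from a pointwise ideal containment, and then to upgrade it to an equality in large degree by showing that the graded quotient $Q=\overline\tJ(\gamma)/\tJ(\gamma)$ has finite length; this second part follows the localization strategy in the proof of \cite[Lemma 3.2]{schenck1997local}.

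First I would observe that since $\gamma\in\Delta_0^\circ$ is an \emph{interior} vertex, every edge $\tau\ni\gamma$ is interior, so the sums defining $\tJ(\gamma)$ and $\overline\tJ(\gamma)$ run over the same set of edges. For each such $\tau=[\gamma,\gamma']$ one has
\[
\langle\ell_\tau^{r_\tau+1}\rangle\cap\hat\fm_\gamma^{s_\gamma+1}\cap\hat\fm_{\gamma'}^{s_{\gamma'}+1}\subseteq\langle\ell_\tau^{r_\tau+1}\rangle\cap\hat\fm_\gamma^{s_\gamma+1},
\]
and summing over $\tau\ni\gamma$ gives $\tJ(\gamma)\subseteq\overline\tJ(\gamma)$. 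In particular $\dim\tJ(\gamma)_d\leq\dim\overline\tJ(\gamma)_d$ for every $d\geq0$, which is the first assertion; moreover, once $Q$ is shown to have finite length, the containment forces $\tJ(\gamma)_d=\overline\tJ(\gamma)_d$ for all $d\gg0$, giving the second assertion.

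To prove that $Q$ has finite length it suffices to check that $(\tJ(\gamma))_P=(\overline\tJ(\gamma))_P$ for every homogeneous prime $P$ strictly contained in the irrelevant ideal $\langle x_0,x_1,x_2\rangle$, since then $Q$ is supported only at that ideal. As localization commutes with the finite sums and intersections defining the two ideals, they differ only through the extra factors $\hat\fm_{\gamma'}^{s_{\gamma'}+1}$ appearing in $\tJ(\gamma)$. Now $\hat\fm_{\gamma'}$ is the (height two) prime of the point $\hat\gamma'\in\PP^2=\mathrm{Proj}\,\tS$, so whenever $\hat\fm_{\gamma'}\not\subseteq P$ the factor $(\hat\fm_{\gamma'}^{s_{\gamma'}+1})_P$ is the unit ideal and drops out of the intersection. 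Since $\hat\fm_{\gamma'}$ has height two, the only relevant prime containing it is $\hat\fm_{\gamma'}$ itself; hence for every relevant $P$ other than the neighbor primes $\hat\fm_{\gamma'}$ the extra factors disappear and $(\tJ(\gamma))_P=(\overline\tJ(\gamma))_P$ automatically.

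The remaining, and main, obstacle is the localization at a neighbor prime $P=\hat\fm_{\gamma'}$, precisely where the extra factor is active and could a priori obstruct the finite-length conclusion. Here I would exploit the sum over edges: because $\gamma$ is interior it has edges of at least two distinct slopes, and two distinct lines through $\hat\gamma$ meet only at $\hat\gamma\neq\hat\gamma'$, so there is an edge $\tau''=[\gamma,\gamma'']$ whose line $\V(\ell_{\tau''})$ avoids $\hat\gamma'$. For this edge $\ell_{\tau''}\notin P$, while $\hat\fm_\gamma\not\subseteq P$ and $\hat\fm_{\gamma''}\not\subseteq P$ (as $\hat\gamma,\hat\gamma''\neq\hat\gamma'$); consequently the $\tau''$-summand localizes to all of $\tS_P$ in both ideals, forcing $(\tJ(\gamma))_P=(\overline\tJ(\gamma))_P=\tS_P$. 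With equality of localizations established at every relevant prime, $Q$ has finite length, and therefore $\dim\tJ(\gamma)_d=\dim\overline\tJ(\gamma)_d$ for $d\gg0$, completing the proof.
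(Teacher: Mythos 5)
Your proof is correct, and its second half takes a genuinely different route from the paper. The first part (the termwise containment $\tJ(\gamma)\subseteq\overline{\tJ}(\gamma)$) is exactly the paper's argument. For equality in large degree, the paper follows the strategy of \cite[Lemma 3.2]{schenck1997local} and exhibits \emph{explicit annihilators} of the quotient $\overline{\tJ}(\gamma)/\tJ(\gamma)$: with $s=\max\{s_\nu,r_\tau\colon \nu,\tau\in\St(\gamma)\}$, two edges $\tau',\tau''$ through $\gamma$ of distinct slopes, and $p=\prod_{\tau\in\lk(\gamma)}\ell_\tau$, one checks that $\ell_{\tau'}^{s+1}f,\ \ell_{\tau''}^{s+1}f,\ p^{s+1}f\in\tJ(\gamma)$ for every $f\in\overline{\tJ}(\gamma)$ (the first because $\ell_{\tau'}^{s+1}\in\langle\ell_{\tau'}^{r_{\tau'}+1}\rangle\cap\hat\fm_{\gamma'}^{s_{\gamma'}+1}$ pushes everything into the single summand $\tJ(\tau')$, the last because $p\in\hat\fm_\nu$ for every $\nu\in\lk(\gamma)$), and then invokes $\langle x,y,z\rangle^N\subseteq\langle\ell_{\tau'},\ell_{\tau''},p\rangle$ for $N\gg0$, which holds since these three forms have no common projective zero. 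You instead prove the finite-length statement by the support criterion: you verify $(\tJ(\gamma))_P=(\overline{\tJ}(\gamma))_P$ at every homogeneous prime $P$ other than the irrelevant ideal, splitting into the case where $P$ contains no neighbour prime $\hat\fm_{\gamma'}$ (so the extra factors localize to units) and the case $P=\hat\fm_{\gamma'}$ (where an edge $\tau''$ of different slope makes both localized ideals the unit ideal). The two arguments rest on the same geometric facts---an interior vertex has edges of at least two slopes, and distinct point-primes $\hat\fm_\gamma,\hat\fm_{\gamma'},\hat\fm_{\gamma''}$ are incomparable---but your localization argument is shorter and avoids the Nullstellensatz-type containment, at the price of being non-effective; the paper's explicit annihilators are better suited to the question, raised in its concluding remarks, of estimating the smallest degree $d$ from which $\dim\tJ(\gamma)_d=\dim\overline{\tJ}(\gamma)_d$ holds.
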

		\begin{proof}
			If $\gamma\in\Delta_{0}^\circ$, the ideal $\tJ(\gamma)$ defined in \eqref{eq:ifacesgen} is given by
			\begin{equation}\label{eq:vertexideal}
				\tJ(\gamma)=\sum_{\tau=[\gamma,\nu]}\langle \ell_\tau^{r_\tau+1}\rangle\cap \hat\fm_\gamma^{s_\gamma+1}\cap \hat\fm_{\nu}^{s_{\nu}+1},
			\end{equation}
			where $\hat\fm_{\gamma}$ and  $\hat\fm_{\nu}$ are the ideals in $\tS$ of polynomials vanishing on $\hat\gamma$ and $\hat{\nu}$, respectively, for every vertex $\nu\in\lk(\gamma)$.
			Then, clearly, for any set of non-negative integers $\br= \{r_\tau\colon \tau\in\St(\gamma)_1^\circ\}$ and $\bs= \{s_\gamma\colon \gamma\in\St(\gamma)_0^\circ\}$ we have $\tJ(\gamma)\subseteq \overline\tJ(\gamma)$ proving the first claim.
			
			The second claim can be proved by showing that for a large enough $N$ and $\langle x,y,z\rangle^N$ annihilates $\overline{\tJ}(\gamma)/\tJ(\gamma)$.
			Let $s = \max\bigl\{s_{\nu}, r_\tau\colon \nu,\tau\in\St(\gamma)\bigr\}$ and let $\tau' = [\gamma,\gamma']$ and $\tau'' = [\gamma,\gamma'']$ be two  edges with distinct slope that contain the vertex $\gamma$.
			Take $p=\prod_{\tau\in\lk(\gamma)}\ell_{\tau}$, where $\ell_\tau$ denotes a choice of a linear form in $\tS$ vanishing on $\hat{\tau}$.
			Since $\ell_{\tau'} \in \hat{\fm}_\gamma \cap \hat{\fm}_{\gamma'}$,  $\ell_{\tau''} \in \hat{\fm}_\gamma \cap \hat{\fm}_{\gamma''}$, and $p \in \hat{\fm}_{\nu}$ for any $\nu\in\lk(\gamma)$, then for any $f \in \overline{\tJ}(\gamma)$ we have
			\begin{equation*}
				\ell_{\tau'}^{s+1}f,\;\ell_{\tau''}^{s+1}f,\;p^{s+1}f \in \tJ(\gamma)\,.
			\end{equation*}
			But $\langle x,y,z\rangle^N\subseteq \langle \ell_{\tau'},\ell_{\tau''},p \rangle$ for some $N\gg 0$, and the claim follows.
		\end{proof}	
		\begin{theorem}\label{thm:lowerbound_alternate}
			Let $\Delta\subseteq \R^2$ be a simplicial complex homeomerphic to a disk, then 
			\begin{equation*}
				\dim \calS_d^{\br,\bs}(\Delta) \geq \binom{d+2}{2} + \sum_{\tau\in\Delta_1^\circ}\dim \tJ(\tau)_d - \sum_{\gamma\in\Delta_0^\circ}\dim \overline{\tJ}(\gamma)_d\,, 
			\end{equation*}
			for every $d \geq 0$, and equality holds if $d \gg 0$. 
		\end{theorem}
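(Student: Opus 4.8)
The plan is to assemble both halves of the statement directly from three ingredients already in hand: the exact dimension formula \eqref{eq:dimformuladisk}, the inequality \eqref{eq:lowerbound}, and the comparison between $\tJ(\gamma)$ and $\overline{\tJ}(\gamma)$ furnished by Lemma \ref{lem:simpler_ideal}. No new homological input is needed; the work is entirely one of combining these.

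First I would establish the inequality for every $d \geq 0$. Since $\dim H_0(\calJ)_d \geq 0$, Equation \eqref{eq:dimformuladisk} specializes to \eqref{eq:lowerbound}. By Lemma \ref{lem:simpler_ideal} we have $\dim \tJ(\gamma)_d \leq \dim \overline{\tJ}(\gamma)_d$ for each interior vertex $\gamma \in \Delta_0^\circ$ and every $d \geq 0$. Because these dimensions enter \eqref{eq:lowerbound} with a minus sign, replacing each $\dim \tJ(\gamma)_d$ by the larger quantity $\dim \overline{\tJ}(\gamma)_d$ in the subtracted sum can only decrease the right-hand side. Chaining the two estimates therefore yields
\[
\dim \calS_d^{\br,\bs}(\Delta) \geq \binom{d+2}{2} + \sum_{\tau\in\Delta_1^\circ}\dim \tJ(\tau)_d - \sum_{\gamma\in\Delta_0^\circ}\dim \overline{\tJ}(\gamma)_d
\]
for all $d \geq 0$, which is the asserted lower bound.

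Next I would treat equality in high degree. By Theorem \ref{thm:lowerbound_full} (equivalently, by the finite-length conclusion of Lemma \ref{lemma:homology}, which forces $\dim H_0(\calJ)_d = 0$), the inequality \eqref{eq:lowerbound} becomes an equality once $d \gg 0$. The second clause of Lemma \ref{lem:simpler_ideal} supplies the complementary fact that $\dim \tJ(\gamma)_d = \dim \overline{\tJ}(\gamma)_d$ for $d \gg 0$. Choosing $d$ large enough that both phenomena hold simultaneously and substituting then converts the lower bound into the claimed identity.

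I do not anticipate any genuine obstacle at this stage, since the technical weight has already been absorbed into the preceding lemmas. The one point deserving care is that the high-degree threshold must be taken uniformly over the finitely many interior vertices and edges of $\Delta$, so that the two independent large-degree effects---the vanishing of $H_0(\calJ)_d$ and the agreement of $\tJ(\gamma)$ with $\overline{\tJ}(\gamma)$ at each vertex---come into force together; this is harmless because only finitely many conditions are involved.
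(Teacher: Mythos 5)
Your proposal is correct and follows essentially the same route as the paper: the paper's proof is a one-line combination of Theorem \ref{thm:lowerbound_full} (equality in \eqref{eq:lowerbound} for $d \gg 0$) with Lemma \ref{lem:simpler_ideal} ($\dim \tJ(\gamma)_d \leq \dim \overline{\tJ}(\gamma)_d$ for all $d$, with equality for $d \gg 0$), which is precisely what you assemble. Your additional remark about taking the high-degree threshold uniformly over the finitely many vertices is a sound (if implicit in the paper) point of care.
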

		\begin{proof}
			The claim follows from Theorem \ref{thm:lowerbound_full} and Lemma \ref{lem:simpler_ideal}.
		\end{proof}	
	}
	In the following, we consider the space of splines $\calS_d^{r,s}(\Delta)$ with fixed global smoothness $r$ and uniform supersmoothness $s$ at all the vertices of $\Delta$.
	As a corollary of Theorem \ref{thm:lowerbound_alternate} prove a combinatorial lower bound formula on the dimension of $\calS_d^{r,s}(\Delta)$.
	\begin{corollary}\label{cor:lowerbound}
		If $\Delta\subseteq \R^2$ and $\calS_d^{r,s}(\Delta)$ is the set of splines with uniform smoothness $r$ across all edges $\tau\in\Delta_1^\circ$ and supersmoothness $s$ at all the vertices $\gamma\in\Delta_0$, then 
		\begin{align}\label{eq:lowerbound_computable}
			\dim \calS_d^{r,s}(\Delta)&\geq 	
			\binom{d+2}{2}+\biggl[(d-s)^2 -\binom{d-2s+r}{2} \biggr]f_1^\circ
			- \sum_{\gamma\in\Delta_0^\circ}\dim \overline{\tJ}(\gamma)_d\,,
		\end{align}	
		where \begin{equation*}
			\dim \overline{\tJ}(\gamma)_d
			=\begin{cases} \frac{t_\gamma}{2}(d-s)(d+s-2r+1)-b_\gamma\binom{d+2-\Omega_\gamma}{2}-a_\gamma\binom{d+1-\Omega_\gamma}{2}&\mbox{ if } s<\Omega_\gamma-1,\\
				\binom{d+2}{2}-\binom{s+2}{2}	&\mbox{otherwise},
			\end{cases}
		\end{equation*}
		with $t_\gamma$ the number of edges with different slopes containing the vertex $\gamma$,  $\Omega_\gamma= \lfloor \frac{t_\gamma r}{t_\gamma-1}\rfloor +1$, $a_\gamma= t_\gamma(r+1)+(1-t_\gamma)\Omega$, and $b_\gamma=t_\gamma-a_\gamma-1$.
	\end{corollary}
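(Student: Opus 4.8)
The plan is to specialize the general lower bound of Theorem~\ref{thm:lowerbound_alternate} to the case of uniform smoothness $r$ across edges and uniform supersmoothness $s$ at all vertices, and then to evaluate the resulting edge and vertex terms using Lemmas~\ref{lem:dim_edge_ideal} and \ref{lemma:dimvertex}. Theorem~\ref{thm:lowerbound_alternate} already supplies, for every $d\geq 0$,
\[
\dim \calS_d^{r,s}(\Delta) \geq \binom{d+2}{2} + \sum_{\tau\in\Delta_1^\circ}\dim \tJ(\tau)_d - \sum_{\gamma\in\Delta_0^\circ}\dim \overline{\tJ}(\gamma)_d\,,
\]
so the entire task reduces to computing the two sums under the uniform hypotheses and matching them with \eqref{eq:lowerbound_computable}.

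For the edge term I would observe that, because supersmoothness $s$ is imposed at \emph{every} vertex of $\Delta$, both endpoints of any interior edge $\tau=[\gamma,\gamma']$ carry supersmoothness $s$, irrespective of whether they lie in the interior or on the boundary. Hence each $\tJ(\tau)$ is precisely the ideal $\langle\ell_\tau^{r+1}\rangle\cap\hat\fm_\gamma^{s+1}\cap\hat\fm_{\gamma'}^{s+1}$ treated in Lemma~\ref{lem:dim_edge_ideal}. Passing to the complementary dimension of the formula there gives $\dim\tJ(\tau)_d=\binom{d+2}{2}-\dim(\tS/\tJ(\tau))_d=(d-s)^2-\binom{d-2s+r}{2}$, independent of $\tau$; summing over the $f_1^\circ$ interior edges produces exactly the bracketed coefficient times $f_1^\circ$ in \eqref{eq:lowerbound_computable}.

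For the vertex term I would first invoke Lemma~\ref{lem:star-vertex-ideal}: since $\gamma$ is interior, all edges through it are interior, and the uniform exponents let me pull the power of the maximal ideal outside the sum, so that $\overline{\tJ}(\gamma)=\langle\ell_\tau^{r+1}\colon\tau\ni\gamma\rangle\cap\hat\fm_\gamma^{s+1}$, which is exactly the ideal whose quotient dimension is computed in Lemma~\ref{lemma:dimvertex}. Taking complementary dimensions in the two regimes of that lemma ($s<\Omega-1$ and $s\geq\Omega-1$) yields the stated piecewise expression for $\dim\overline{\tJ}(\gamma)_d$, with $t_\gamma,\Omega_\gamma,a_\gamma,b_\gamma$ the vertex-local specializations of $t,\Omega,a,b$. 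Substituting both evaluations into the displayed bound completes the argument.

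Once the earlier lemmas are in place this is essentially bookkeeping, so I do not anticipate a genuine obstacle; the only points demanding care are combinatorial. One must check that for an interior vertex every incident edge is indeed interior, so that the sum defining $\overline{\tJ}(\gamma)$ ranges over the same edge set used in Lemma~\ref{lemma:dimvertex}, and that $t_\gamma$ counts edges of \emph{distinct slopes} rather than all incident edges, since the resolution underlying Lemma~\ref{lemma:dimvertex} depends only on the distinct linear forms $\ell_\tau$. Keeping the edge hypothesis (both endpoints supersmooth, so Lemma~\ref{lem:dim_edge_ideal} applies) cleanly separate from the vertex simplification (where Theorem~\ref{thm:lowerbound_alternate} has already discarded the neighbour's $\hat\fm_\nu^{s+1}$ factor) is the one place where an error could plausibly creep in.
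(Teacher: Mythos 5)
Your proposal is correct and follows essentially the same route as the paper: specialize Theorem~\ref{thm:lowerbound_alternate}, evaluate $\dim \tJ(\tau)_d$ via Lemma~\ref{lem:dim_edge_ideal} (noting all interior edges have both endpoints supersmooth), and evaluate $\dim \overline{\tJ}(\gamma)_d$ via Lemma~\ref{lemma:dimvertex}, with Lemma~\ref{lem:star-vertex-ideal} identifying $\overline{\tJ}(\gamma)$ with the intersection ideal treated there. The only difference is that you make explicit the bookkeeping (passing to complementary dimensions, the use of Lemma~\ref{lem:star-vertex-ideal}, and the fact that every edge through an interior vertex is interior) that the paper's one-line proof leaves implicit.
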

	\begin{proof}
		The formula follows from Theorem \ref{thm:lowerbound_alternate} together with the formulas for $\dim \tJ(\tau)_d$  and  $\dim \overline{\tJ}(\gamma)_d$ in Lemmas \ref{lem:dim_edge_ideal} and  \ref{lemma:dimvertex}, respectively. 
	\end{proof}	
	
	In the examples in Section \ref{sec:examples} we compare the lower bounds \eqref{eq:lowerbound} and \eqref{eq:lowerbound_computable} for specific triangulations; we also consider the homology modules $H_0(\calJ)$ and give their explicit description. 
	
	We briefly comment that an upper bound can be proved on $\dim \calS_d^{\br,\bs}(\Delta)$ following a similar argument to that used in the case of splines $\calS_d^r(\Delta)$ with global uniform smoothness $r$ by Mourrain and Villamizar in \cite{mourrain2013homological}. 
	Namely, we fix a numbering  $\gamma_1,\dots,\gamma_{f_0^\circ}$ on the interior vertices of $\Delta$. 
	For each vertex $\gamma_i$, denote by $N(\gamma_i)$ the set of edges that connect $\gamma_i$ to any of the first $i-1$ vertices in the list or to a vertex on the boundary, and define the ideal $\tilde{\tJ}(\gamma_i)=\sum_{\tau\in N(\gamma_i)}\tJ(\tau)$.
	\begin{proposition}
		The dimension of $\calS^{\br,\bs}_d(\Delta)$ is bounded above by 
		\begin{equation}\label{eq:upperbound}
			\dim \calS_d^{\br,\bs}(\Delta)\leq \binom{d+2}{2} + \sum_{\tau\in\Delta_1^\circ}\dim \tJ(\tau)_d - \sum_{i=1}^{f_0^\circ}\dim \tilde\tJ(\gamma_i)_d\,.
		\end{equation}
	\end{proposition}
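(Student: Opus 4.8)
The plan is to turn the asserted inequality into a \emph{lower} bound on the rank of the boundary map $\partial_1$ of the complex $\calJ$, and then to establish that lower bound by a triangular (staircase) argument governed by the vertex ordering. First I would invoke the exact disk formula \eqref{eq:dimformuladisk} and rewrite its homology term: since $\calJ$ is the two-term complex $0\to\bigoplus_{\tau\in\Delta_1^\circ}\tJ(\tau)\xrightarrow{\partial_1}\bigoplus_{\gamma\in\Delta_0^\circ}\tJ(\gamma)\to 0$, we have $H_0(\calJ)=\bigl(\bigoplus_{\gamma}\tJ(\gamma)\bigr)/\im(\partial_1)$, so $\dim H_0(\calJ)_d=\sum_{\gamma\in\Delta_0^\circ}\dim\tJ(\gamma)_d-\dim\im(\partial_1)_d$. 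Substituting into \eqref{eq:dimformuladisk} the vertex sums cancel and leave the exact identity
\[
\dim\calS_d^{\br,\bs}(\Delta)=\binom{d+2}{2}+\sum_{\tau\in\Delta_1^\circ}\dim\tJ(\tau)_d-\dim\im(\partial_1)_d.
\]
Comparing this with \eqref{eq:upperbound}, it suffices to prove the single inequality $\dim\im(\partial_1)_d\geq\sum_{i=1}^{f_0^\circ}\dim\tilde\tJ(\gamma_i)_d$, valid for every $d\geq 0$.

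To prove this I would filter the target module by the chosen vertex order. Set $U=\im(\partial_1)_d$ and $G_i=\bigoplus_{j\leq i}\tJ(\gamma_j)_d$, so that $0=G_0\subseteq G_1\subseteq\cdots\subseteq G_{f_0^\circ}=\bigoplus_{\gamma\in\Delta_0^\circ}\tJ(\gamma)_d$ with $G_i/G_{i-1}\cong\tJ(\gamma_i)_d$. The filtration gives $\dim U=\sum_i\dim\bigl((U\cap G_i)/(U\cap G_{i-1})\bigr)$, and $(U\cap G_i)/(U\cap G_{i-1})$ is exactly the image of $U\cap G_i$ under the projection $G_i\to G_i/G_{i-1}\cong\tJ(\gamma_i)_d$. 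The key observation is that for an edge $\tau=[\gamma_i,\nu]\in N(\gamma_i)$ the other endpoint $\nu$ is, by definition of $N(\gamma_i)$, either a boundary vertex or one of $\gamma_1,\dots,\gamma_{i-1}$; hence for $g\in\tJ(\tau)_d$ the element $\partial_1(g[\tau])$ has nonzero components only in the slot $\gamma_i$ and in slots indexed by strictly earlier vertices. Thus $\partial_1(g[\tau])\in U\cap G_i$, and its image in $G_i/G_{i-1}$ equals $\pm g$.

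As $\tau$ ranges over $N(\gamma_i)$ and $g$ over $\tJ(\tau)_d$, these leading terms $g$ span $\sum_{\tau\in N(\gamma_i)}\tJ(\tau)_d=\tilde\tJ(\gamma_i)_d$ inside $\tJ(\gamma_i)_d\cong G_i/G_{i-1}$. Therefore the image of $U\cap G_i$ in $G_i/G_{i-1}$ contains $\tilde\tJ(\gamma_i)_d$, giving $\dim\bigl((U\cap G_i)/(U\cap G_{i-1})\bigr)\geq\dim\tilde\tJ(\gamma_i)_d$; summing over $i$ yields $\dim U\geq\sum_i\dim\tilde\tJ(\gamma_i)_d$, and \eqref{eq:upperbound} follows.

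The step I expect to require the most care is the triangularity claim: verifying that the definition of $N(\gamma_i)$ forces every off-pivot component of $\partial_1(g[\tau])$ into a strictly earlier slot, together with fixing orientation conventions so that the pivot component is genuinely $\pm g$ and does not accidentally cancel. Everything else is formal bookkeeping, namely the reduction via \eqref{eq:dimformuladisk} and the associated-graded dimension count. This argument mirrors the upper bound of Mourrain and Villamizar in \cite{mourrain2013homological} for $\calS_d^r(\Delta)$, the only new ingredient being that the edge ideals $\tJ(\tau)$ are now the superspline ideals of Section \ref{sec:idealedges} rather than the single powers $\langle\ell_\tau^{r+1}\rangle$.
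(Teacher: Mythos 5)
Your proof is correct and follows essentially the same route as the paper: the paper's one-line proof simply invokes the argument of \cite[Theorem 2]{mourrain2013homological}, which is precisely the reduction $\dim \calS_d^{\br,\bs}(\Delta) = \binom{d+2}{2} + \sum_{\tau}\dim\tJ(\tau)_d - \dim\im(\partial_1)_d$ followed by the block-triangular rank bound $\dim\im(\partial_1)_d \geq \sum_i \dim\tilde\tJ(\gamma_i)_d$ coming from the vertex ordering, exactly as you spell out. Your write-up supplies the details (the filtration $G_i$ and the observation that $\partial_1(g[\tau])$ has its off-pivot component in an earlier or boundary slot) that the paper leaves to the citation, and your concluding remark that the argument is independent of the particular edge ideals is the very point the paper makes.
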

	\begin{proof}
		The argument used in \cite[Theorem 2]{mourrain2013homological} is independent of the ideals $\tJ(\tau)$ associated to the edges $\tau\in\Delta_1^\circ$, and therefore it immediately leads to the upper bound in Equation \eqref{eq:upperbound}.
	\end{proof}
	Similarly as in the case of $\dim \calS_d^r(\Delta)$, an explicit upper bound formula requires the computation of $\dim\tJ(\gamma)$, and the following result follows immediately by comparing the lower and the upper bound in \eqref{eq:lowerbound} and  \eqref{eq:upperbound}, respectively.
	\begin{corollary}\label{cor:hom0}
		If $\Delta\subseteq\R^2$ is a simplicial complex homeomorphic to a disk such that $\tJ(\gamma)=\tilde\tJ(\gamma)$ for all $\gamma\in\Delta_0^\circ$ then equality holds in \eqref{eq:lowerbound}.
	\end{corollary}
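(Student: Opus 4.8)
The plan is to sandwich $\dim\calS_d^{\br,\bs}(\Delta)$ between the lower bound of \eqref{eq:lowerbound} and the upper bound of \eqref{eq:upperbound}, and to observe that the hypothesis forces these two bounds to coincide. First I would record that the assumption $\tJ(\gamma)=\tilde\tJ(\gamma)$ for every interior vertex $\gamma\in\Delta_0^\circ$ yields, in each degree $d\geq 0$, the equality of graded dimensions $\dim\tJ(\gamma)_d=\dim\tilde\tJ(\gamma)_d$. Since the interior vertices are precisely $\gamma_1,\dots,\gamma_{f_0^\circ}$, summing over all of them gives $\sum_{\gamma\in\Delta_0^\circ}\dim\tJ(\gamma)_d=\sum_{i=1}^{f_0^\circ}\dim\tilde\tJ(\gamma_i)_d$.

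Next I would substitute this identity into the upper bound \eqref{eq:upperbound}, turning it into
\[
\dim\calS_d^{\br,\bs}(\Delta)\leq\binom{d+2}{2}+\sum_{\tau\in\Delta_1^\circ}\dim\tJ(\tau)_d-\sum_{\gamma\in\Delta_0^\circ}\dim\tJ(\gamma)_d.
\]
The right-hand side is exactly the quantity appearing on the right of \eqref{eq:lowerbound}. Combining this with the lower bound \eqref{eq:lowerbound} then traps $\dim\calS_d^{\br,\bs}(\Delta)$ between two equal expressions, so equality must hold in \eqref{eq:lowerbound}.

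There is essentially no obstacle beyond unwinding the definitions, since all the analytic content already resides in the two established bounds: \eqref{eq:lowerbound} comes from the nonnegativity $\dim H_0(\calJ)_d\geq 0$ in \eqref{eq:dimformuladisk}, while \eqref{eq:upperbound} is the separately proved upper estimate. The only point I would check carefully is that the index set $\{\gamma_1,\dots,\gamma_{f_0^\circ}\}$ used in \eqref{eq:upperbound} coincides with $\Delta_0^\circ$ in \eqref{eq:lowerbound}, so that the two sums over vertex ideals match term by term. As a byproduct, comparing the resulting equality with \eqref{eq:dimformuladisk} shows that under the hypothesis the homology correction $\dim H_0(\calJ)_d$ vanishes in every degree $d\geq 0$.
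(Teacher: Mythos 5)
Your proposal is correct and matches the paper's own argument: the corollary is stated there precisely as an immediate consequence of comparing the lower bound \eqref{eq:lowerbound} with the upper bound \eqref{eq:upperbound}, which under the hypothesis $\tJ(\gamma)=\tilde\tJ(\gamma)$ have identical right-hand sides. Your closing observation that $\dim H_0(\calJ)_d=0$ in every degree is a valid byproduct, consistent with \eqref{eq:dimformuladisk}.
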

	
	\begin{example}[Optimality of lower bounds]\label{ex:delaunay}
		We generate a random triangulation $\Delta$, shown in Figure \ref{fig:delaunay}, for $r=2$ we consider the space $\calS_d^{r,\bs}(\Delta)$ of $C^r$-continuous splines on $\Delta$ with supersmoothness $\bs=\{s_\gamma\colon\gamma\in\Delta_0^\circ\}$ with $s_\gamma \in \{2, 3, 4\}$. 
		We compare the lower bound in Corollary \ref{cor:lowerbound} \eqref{eq:lowerbound_computable} 
		with the exact dimension of $\calS_d^{r,\bs}(\Delta)$, which is a subspace of $\calS_d^2(\Delta)$.
		In particular, we randomly assign supersmoothness $s_\gamma \in \{2, 3, 4\}$ to vertices $\gamma\in \Delta_0$.
		With reference to Figure \ref{fig:delaunay}, the vertices encircled once correspond to $s_\gamma = 3$, the ones encircled twice correspond to $s_\gamma = 4$, and the others correspond to $s_\gamma = 2$.
		As shown in Table \ref{tab:delaunay}, the explicit bound from Corollary \ref{cor:lowerbound} coincides with the lower bound in Equation \eqref{eq:lowerbound} as well as the dimension of $\calS_d^{r,\bs}(\Delta)$ in large degree. 
		In fact, in this case the equality between the dimension of the vertex ideals \eqref{eq:barideal} and \eqref{eq:vertexideal} in Lemma \ref{lemma:dimvertex} holds for every  $d\geq 4$. 
	\end{example}
	
	\begin{figure}[ht]
		\centering
		\includegraphics[trim=.4cm 1.4cm 0.7cm 0.3cm,clip,scale=1.0]{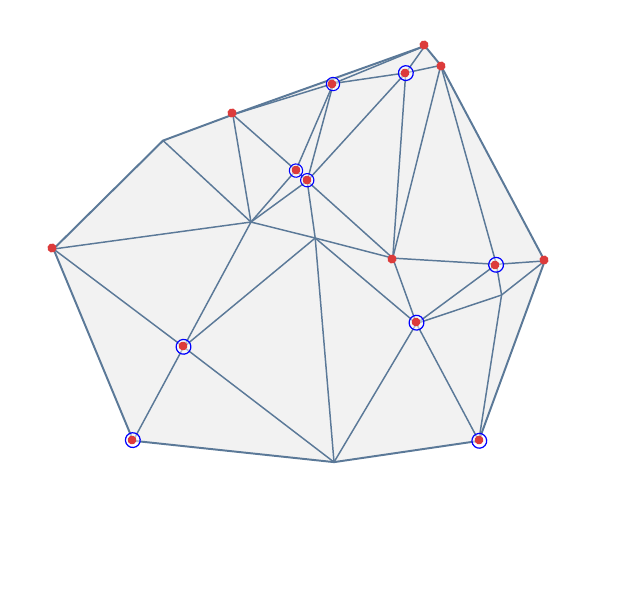}
		\caption{A randomly generated triangulation, the smoothness across all edges is $r = 2$, and additional smoothness $s - r \in \{0,1,2\}$ is assigned randomly to all vertices.
			Above, $s_\gamma-r = 1$ for vertices encircled once, $s_\gamma-r=2$ for vertices encircled twice, and $s_\gamma-r  = 0$ otherwise.
			The exact dimensions of $\calS_d^{r,\bs}$ and the lower bounds $\mathrm{LB}\eqref{eq:lowerbound}$ and $\mathrm{LB}\eqref{eq:lowerbound_computable}$ are given in Table \ref{tab:delaunay} for different choices of $d$.
		}\label{fig:delaunay}
	\end{figure}
	\renewcommand{\arraystretch}{1.0}
	\begin{table}[htbp]
		\centering
		\begin{tabular}{|>{\centering\arraybackslash}m{1cm}|>{\centering\arraybackslash}m{2.5cm}|>{\centering\arraybackslash}m{2cm}|>{\centering\arraybackslash}m{2cm}|>{\centering\arraybackslash}m{2.5cm}|}
			\hline\hline
			\rowcolor{gray!10}
			\thead[c]{$d$} & \thead[c]{$\dim H_0(\calJ)_d$} & \thead[c]{$\mathrm{LB}\eqref{eq:lowerbound_computable}$} & \thead[c]{$\mathrm{LB}\eqref{eq:lowerbound}$} &
			\thead[c]{$\dim \calS_d^{2,\bs}(\Delta)$} \\
			\hline\hline
			4 & 4 & 15 & 15 & 15 \\\hline
			5 & 0 & 30 & 31 & 31 \\\hline
			6 & 0 & 108 & 108 & 108 \\\hline
			7 & 0 & 223 & 223 & 223 \\
			\hline
			\hline
		\end{tabular}
		\caption{Lower bounds and dimension for the superspline space $\calS_d^{2,\bs}(\Delta)$ in Example \ref{ex:delaunay}, where $\Delta$ is the triangulation shown in Figure \ref{fig:delaunay}.
			Here, $\mathrm{LB}\eqref{eq:lowerbound}$ and $\mathrm{LB}\eqref{eq:lowerbound_computable}$ are the lower bounds from \eqref{eq:lowerbound} and \eqref{eq:lowerbound_computable}, respectively. 
		}\label{tab:delaunay}
	\end{table}
	\section{Examples}\label{sec:examples}
	\subsection{Argyris superspline space}\label{subsec:argyris_dimension_general}
	Let $\Delta\subseteq \R^2$ be a triangulation homeomorphic to a disk, and $r\geq 0$ an integer. In this example we compute the dimension of the  superspline space $\calS_{4r+1}^{r,2r}(\Delta)$. 
	A particular case, taking $r=1$ is called the Argyris element $\calS_{5}^{1,2}(\Delta)$, which was introduced in the finite-element literature in \cite{zlamal68,zen70}. 
	A description of the Argyris space, and the general case $\calS_{4r+1}^{r,2r}(\Delta)$ using Bernstein--B\'ezier techniques is included in \cite[Chapter 6--8]{lai2007spline}.
	
	Following Definition \ref{def:supersplines}, the space $\calS^{r,2r}_{4r+1}(\Delta)$ corresponds to the set
	\begin{equation*}
		\calS^{r,2r}_{4r+1}(\Delta) = 
		\bigl\{f\in \calS^{r}_{4r+1}(\Delta) \colon f\in C^{2r}(\gamma) \text{\ for all \ } \gamma \in \Delta_0\bigr\}\,.
	\end{equation*}
	If $\gamma\in\Delta_0^\circ$ is an interior vertex, then there are at lest three edges having $\gamma$ as one of their vertices, and at least two of them, say $\tau$ and $\tau'$, have different slopes.
	Let $\ell$ be a linear form vanishing on the plane containing $\hat \gamma$ and $\hat\gamma'$. After a suitable change of coordinates we can write
	\begin{align*}
		\tJ(\gamma) \supseteq \tJ({\tau}) + \tJ({\tau'}) 
		&=\bigl\langle \ell_\tau^{2r+1-i}\ell_{\tau'}^i\ell^i\ ,\  \ell_{\tau'}^{2r+1-i}\ell_\tau^i\ell^i\colon 0 \leq i \leq r  \bigr\rangle \\
		&= \langle x^{2r+1-i}y^i z^i\ ,\  x^i y^{2r+1-i} z^i\colon 0 \leq i \leq r  \rangle \,.
	\end{align*}
	Then, every monomial $x^a y^b z^c\in\tS$, with $a+b+c=4r+1$ and $0 \leq c \leq 2r$, is contained in $\tJ(\gamma)_{4r+1}$. Thus 
	\begin{equation}\label{eq:isoargyris}
		\tJ(\gamma)_{4r+1}
		\cong \tS_{4r+1} / \langle z^{2r+1} \rangle_{4r+1},
	\end{equation}
	and  $\dim \tJ(\gamma)_{4r+1} = \binom{4r+3}{2} - \binom{2r+2}{2} = (2r+1)(3r+2)$.
	
	By \cite[Lemma 3.3]{schenck1997local} (see Lemma \ref{lemma:ordering}), we know that there exists a numbering of the vertices of $\Delta$ such that every interior vertex $\gamma\in\Delta_0^\circ$ is connected to two vertices with smaller index by edges which have distinct slopes.
	Taking such an ordering on the vertices of $\Delta$, if $\gamma\in\Delta_0^\circ$, denote by $\tilde\tJ(\gamma)$ the sum of ideals $\tJ(\tau)$ associated to the edges $\tau$ containing $\gamma$ and whose other vertex is of smaller index than $\gamma$.
	Since the number of those edges with different slope is at least two, then  $\tilde\tJ(\gamma)_{4r+1}=\tJ(\gamma)_{4r+1}$.
	Thus, Corollary \ref{cor:hom0} implies $\dim H_0(\calJ)_{4r+1} =0$\,.
	
	On the other hand, for any edge $\tau\in\Delta_1^\circ$, the edge ideal $\tJ$ can be written as $\tJ(\tau) = \langle x^{2r+1-
		i}y^iz^i\colon 0 \leq i \leq r  \rangle\,.$
	Then $\dim \tJ(\tau)_{4r+1}$ is given in Lemma \ref{lem:dim_edge_ideal}. 
	Applying the dimension formula \eqref{eq:dimformuladisk}, together to \eqref{eq:dimE} and\eqref{eq:isoargyris}, we get
	\begin{align}\label{eq:argyris_general}
		\dim \calS^{r,2r}_{4r+1}(\Delta) 
		&=\binom{4r+3}{2} + f_1^\circ (2r+1)^2-f_1^\circ\binom{r+1}{2} 
		-f_0^\circ(2r+1)(3r+2)\\
		&=\binom{2r+2}{2}f_0+\binom{r+1}{2}f_1+\binom{r}{2}f_2\, .\nonumber
	\end{align}
	The last equality follows by the Euler relation $3f_2 = f_1 + f_1^\circ$. A proof of \eqref{eq:argyris_general} using Bernstein--B\'ezier methods is in 
	\cite[Theorem 8.1]{lai2007spline}.
	
	\subsection{Intrinsic supersmoothness and degenerate spaces on vertex stars} 
	Let us consider $\Delta=\St(\gamma)\subseteq\R^2$ be the star of the vertex $\gamma$. 
	For any pair of integers $0\leq r\leq d$ we have 
	\begin{equation}\label{eq:usualdimformulavertexstar}
		\dim \calS_d^r(\Delta)= \binom{d+2}{2} +(f_1^\circ-t)\binom{d-r+1}{2} + b\binom{d+2-\Omega}{2}+ a\binom{d-\Omega+1}{2}, 
	\end{equation}	
	where $t$ is the number of different slopes of the edges containing $\gamma$, $\Omega= \lfloor \frac{tr}{t-1}\rfloor +1$, $a= t(r+1)+(1-t)\Omega$, and $b=t-a-1$.
	
	The dimension formula \eqref{eq:usualdimformulavertexstar} was proved by Schumaker \cite{schumaker1984bounds}. The notation we use here follows the algebraic approach to proof this formula by Schenck and Stillman in \cite{schenck1997family} and Mourrain and Villamizar in \cite{mourrain2013homological}. 
	
	Notice that for any $s\geq r$, we have $\calS_d^{r,s}(\Delta)\subseteq\calS_d^{r,s}(\Delta^\circ)\subseteq\calS_d^r(\Delta)$, where as before, $\calS_d^{r,s}(\Delta^\circ)$ is the set of $C^r$-splines on $\Delta$ with supersmoothness $s$ at $\gamma$.
	It is clear that the set $\calS_d^{r,s}(\Delta)$ contains all the trivial splines, also called global polynomials, on $\Delta$ i.e., the splines $F$ on $\Delta$ whose restriction $F|_\sigma=f$ to each face $\sigma\in\Delta$ is the same polynomial $f\in \tR$. 
	Therefore if $\dim \calS_d^r(\Delta)=\binom{d+2}{2}$ then both $\calS_d^{r,s}(\Delta)$ and $\calS_d^{r,s}(\Delta^\circ)$ only contain trivial splines. 
	From \eqref{eq:usualdimformulavertexstar} it is easy to see that $\dim \calS_d^r(\Delta)=\binom{d+2}{2}$ for all $d\leq \Omega$ when $f_1^\circ > t$, and for all $d\leq r$ in the generic case.
	
	The dimension formula for supersplines spaces proved in Section \ref{sec:topology} can be used to identify unexpected (also called intrinsic) supersmoothness in spaces of $C^r$-splines.
	For example, by computing the exact dimension of the spaces we can provide a short alternative proof of the result by Sorokina in \cite[Theorem 3.1]{sorokina_2010}. Namely, we will show that the $C^r$-splines on any generic vertex star all possess supesmoothness $\lfloor\frac{r+1}{t-1}\rfloor+r$ at the interior vertex. 
	
	Suppose $f_1^\circ=t$, and take $s= \lfloor \frac{r+1}{t-1}\rfloor+r$. 
	Following the notation in Equation \eqref{eq:usualdimformulavertexstar}, we have that
	$s=\Omega$ if $\frac{r+1}{t-1}\in\Z$, and $s=\Omega-1$ otherwise. 
	By Corollary \ref{cor:dimstar_socle} we get
	\begin{equation}\label{eq:integer}
		\dim \calS_d^{r,s}(\Delta^\circ) =
		\begin{cases} t (\Omega-r+1)\binom{d-\Omega+1}{2}-t(\Omega-r)\binom{d-\Omega}{2} +\binom{\Omega+2}{2}\,; &\mbox{ if }  \frac{r+1}{t-1}\in\Z\\[5pt]
			t (\Omega-r)\binom{d-\Omega+2}{2}-t(\Omega-1-r)\binom{d-\Omega+1}{2} +\binom{\Omega+1}{2}\,; &\mbox{ otherwise}\,.	
		\end{cases}
	\end{equation}
	On the other hand, if $\frac{r+1}{t-1}\in\Z$ we have $a=t-1$, $b=0$, and Equation \eqref{eq:usualdimformulavertexstar} leads to
	\begin{equation}\label{eq:noninteger}
		\dim \calS_d^r(\Delta)=
		\begin{cases} 
			\binom{d+2}{2} +  (t-1)\binom{d-\Omega+1}{2}\,; &\mbox{ if }  \frac{r+1}{t-1}\in\Z\\[5pt]
			\binom{d+2}{2} + (t-a-1)\binom{d+2-\Omega}{2}+ a\binom{d-\Omega+1}{2}\,; &\mbox{ otherwise}\,.	
		\end{cases}
	\end{equation}
	A straightforward computation shows that $\dim \calS_d^r(\Delta)=\calS_d^{r,s}(\Delta^\circ)$ in both cases \eqref{eq:integer} and \eqref{eq:noninteger}. 
	
	Similarly, we can show that for vertex stars $\calS^{r,s}(\Delta^\circ)=\calS^r(\Delta)$ if and only if $\dim \calS_s^r(\Delta)= \binom{s+2}{2}$.
	This criterion was proved by Floater and Hu in \cite[Theorem 1]{floater_hu_2020}; they call \emph{degenerated} the spline spaces that only contain trivial splines. 
	
	If we assume that $\calS^r(\Delta)\subseteq \calS^{r,s}(\Delta^\circ)$, by Theorem \ref{theo:vetexstardim}  we know that $\calS_s^{r,s}(\Delta^\circ)=\binom{s+2}{2}$. Then  $\calS_s^r(\Delta)$ is also degenerated. Conversely, if $\calS_s^r(\Delta)=\binom{s+2}{2}$ for $0\leq r<s$, then by \eqref{eq:usualdimformulavertexstar} we have
	\begin{equation*}
		\dim \calS_s^r(\Delta)= \binom{s+2}{2} +(f_1^\circ-t)\binom{s-r+1}{2} + b\binom{s+2-\Omega}{2}+ a\binom{s-\Omega+1}{2}\,, 
	\end{equation*}	
	and this implies that the triangulation is generic i.e., $f_1^\circ = t$, and that $s+2-\Omega\leq 1$, or $s+1-\Omega\leq 1$ and $b=0$. 
	Suppose $s+2-\Omega\leq 1$.  It follows $\Omega \geq s+1$, which is equivalent to say that the generators of the module of syzygies of the forms $\{\ell_\tau^{r+1}\colon \tau\in\Delta_1^\circ\}$ have degree strictly greater than $s-(r+1)$. 
	If we assume $\gamma$ is at the origin then the linear forms $\ell_{\tau}\in\tS$, and therefore the generators of their module of syzygies, only involve the variables $x,y$. 
	As a graded module over $\tS=\R[x,y,z]$, the set $\calS^r(\Delta)$ is generated by trivial splines and splines of the form $G=(g_1\ell_1^{r+1},g_1\ell_1^{r+1}+ g_2\ell_2^{r+1}, \dots, g_1\ell_1^{r+1}+\cdots+g_t\ell_t^{r+1})$, where $g_1\ell_1^{r+1}+\cdots+g_t\ell_t^{r+1}=0$ is a syzygy of the forms $\{\ell_\tau^{r+1}\colon \tau\in\Delta_1^\circ\}$. 
	(An introduction to splines as modules over a ring can be found in \cite[Chapter 8]{UsAlg}.)
	Since all the polynomials $g_i$ are homogeneous in $x,y$ of degree greater or equal to $s+1-(r+1)$, then each polynomial (piece) $g_i\ell_i^{r+1}$ is zero up to order $s$ at $\gamma$. Hence $G\in\calS^s(\gamma)$, which implies that every spline in $C^r(\Delta)$ is in $C^s(\gamma)$.
	
	If $b=0$, then the smallest degree of a syzygy is $\Omega+1$. The condition $\Omega\geq s$  implies $\deg(g_i)\geq s+1 -(r+1)$, hence also in this case $\calS^r(\Delta)\subseteq C^s(\gamma)$ and it follows that $\calS^{r}(\Delta)\subseteq \calS^{r,s}(\Delta^\circ)$.
	Therefore $\calS^{r}(\Delta)\subseteq \calS^{r,s}(\Delta^\circ)$ if and only if $\calS_s^r(\Delta)$ contains only trivial splines.
	In particular, this criterion combined with the result by Sorokina \cite[Theorem 3.1]{sorokina_2010} implies that $s= \lfloor\frac{r+1}{t-1}\rfloor+r$ is the largest order of supersmoothness such that $\calS^r(\Delta)\subseteq \calS^{s,r}(\Delta^\circ)$.
	
	\subsection{Supersmooth splines on Powell--Sabin 6-split refinements}\label{ex:hendrik}
	Let $\Delta\subseteq\R^2$ be a triangulation, and let $\Delta^\star$ be a triangulation obtained from $\Delta$ via a Powell--Sabin six split.
	Namely, we choose a point $Z_\sigma$ in the interior of each triangle $\sigma\in\Delta$ so that if two triangles $\sigma,\sigma'\in\Delta$ share a common edge $\tau=\sigma\cap\sigma'$, then the line joining $Z_\sigma$ and $Z_{\sigma'}$ intersects $\tau$ at a point $B_\tau$ that lies at the interior of $\tau$.
	If $\tau\in\Delta_1$ is an edge on the boundary, we choose an interior point on $\tau$ and denote it by $B_\tau$.
	The set of vertices  $\Delta_0$ of $\Delta$ together with the points $Z_\sigma$ and $B_\tau$, for all $\sigma\in\Delta_2$ and $\tau\in\Delta_1$, are the vertices of the new triangulation $\Delta^\star$. 
	If  $\sigma\in\Delta_2$ is a triangle of $\Delta$, we join $Z_\sigma$ to each vertex of $\sigma$, and to each vertex $B_\tau$ on the edges $\tau\in\sigma$.
	Thus, the Powell--Sabin triangulation $\Delta^\star$ is a refinement of $\Delta$, where each triangle in $\Delta$ has been subdivided into six smaller triangles. 
	An example of a partition along with its Powell--Sabin 6-split is in Figure \ref{fig:MSsplit}.
	
	\begin{figure}[htbp]
		\centering
		\includegraphics[height=3.5cm]{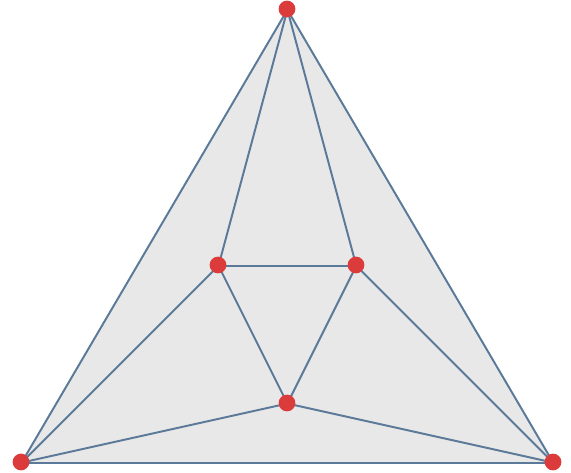}\qquad	\includegraphics[height=3.5cm]{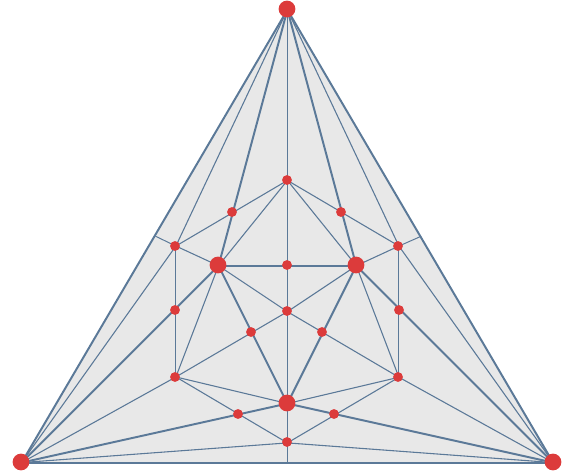}\qquad
		\includegraphics[height=3.5cm]{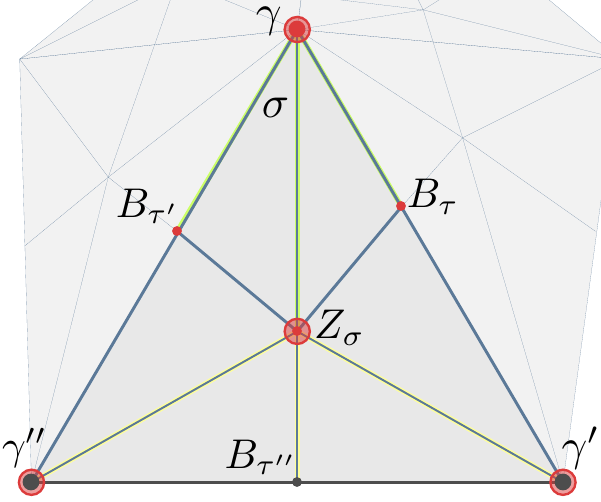}
		\caption{Symmetric Morgan--Scott triangulation (left), and the corresponding Powell--Sabin 6-split applied to each triangle of this triangulation (center). The notation in the 6-split (right) is used in Example \ref{ex:hendrik}; in this case, the vertices $\gamma'$ and $\gamma''$ are assumed to be on the boundary. The smoothness across the edges $[Z_\sigma,B_\tau]$ and $[Z_\sigma,B_{\tau'}]$ and at the vertices $\gamma$, $\gamma'$ and $\gamma''$ is $s\geq r$.  }\label{fig:MSsplit}
	\end{figure}
	In the following, given integers $r \geq 0$, $s \geq \max\{r,2r-1\}$ and $d \geq 2s-r+1$, we compute $\dim \calS_d^{\br,\bs}(\Delta^\star)$, where  
	$\br=\bigl\{r_\tau\colon \tau\in(\Delta^\star)^\circ_1\bigr\}$ and $\bs=\bigl\{s_\gamma\colon \gamma\in(\Delta^\star)_0\bigr\}$ are defined by
	\begin{align*}
		r_\tau&=
		\begin{cases}
			s &\mbox{ if } \tau=[Z_\sigma,B_\beta] \mbox{ for some }	\sigma\in\Delta_2, \mbox{and } \sigma\supseteq	\beta\in\Delta_1\,,\\
			r &\mbox{ otherwise,} 	
		\end{cases}\\
		s_\gamma&=
		\begin{cases}
			s &\mbox{ if } \gamma\in \Delta_0\cup \{Z_\sigma\colon\sigma\in\Delta_2\}\,,\\
			r &\mbox{ if  } \gamma\in\{B_\tau\colon\tau\in\Delta_1^\circ\}\;.
		\end{cases}
	\end{align*}
	The specific choice $r\geq 0$, $s = 2r-1$ and $d=3r-1$ is studied by Speleers in \cite{speleers_2013} using Bernstein--B\'ezier methods.
	
	In our settings, from the dimension formula in Equation \eqref{eq:dimformuladisk} we get
	\begin{equation}\label{eq:PS}
		\dim \calS_d^{\br,\bs}(\Delta^\star)= \binom{d+2}{2} + \sum_{\tau\in(\Delta^\star)_1^\circ}\dim \tJ(\tau)_d - \sum_{\gamma\in(\Delta^\star)_0^\circ}\dim \tJ(\gamma)_d  +\dim H_0(\calJ)_d\,,  
	\end{equation}
	where the ideal $\tJ(\tau)$, for each $\tau\in(\Delta^\star)_1^\circ$,  is defined by
	\begin{equation}\label{eq:casesPSedges}
		\tJ(\tau)= \begin{cases}
			\langle \ell_{\tau}^{s+1}\rangle &\mbox{ if }  \tau=[Z_\sigma,B_\beta] \mbox{ for }	\sigma\in\Delta_2, \mbox{and } \beta\in\Delta_1 ;\\
			\langle \ell_{\tau}^{r+1}\rangle\cap\hat\fm_{\gamma}^{s+1}\cap\hat\fm_{Z_\sigma}^{s+1} &\mbox{ if }  \tau=[Z_\sigma,\gamma] \mbox{ for }	\sigma\in\Delta_2, \mbox{and } \gamma\in\Delta_0 ;\\
			\langle \ell_{\tau}^{r+1}\rangle\cap\hat \fm_{\gamma}^{s+1}&\mbox{ if }  \tau=[B_\beta,\gamma] \mbox{ for }	\beta\in\Delta_1^\circ, \mbox{and } \gamma\in\Delta_0\,, 
		\end{cases} 
	\end{equation}
	and  $\tJ(\gamma)=\sum_{\tau\in\Delta_1^\circ, \gamma\in\tau}\tJ(\tau)$, for each vertex $\gamma\in(\Delta^\star)_0^\circ$\,. Here, as before, if $\tau\in\Delta^\star_1$ and $\gamma\in\Delta^\star_0$, then $\ell_\tau$ is a linear form vanishing on $\hat\tau$, and  $\hat\fm_\gamma$ is the ideal of all polynomials in $\tS$ vanishing at $\hat\gamma$.
	
	Notice that for the ideals $\tJ(\tau)$ in \eqref{eq:casesPSedges}, we have $\dim \tJ(\tau)=\binom{d-s+1}{2}$ if $\tau=[Z_\sigma,B_\beta]$, and  
	$\dim \tJ(\tau)$ in the other two cases follows directly from Equations \eqref{eq:dimE} and \eqref{eq:dimEborder}, respectively. 
	
	The dimension of the ideal $\tJ(\gamma)$ associated to the vertices can be computed as follows. We consider the three types of vertices separately.
	Thereafter, we show that $H_0(\calJ)_d = 0$.
	\begin{description}[style=unboxed,leftmargin=0cm]
		\item[Case 1]
		We show that $\dim \tJ(Z_\sigma)_d = \binom{d+2}{2}-\binom{s+2}{2}$.
		By construction, $\tJ(Z_\sigma)$ is the sum of three ideals of the form $\langle\ell_{\tau}^{s+1}\colon \tau=[Z_\sigma,B_\beta]\rangle\subseteq \hat\fm_{Z_\sigma}^{s+1}$ where $B_\beta$ is the vertex on the edge $\beta\subseteq\sigma$, and three ideals of the form $\langle \ell_{\tau}^{r+1}\rangle\cap \hat\fm_{Z_{\sigma}}^{s+1} \cap\hat\fm_{\nu}^{s+1}$ for the edges $\tau=[Z_\sigma,\nu]$ for vertices $\nu\in\sigma$, $\nu\in\Delta_0$.
		Then, in particular $\tJ(Z_\sigma) \subseteq \hat\fm_{Z_{\sigma}}^{s+1}$. 	
		We want to show that $ x^{i}y^jz^k\in \tJ(Z_\sigma)$ for all monomials of degree $d=i+j+k$ for $d\geq 2s-r+1$,
		such that $i+j=s+1$. 
		
		By a change of coordinates, we may assume $\ell_{[Z_\sigma,\gamma]}=x$,\,  $\ell_{[Z_\sigma,\gamma']}=y$, \, 
		and $\hat\fm_\gamma=\langle x,z\rangle$. 
		Then, $\hat\fm_{Z_{\sigma}}=\langle x,y\rangle$ and $\hat\fm_{\gamma'}=\langle y,z\rangle$.
		
		Since
		\[\bigl\langle \ell_{[Z_\sigma,\gamma]}^{r+1}\bigr\rangle\cap \hat\fm_{Z_{\sigma}}^{s+1} \cap\hat\fm_{\gamma}^{s+1} + \bigl\langle \ell_{[Z_\sigma,\gamma']}^{r+1}\bigr\rangle\cap \hat\fm_{Z_{\sigma}}^{s+1} \cap\hat\fm_{\gamma'}^{s+1} \subseteq \tJ(Z_\sigma)\,,
		\]
		then
		$x^{s+1-i}y^iz^i$ and $y^{s+1-i}x^iz^i$ are elements in $\tJ(Z_\sigma)$, for all $i=0,\dots,s-r$.
		Thus, if $s\geq 2r-1$ this implies that $x^iy^jz^k\in \tJ(Z_\sigma)$ for all $i+j=s+1$ in degree $d\geq 2s-r+1$, except for $x^ry^rz^k$ when $s=2r-1$.
		But in the latter case, since $\ell_{[Z_\sigma,B_{\tau''}]}^{s+1}\in \tJ(Z_\sigma)$ and
		$\ell_{[Z_\sigma,B_{\tau''}]}\in \hat\fm_{Z_{\sigma}}=\langle x,y \rangle$, then it follows $x^ry^rz^k\in\tJ(Z_\gamma)$.
		Consequently, $(\hat\fm_{Z_{\sigma}}^{s+1})_d \subseteq \tJ(Z_\sigma)_d$ and the dimension formula follows.
		
		\smallskip
		
		\item[Case 2]
		Let $\gamma \in \Delta_0^\circ$.
		Similarly as in Case 1, we have $\dim \tJ(Z_\sigma)_d = \binom{d+2}{2}-\binom{s+2}{2}$.
		Indeed, the ideal $\tJ(\gamma)$ is the sum of at least two ideals of the form $\langle\ell_\tau^{r+1}\rangle\cap \hat{\fm}_\gamma^{s+1}$ and three of the form $\langle\ell_\tau^{r+1}\rangle\cap \hat{\fm}_\gamma^{s+1}\cap \hat{\fm}_{Z_\sigma}^{s+1}$, for at least three linearly independent forms $\ell_\tau$, for faces $\sigma\in\Delta$ and $\tau\in\Delta_1^0$ containing $\gamma$. 
		Then, also in this case $\tJ(\gamma)\subseteq \hat\fm_{\gamma}^{s+1}$ and $\tJ(\gamma)\subseteq \hat\fm_{\gamma}^{s+1}$, and 
		the argument used in Case 1 leads to the dimension formula for $\tJ(Z_\sigma)_d$.
		
		\smallskip 
		
		\item[Case 3] Let  $B_\tau$ be the vertex on the (interior of the) edge $\tau\in\Delta_1^\circ$. 
		The ideal $\tJ(B_\tau)$ is generated by the sum of four ideals, two of the form $\langle \ell_\tau^{s+1}\rangle\cap \hat\fm_{Z_{\sigma}}^{s+1}=\langle\ell^{s+1}_\tau\rangle$, for $\tau=[B_\tau,Z_\sigma]$, and two of the form $\langle \ell_\tau^{r+1}\rangle\cap\hat\fm_{\gamma}^{s+1}$, for $\tau=[B_\tau,\gamma]$.
		By a change of coordinates we may assume that $\ell_{[B_\tau,\gamma]} = x$, $\ell_{[Z_\sigma,\gamma]} = y$ and $\ell_{[Z_\sigma,\gamma']} = z$. Then,
		\begin{equation}\label{eq:Btau}
			\tJ(B_\tau)=\langle (y+az)^{s+1},x^{s+1-i}y^i,x^{s+1-i}z^i\colon 0\leq i\leq s-r\rangle\,,
		\end{equation}
		for some $a \in \R$.
		We use the following lemma to compute the dimension of this ideal in degree $d \geq 2s-r$.
		\begin{lemma}\label{lem:dimJB}
			Let $\tJ(B_\tau)$ be as in \eqref{eq:Btau} and $d \geq 2s-r$.
			Then,
			\begin{multline}\label{eq:dimJB}
				\dim \tJ(B_\tau)_d =\\ 2(s-r+1)\binom{d-s+1}{2} - \bigr(2(s-r)+1\bigr)\binom{d-s}{2}
				- \binom{d-s-r}{2} + \binom{d-2s-r}{2}.
			\end{multline}
		\end{lemma}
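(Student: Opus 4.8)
The plan is to separate the single non-monomial generator from the monomial ones. Writing $\ell := y+az$, I would split $\tJ(B_\tau)=I+\langle\ell^{s+1}\rangle$, where $I=\langle x^{s+1-i}y^i,\,x^{s+1-i}z^i: 0\le i\le s-r\rangle$ is the monomial part. This $I$ is exactly the sum $A+B$ of the two edge-type ideals $A=\langle x^{r+1}\rangle\cap\hat\fm_\gamma^{s+1}$ and $B=\langle x^{r+1}\rangle\cap\hat\fm_{\gamma'}^{s+1}$ treated in Lemma \ref{lem:dimEborder} (here $\hat\fm_\gamma=\langle x,y\rangle$ and $\hat\fm_{\gamma'}=\langle x,z\rangle$). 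The idea is to pass to the quotient $\tS/I$, whose graded structure becomes completely transparent in high degree, and then to divide out the image of the extra generator $\ell^{s+1}$.

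As a monomial ideal, $I$ consists of the $x^ay^bz^c$ with $a\ge r+1$ and $a+\max\{b,c\}\ge s+1$. First I would show that the second condition becomes redundant once the total degree is large: any monomial with $a\ge r+1$ but $a+\max\{b,c\}\le s$ satisfies $b+c\le 2(s-a)$, hence has degree at most $2s-r-1$, so all such monomials have disappeared precisely when $d\ge 2s-r$. Thus $I_d=\langle x^{r+1}\rangle_d$ in the stated range, and this is exactly where the hypothesis $d\ge 2s-r$ enters. Consequently $(\tS/I)_d=\bigoplus_{a=0}^{r}x^a\,\R[y,z]_{d-a}$, graded by the exponent of $x$.

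Finally, since $\ell^{s+1}$ involves only $y$ and $z$, multiplication by it preserves the exponent of $x$, so reduction modulo $I$ never mixes different powers of $x$. This gives the splitting $\bigl(\tS/\tJ(B_\tau)\bigr)_d\cong\bigoplus_{a=0}^{r}x^a\bigl(\R[y,z]/\langle\ell^{s+1}\rangle\bigr)_{d-a}$, reducing everything to the Hilbert function of a principal ideal generated by a power of a linear form in two variables, which is elementary. The hard part will be the clean justification of the two structural claims -- the degree-$d$ identity $I_d=\langle x^{r+1}\rangle_d$ and the absence of $x$-degree mixing under reduction modulo $I$ -- since once these are in place the computation collapses; granting them, summing $\dim\bigl(\R[y,z]/\langle\ell^{s+1}\rangle\bigr)_{d-a}$ over $0\le a\le r$, rewriting the result in binomial coefficients, and subtracting from $\binom{d+2}{2}$ yields the closed form \eqref{eq:dimJB}.
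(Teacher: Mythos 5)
Your approach is correct, and it takes a genuinely different route from the paper's proof. The paper first replaces the generator $(y+az)^{s+1}$ by $y^{s+1}$, arguing this does not change the dimension of the graded piece in degrees $d\geq 2s-r$, and then writes down an explicit graded free resolution of the resulting monomial ideal $\tJ=\langle y^{s+1},\,x^{s+1-i}y^i,\,x^{s+1-i}z^i\colon 0\leq i\leq s-r\rangle$, with maps $\phi_1,\phi_2,\phi_3$ encoding the relations among the generators, and reads the dimension off that resolution. You instead simplify the monomial part: your description of $I$ as the span of the monomials $x^ay^bz^c$ with $a\geq r+1$ and $a+\max\{b,c\}\geq s+1$ is correct, the degree bound $a+b+c\leq 2s-a\leq 2s-r-1$ for the excluded monomials is correct, hence $I_d=\langle x^{r+1}\rangle_d$ for $d\geq 2s-r$, so that $\tJ(B_\tau)_d=\langle x^{r+1},\ell^{s+1}\rangle_d$ is the degree-$d$ piece of a complete intersection in disjoint sets of variables; the splitting $\tS/\langle x^{r+1},\ell^{s+1}\rangle\cong\R[x]/\langle x^{r+1}\rangle\otimes\R[y,z]/\langle\ell^{s+1}\rangle$ (equivalently, the Koszul resolution on the two coprime generators) then gives the Hilbert function at once. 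Both proofs spend the hypothesis $d\geq 2s-r$ at the analogous spot, trading the actual ideal for a simpler one in high degree; what your route buys is that the second step collapses to a two-line computation instead of the construction and verification of a three-step resolution, while the paper's route produces a resolution of its auxiliary ideal valid in all degrees, in the same style as Lemmas \ref{lem:dim_edge_ideal} and \ref{lem:dimEborder}.

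One caveat you should be aware of: carried out to the end, your method gives $\dim\tJ(B_\tau)_d=\binom{d-r+1}{2}+\binom{d-s+1}{2}-\binom{d-s-r}{2}$ for $d\geq 2s-r$, and this does \emph{not} coincide with \eqref{eq:dimJB} as printed; it coincides with \eqref{eq:dimJB} only after replacing the last term $\binom{d-2s-r}{2}$ by $\binom{d-2s+r}{2}$. This is a typo in the statement rather than an error on your side: telescoping the two sums in the paper's own final display yields $-\binom{d-s}{2}+\binom{d-2s+r}{2}$, so the paper's proof also arrives at the $\binom{d-2s+r}{2}$ version. A concrete check: for $(r,s,d)=(1,2,6)$ one has $\tJ(B_\tau)_6=x^2\tS_4+(y+az)^3\tS_3$ with intersection $x^2(y+az)^3\tS_1$, so the dimension is $15+10-3=22$, matching your formula and the corrected one, whereas the printed formula gives $40-18-3+0=19$. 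So the only inaccurate sentence in your plan is the final claim that the computation lands on \eqref{eq:dimJB} verbatim; it lands on the corrected form, which is the true dimension.
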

		\begin{proof}
			If $d \geq 2s-r$, all monomials in $\langle x^{s+1-i}y^i:i=0,\dots,s+1 \rangle_d$ can be generated as elements in $\langle x^{s+1-i}(y+az)^i \rangle_d + \langle y^{s+1} \rangle_d + \langle z^{s+1} \rangle_d$.
			Then $\tJ(B_\tau)_d=\tJ_d$, where $\tJ\subseteq\tS$ is the ideal given by
			\begin{equation*}
				\tJ=\langle y^{s+1}, x^{s+1-i}y^i,x^{s+1-i}z^i\colon 0\leq i\leq s-r \rangle\,. 
			\end{equation*}
			In particular, $\dim \tJ(B_\tau)_d=\dim \tJ_d$ for $d \geq 2s-r$.		
			Similarly as in Lemmas \ref{lem:dim_edge_ideal} and \ref{lem:dimEborder}, we get the exact sequence
			\begin{multline}\label{eq:sequence}
				\renewcommand{\arraystretch}{1.5}
				0 
				\rightarrow \bigoplus_{i=1}^{s-r}\tS(-s-i-2)
				\xrightarrow{\phi_3} \begin{array}{l}
					\bigoplus_{i=1}^{2(s-r)}\tS(-s-2)\\
					\bigoplus_{i=1}^{s-r}\tS(-s-i-1)\\
					\bigoplus\tS(-s-r-2)
				\end{array}
				\xrightarrow{\phi_2} \bigoplus_{i=1}^{2(s-r+1)}\tS(-s-1)
				\xrightarrow{\phi_1} \tS
				\rightarrow \tS/\tJ\rightarrow 0.
			\end{multline}
			The functions in \eqref{eq:sequence} can be described as follows, 
			\begin{itemize}
				\renewcommand{\labelitemi}{\Tiny$\blacksquare$}
				\item $\phi_1 = [y^{s+1}, x^{s+1}, x^{s}y, \cdots, x^{r+1}y^{s-r}, x^{s}z, \cdots, x^{r+1}z^{s-r}]$;
				\item $\phi_2 = [\phi_{21}~|~\phi_{22}~|~\phi_{23}~|~\phi_{24}]$ is a matrix with $3(s-r)+1$ columns where
				\begin{itemize}
					\item the $i$-th column of $\phi_{21}$ corresponds to the relation between $x^{s+2-i}y^{i-1}$ and $x^{s+1-i}y^{i}$, $i = 1, \dots, s-r$,
					\item the $i$-th column of $\phi_{22}$ corresponds to the relation between $x^{s+2-i}z^{i-1}$ and $x^{s+1-i}z^{i}$, $i = 1, \dots, s-r$,
					\item the $i$-th column of $\phi_{23}$ corresponds to the relation between $x^{s+1-i}y^i$ and $x^{s+1-i}z^{i}$, $i = 1, \dots, s-r$,
					\item the only column of $\phi_{24}$ corresponds to the relation between $y^{s+1}$ and $x^{r+1}y^{s-r}$;
				\end{itemize}
				\item $\phi_3$ is a matrix with $s-r$ columns where
				\begin{itemize}
					\item the first column corresponds to the relation between the first columns of $\phi_{21}$, $\phi_{22}$ and $\phi_{23}$,
					\item the $i$-th column, $i \geq 2$, corresponds to the relation between the $i$-th columns of $\phi_{21}$, $\phi_{22}$ and $\phi_{23}$, and the $(i-1)$-th column of $\phi_{23}$.
				\end{itemize}
			\end{itemize}	
			Notice that if $s=r$, then $\tJ$ is an ideal in two variables and  the sequence in \eqref{eq:sequence} reduces to that in \cite[Theorem 2.7]{geramita1998fat} with $t=2$ (the number of edges with different slopes at the vertex $B_\tau$).
			From \eqref{eq:sequence} we get 
			\begin{multline*}
				\dim \tJ_d = 2(s-r+1)\binom{d-s+1}{2} - 2(s-r)\binom{d-s}{2}-\sum_{i=1}^{s-r}\binom{d-s-i+1}{2}\\
				\quad\quad - \binom{d-s-r}{2} + \sum_{i=1}^{s-r}\binom{d-s-i}{2}\,,
			\end{multline*}
			and this yields the dimension formula in Equation \eqref{eq:dimJB}.
		\end{proof}
	\end{description}
	\subsubsection*{Vanishing homology: } 
	We now prove that $H_0(\calJ)_d=0$ for every $d\geq2s-r+1$.
	Recall that by \cite[Lemma 3.3]{schenck1997local} (see Lemma \ref{lemma:ordering}) we can always choose a triangle in $\Delta$ with two vertices on the boundary.
	Let $\sigma\in\Delta$ be such a triangle, we denote its vertices as in Figure \ref{fig:MSsplit} (right), with the edge $[\gamma',\gamma'']$ lying on the boundary of $\Delta$.
	
	First, for each vertex contained in $\sigma$ we select a subset of interior edges in $\Delta_1^\star$ such that the vertex ideal can be generated by the sum of these edge ideals.
	Specifically, for the vertex $\gamma$ we will choose the edges $[\gamma,B_\tau]$, $[\gamma,B_{\tau'}]$ and $[\gamma,Z_\sigma]$, and for the vertex $Z_\sigma$ we will take the three edges connecting $Z_\sigma$ to the boundary.
	Denote by $\sigma'\in\Delta_2$ the triangle adjacent to $\sigma$ such that $\sigma\cap\sigma'=\tau$, and $\tau=[\gamma,\gamma']$.
	
	Up to a change of coordinates, we may assume that $\ell_{\tau}=x$ and $\tJ(B_\tau)$ is the sum of the ideals
	\begin{align*}\tJ([B_\tau, Z_\sigma])&=\tJ([B_\tau,Z_{\sigma'}])=\langle (y+az)^{s+1}\rangle,\\
		\tJ([B_\tau,\gamma])&=\langle x^{s+1-i}y^i\colon 0\leq i\leq s-r\rangle, \text{\; and}\\ 
		\tJ([B_\tau,\gamma'])&=\langle x^{s+1-i}z^i\colon 0\leq i\leq s-r\rangle.
	\end{align*}
	Let $\partial_1$ be the boundary map in the complex $\calJ$ of $\Delta^\star$ i.e, 
	\[\partial_1\colon\bigoplus_{\tau\in{(\Delta^\star)}_1^\circ}\tJ(\tau)\rightarrow\bigoplus_{\gamma\in{(\Delta^\star)}_0^\circ}\tJ(\gamma)\,.\]
	If $d\geq 2s-r+1$ and $g=f x^{s+1-i}y^iz^i$ is polynomial of degree $d$ for $f\in\tS$ and $0\leq i\leq s-r$, then we have $g\in\tJ([B_\tau,\gamma])\cap \tJ([B_\tau,\gamma'])$, and  \[\partial_1(g[B_\tau,\gamma])+\partial_1(g[B_\tau,\gamma'])=g[\gamma]\;.\] 
	A similar follows for $B_{\tau'}$ and $\gamma''$. This together to {Case 2} leads to
	\[\tJ(\gamma)_d\subseteq  \partial_1\bigl(\tJ([\gamma,Z_\sigma]),\tJ([B_\tau,\gamma]),\tJ([B_\tau,\gamma'],\tJ([B_{\tau'},\gamma]),\tJ([B_{\tau'},\gamma''])\bigr)_d\bigr|_{\gamma}.\]
	Moreover, from {Case 1},
	\[\tJ(Z_\sigma)_d\subseteq \partial_1(\tJ([Z_\sigma,\gamma'']),\tJ([Z_\sigma,\gamma']), \tJ([Z_\sigma,B_{\tau''}]))_d|_{Z_\sigma},\] 
	and from {Case 3},
	\[\tJ(B_\tau)_d\subseteq\partial_1\bigl(\tJ([Z_\sigma,B_\tau]),\tJ([B_\tau,\gamma],\tJ([B_\tau,\gamma'])_d\bigr|_{B_\tau}.\]
	Therefore, for $d\geq 2s-r+1$ the graded piece at degree $d$ of each ideal associated to an interior vertex in $\sigma$ is contained in $\im(\partial_1)_d$. 
	
	Notice that if $\Delta$ is composed of only one triangle then the only interior vertex is $Z_\sigma$ and this implies  $H_0(\calJ)_{d}=0$.
	If not, we take a triangle $\sigma'\in\Delta\setminus\{\sigma\}$ with two vertices on the boundary of $\Delta\setminus\{\sigma\}$, and apply the previous argument to the complex $\Delta^\star\setminus\{\sigma\}$.
	After $f_2$-steps (equal to the number of triangles in $\Delta$), we will have considered all the interior vertices of $\Delta^\star$.
	We conclude that $H_0(\calJ)_{d}=0$ for any simplicial complex $\Delta$ with a finite number of triangles. 
	
	Then, if $s\geq\max\{r,2r-1\}$ and $d\geq 2s-r+1$, the dimension formula in Equation \eqref{eq:PS} can explicitly be written as 
	\begin{align}
		\dim\calS_{d}^{\br,\bs}(\Delta^\star)= \binom{d+2}{2} &+ 3f_2\binom{d-s+1}{2}+3f_2\biggl[(d-s)^2- \binom{d-2s+r}{2} \biggr]\label{eq:dimgenPS}\\
		&+2f_1^\circ\biggl[ (s-r+1) \binom{d-s +1}{2} -(s-r)\binom{d-s}{2}\biggr]\nonumber\\
		&-(f_0^\circ+f_2)\biggl[ \binom{d+2}{2}-\binom{s+2}{2}\biggr]
		-f_1^\circ\dim \tJ(B_\tau)_d\,,\nonumber
	\end{align}
	where $\dim \tJ(B_\tau)_d$ is given in Equation \eqref{eq:dimJB}.
	
	In particular,  for $s = 2r-1$ and $d=3r-1$, the Euler relations $f_1^\circ=2f_2-f_0+1$ and $f_0^\circ=f_2-f_0+2$ applied to \eqref{eq:dimgenPS} lead to 
	\begin{equation}\label{eq:dimPS}
		\dim\calS_{3r-1}^{\br,\bs}(\Delta^\star)=\frac{1}{2}r(r-1)f_2+r(2r+1)f_0
		\,.
	\end{equation}
	The dimension formula \eqref{eq:dimPS} was proved by Speleers in \cite[Theorem 5]{speleers_2013}.

	\begin{table*}[ht]
		\centering
		\begin{tabular}{|>{\centering\arraybackslash}m{1cm}|>{\centering\arraybackslash}m{1.5cm}|>{\centering\arraybackslash}m{2cm}|>{\centering\arraybackslash}m{1.5cm}|>{\centering\arraybackslash}m{1.5cm}|>{\centering\arraybackslash}m{2.5cm}|}
			\hline\hline
			\rowcolor{gray!10}
			\thead[c]{$(r,s)$} & \thead[c]{$d$} & \thead[c]{$\dim H_0(\calJ)_d$} & \thead[c]{$\mathrm{LB}\eqref{eq:lowerbound_computable}$} & \thead[c]{$\mathrm{LB}\eqref{eq:lowerbound}$} &
			\thead[c]{$\dim \calS_d^{r,s}(\Delta^\star)$} \\
			\hline\hline
			& 4 & 9 & 15 & 15 & 16 \\
			\cline{2-6}
			& 5 & 0 & 67 & 67 & 67 \\
			\cline{2-6}
			\multirow{-3}{*}{$(2,3)$} & 6 & 0 & 160 & 160 & 160 \\
			\hline
			\hline
			& 5 & 16 & 21 & 21 & 22 \\
			\cline{2-6}
			& 6 & 0 & 54 & 54 & 54 \\
			\cline{2-6}
			\multirow{-3}{*}{$(3,4)$} & 7 & 0 & 138 & 138 & 138 \\
			\hline
			\hline
			& 7 & 1 & 42 & 42 & 43 \\
			\cline{2-6}
			& 8 & 0 & 147 & 147 & 147 \\
			\cline{2-6}
			\multirow{-3}{*}{$(3,5)$} & 9 & 0 & 285 & 285 & 285 \\
			\hline
			\hline
		\end{tabular}
		\caption{
			The triangulation $\Delta$ is the Powell--Sabin 6-split shown in Figure \ref{fig:MSsplit}.
			The lower bounds $\mathrm{LB}\eqref{eq:lowerbound}$ and $\mathrm{LB}\eqref{eq:lowerbound_computable}$ coincide for the shown choices of $(r, s, d)$, and they both coincide with $\dim \calS_d^{r,s}(\Delta^\star)$ for large enough degree.
			The dimension of $\calS_d^{r,s}(\Delta^\star)$ was computed in \cite{speleers_2013} for $(r,s,d) \in \{ (2,3,5), (3,5,8)\}$, and that for $(r,s) = (3,4)$, we compute $\dim \calS_d^{r,s}(\Delta^\star)$ using Macaulay2 \cite{M2}.
		}
	\end{table*}
	\section{Concluding Remarks}\label{sec:conclusion}
	We have demonstrated how methods from homological algebra can be used to compute the dimension of supersmooth spline spaces on general triangulations; in particular, we have proved a combinatorial formula for the dimension of superspline spaces in sufficiently large degree.
	We also illustrated how homological algebra methods can be used to reproduce a variety of results from the literature \cite{chui85,sorokina_2013,floater_hu_2020,speleers_2013}, as well as generalizing some of them \cite{speleers_2013}. This opens several directions for future research:
	\begin{itemize}[style=unboxed,leftmargin=0cm]\renewcommand{\labelitemi}{\scalebox{0.5}{$\blacksquare$}}
		\item Supersmoothness can help defining spline spaces with both stable dimension and locally supported basis functions, retaining full approximation power and avoiding prohibitively high degrees.
		Consequently, in the future these methods should be combined with constructive approaches to build spline spaces that are useful for the finite element method, such as triangulations and T-meshes.
		
		\item As it was noted by Schenck in \cite{schenck2016algebraic}, the algebraic tools developed for the study of spline spaces on polyhedral complexes with uniform global smoothness and mixed supersmoothness across the codimension-1 faces had not been extended to the case we study in this paper. 
		As we observed, the algebraic approach to the dimension problem of splines with mixed supersmoothness at higher codimension faces of the partition leads to the consideration of ideals generated by products of powers of linear forms in several variables.
		In the case of generic forms, this type of ideals has been recently studied by DiPasquale, Flores, and Peterson in  \cite{dipasquale_flores_Peterson_2020} via apolarity. 
		It will be interesting to extend this approach to ideals generated by arbitrary products of powers of linear forms to study full vertex ideals and derive an improved lower bound, as well as deriving an upper bound	on the dimension of superspline spaces. While we have provided simple and computable lower bounds on the dimension, they only consider a simplified version of the vertex ideals at play.
		Considering the full vertex ideals is a first research direction that should be explored.
		\item The lower bound on $\dim \calS_d^{\br,\bs}(\Delta)$ proved in Theorem \ref{thm:lowerbound_alternate} gives the exact dimension of the superspline space in large enough degree $d$. It would be interesting to find the smallest value of $d$ from which the dimension formula holds; for this, results by Ibrahim and Schumaker in \cite{ibrahim91} might give a good estimate on the smallest degree for which homology term $H_0(\calJ)_d$ vanishes. 
		The analysis of the quotient of the vertex ideals $\overline\tJ(\gamma)/\tJ(\gamma)$ relates to the study of  intrinsic smoothness properties of splines. An estimate on the smallest degree for which $\dim \tJ(\gamma)_d=\dim \overline\tJ(\gamma)_d$ will also contribute to a better understanding of $\dim \calS_d^{\br,\bs}(\Delta)$, and it would be interesting to explore the implications of this algebraic approach combined with the results and techniques developed in \cite{sorokina_2010,sorokina_2013,floater_hu_2020} for intrinsic supersmoothness using Bernstein-B\'ezier methods.
	\end{itemize}
	\subsection*{Acknowledgements} We would like to thank Michael DiPasquale for providing many helpful comments and suggestions.  	
	\appendix
	
	\section{}\label{appendix}
	This appendix is devoted to the detailed proof of Lemma \ref{lemma:homology}. 	The result follows by a slight modification of the proof by Schenck and Stillman in \cite[Lemma 3.2]{schenck1997local} in the case of splines $\calS_d^r(\Delta)$ with global uniform smoothness $r$. 
	
	First, we recall the following lemma.   
	\begin{lemma}[{\cite[Lemma 3.3]{schenck1997local}}]\label{lemma:ordering} If $\Delta\subseteq\R^2$ is a triangulation, then there exists a total order $\succ$ on $\Delta_0$ such that for every $\gamma\in\Delta_0^\circ$ there exist vertices $\gamma'$ and $\gamma''$ adjacent to $\gamma$, with $\gamma\succ \gamma', \gamma''$ and such that the edges $\tau=[\gamma,\gamma']$ and  $\tau=[\gamma,\gamma'']$ have different slopes.
	\end{lemma}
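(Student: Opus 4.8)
The plan is to build the order from the top down by a shelling-type peeling of the triangulated disk, exploiting the fact that the only thing the lemma asks for at an interior vertex $\gamma$ is a single triangle of $\Delta$ containing $\gamma$ whose two other vertices receive smaller indices. Concretely, I would produce a total order $v_1\prec\cdots\prec v_N$ on $\Delta_0$ such that for every interior vertex $v_j$ the set $\{v_1,\dots,v_{j-1}\}$ contains two vertices $\gamma',\gamma''$ with $[v_j,\gamma',\gamma'']\in\Delta_2$. This immediately yields the lemma: since $[v_j,\gamma',\gamma'']$ is a nondegenerate triangle of $\Delta$, the edges $[v_j,\gamma']$ and $[v_j,\gamma'']$ lie in $\Delta_1$ and necessarily have distinct slopes (three vertices of a triangle are not collinear), and both $\gamma',\gamma''$ precede $v_j$ in $\succ$.

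To construct such an order I would assign indices from $N$ down to $1$ while maintaining a nested family of subcomplexes $\Delta=\Delta^{(0)}\supset\Delta^{(1)}\supset\cdots$, where $\Delta^{(t)}$ is the triangulated disk spanned by the not-yet-indexed vertices. At step $t$ I choose a boundary vertex $v$ of $\Delta^{(t)}$ whose deletion (removing $v$ together with every face containing it) again leaves a triangulated disk, give $v$ the largest free index, and set $\Delta^{(t+1)}=\Delta^{(t)}\setminus\St v$. The key observation is that whenever the removed vertex $v$ happens to be interior in $\Delta$ it is still contained in at least one triangle of $\Delta^{(t)}$ (every vertex of a pure triangulated disk lies in a triangle), and the two remaining vertices of that triangle are not yet indexed and so will receive smaller indices. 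Thus the triangle with two smaller-indexed mates required in the previous paragraph is available automatically, and no further bookkeeping on slopes or on counting ``lower neighbours'' is needed. I note in passing that the naive alternative of ordering by a generic linear functional fails: the central vertex of a Clough--Tocher split, with its three incident edges spread at angles $0,\tfrac{2\pi}{3},\tfrac{4\pi}{3}$, has only one neighbour in the open lower half-plane, so it would never acquire two lower neighbours; the peeling argument is precisely what repairs this.

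The main obstacle is therefore the purely combinatorial-topological step of guaranteeing, at each stage, the existence of a boundary vertex of $\Delta^{(t)}$ whose deletion again yields a triangulated disk. The subtlety is a possible \emph{pinch}: the link $\lk(v)$ of a boundary vertex $v$ is a path $w_0w_1\cdots w_k$ joining its two boundary neighbours $w_0,w_k$, and deleting $v$ is safe exactly when none of the interior path vertices $w_1,\dots,w_{k-1}$ already lies on $\partial\Delta^{(t)}$, for otherwise the updated boundary cycle would touch itself and the result would not be a disk. I would dispose of this by the standard fact that a triangulated $2$-disk with more than one triangle always admits such a removable boundary vertex; this can be established by induction on the number of triangles, using the connectivity of the dual graph (equivalently, by producing a shelling of $\Delta^{(t)}$), which lets one strip off a boundary vertex without pinching. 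Once this peeling is in place the induction terminates at a single triangle, whose three vertices are assigned the smallest indices in any order, and the construction is complete.
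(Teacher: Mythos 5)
The paper offers no proof of this lemma at all---it is quoted verbatim from \cite[Lemma 3.3]{schenck1997local}---so your argument has to stand on its own. Its overall shape is viable: peel the disk from the outside by removing boundary-vertex stars, and use a triangle of the current complex to supply two smaller-indexed neighbours, whose edges automatically have distinct slopes. The topological fact you invoke is also true: a triangulated disk with at least two triangles always has a boundary vertex incident to no \emph{chord} (interior edge joining two boundary vertices), and deleting the star of such a vertex leaves a triangulated disk; since chords are pairwise non-crossing, a leaf region of the chord decomposition of the boundary polygon supplies such a vertex, in the spirit of the two-ears theorem. (Your suggested justification via dual-graph connectivity or shellability is only a pointer---a shelling removes triangles one at a time, not vertex stars---and you also quietly assume $\Delta$ is a disk, which the statement as written does not.)

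The genuine gap is the termination step, where you assign the three vertices of the final triangle ``the smallest indices in any order.'' Your key observation covers only vertices that get \emph{removed}; the last three vertices are never removed, and nothing in the construction prevents vertices that are interior in $\Delta$ from surviving to the final triangle. An interior vertex placed first or second among those three has at most one smaller-indexed neighbour, so the conclusion fails for it, and the unconstrained choices in your algorithm can in fact dead-end. Concretely, take the square $A=(0,0)$, $B=(3,0)$, $C=(3,3)$, $D=(0,3)$ with interior vertices $E=(1,1)$, $F=(2,2)$ and triangles $ABE$, $BFE$, $BCF$, $CDF$, $DEF$, $DAE$. Peeling $A$, then $D$, then $C$ is legal at every step (each is a chord-free boundary vertex of the current disk), and it leaves the triangle $BFE$, which contains \emph{both} interior vertices $E$ and $F$. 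No ordering of $\{B,E,F\}$ can finish the job: the only neighbours of $E$ in $\Delta$ among these three are $B,F$, and the only neighbours of $F$ are $B,E$, so each of $E$ and $F$ would have to come last, which is impossible. Hence your claim that ``no further bookkeeping is needed'' is false: you must constrain the choice of which removable vertex to peel (for instance, always prefer one that is interior in $\Delta$, and then prove that at most one interior vertex of $\Delta$ reaches the final triangle, where it must be placed last among the three), or otherwise repair the base case.
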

	\begin{proof}[Proof of Lemma~\ref{lemma:homology}] 
		We show the claim by proving that $H_0(\calJ)$ has finite length.
		For a vertex $\gamma\in\Delta_0^\circ$ and $f\in\tJ(\gamma)$, we denote by $f[\gamma]$ the corresponding element in $H_0(\calJ)$, where $\calJ$ is the complex of ideals defined in \eqref{eq:complexJ}. 
		If $\gamma$ is a boundary vertex we write $f [\gamma] = 0$ for any $f\in\tS$.
		We prove that $H_0(\calJ)$ has finite length by showing that there exist a sufficiently large integer $N$ such that $\langle x,y,z\rangle^Nf[\gamma]=0$ in $H_0(\calJ)$ for all $\gamma\in\Delta_0^\circ$ and  $f\in\tJ(\gamma)$.  
		
		First, we fix a total ordering ``$<$" on the  vertices of $\Delta$ such that for each vertex $\gamma\in\Delta_0^\circ$ there are two edges $\tau'=[\gamma,\gamma']$ and $\tau''=[\gamma,\gamma'']$ with different slopes such that each of the vertices $\gamma'$ and  $\gamma''$ is either on the boundary of $\Delta$ or is $<$ than $\gamma$. We know that such an ordering exists by Lemma \ref{lemma:ordering}.
		Take such a vertex $\gamma\in\Delta_0^\circ$, and suppose $\langle x,y,z\rangle^Ng[\nu]=0$ for all vertices $\nu<\gamma$ and $g \in \tJ(\nu)$, for some integer $N>0$.
		
		If $\tau'=[\gamma,\gamma']$, let $s=\max\{s_{\gamma'}, s_{\gamma},r_{\tau'}\}$. By definition \eqref{eq:edgesgen}, the ideal $\tJ(\tau')= \langle \ell_\tau^{r+1}\rangle\cap \hat\fm_\gamma^{s_\gamma+1}\cap \hat\fm_{\gamma'}^{s_{\gamma'}+1}$, where $\hat\fm_{\gamma}$ and $\hat\fm_{\gamma'}$ are the ideals of polynomials in $\tS$ vanishing at $\hat{\gamma}$ and $\hat{\gamma}'$, respectively. 
		In particular, $\ell_{\tau'}^{s+1}\in\tJ(\tau)$ and thus
		\begin{equation}\label{eq:prooflemma}
			\ell_{\tau'}^{s+1}f[\gamma]
			=
			\ell_{\tau'}^{s+1}f[\gamma']\,.
		\end{equation}
		Since $\ell_{\tau'}$  
		is a linear form in $\tS$ and $\gamma'\in\partial\Delta$ or $\gamma'<\gamma$, by \eqref{eq:prooflemma} it follows that some power of $\ell_{\tau'}$ annihilates $f[\gamma]$. 
		Similarly, some power of $\ell_{\tau''}$
		annihilates $f[\gamma]$.
		
		On the other hand, if $f\in\tJ(\gamma)$ is given by $f=\sum_{\tau\ni\gamma} f_\tau$ for $f_\tau\in\tJ(\tau)$, we have
		\[f[\gamma]=\sum_{\tau=[\gamma,\gamma(\tau)]}f_\tau[\gamma(\tau)]\,,\]
		where $\gamma(\tau)\in\tau$ denotes the vertex adjacent to $\gamma$ on the edge $\tau$.
		
		For an edge $\tau\in\lk(\gamma)$, denote by $\ell_{\tau}$ a choice of a linear form vanishing on $\hat{\tau}$. Define $p=\prod\bigl\{\ell_\tau\colon\tau\in\lk(\gamma)\bigr\}$ and $s'=\max\bigl\{s_\nu,r_\tau\colon \nu,\tau\in\St(\gamma)\bigr\}$. Then, by construction $p(\gamma)\neq 0$ and $p^{s'+1}f[\gamma]=p^{s'+1} f[\nu]$ in $H_0(\calJ)$ for any vertex $\nu\in\lk(\gamma)$.
		In particular, if $\nu=\gamma(\tau')=\gamma'$ we have $p^Nf[\gamma']=0$, and therefore $p^Nf[\gamma]=0$.
		
		Hence, some power of $\ell_{\tau'}$,  $\ell_{\tau''}$, and $p$ annihilate $f[\gamma]$. But $\langle x,y,z \rangle^{M}\subseteq \langle \ell_{\tau'}, \ell_{\tau''}, p \rangle$ for a sufficiently large integer $M$, and it follows $\langle x,y,z\rangle^Mf[\gamma]=0$. 
	\end{proof}	


\begin{thebibliography}{10}
	
	\bibitem{alfeld84}
	P.~Alfeld.
	\newblock A trivariate {C}lough--{T}ocher scheme for tetrahedral data.
	\newblock {\em Comput. Aided Geom. Design}, 1(2):169--181, 1984.
	
	\bibitem{alfeld03}
	P.~Alfeld and L.~Schumaker.
	\newblock Upper and lower bounds on the dimension of superspline spaces.
	\newblock {\em Constr. Approx.}, 19(1):145--161, 2003.
	
	\bibitem{billera1988homology}
	L.~Billera.
	\newblock Homology of smooth splines: generic triangulations and a conjecture
	of {S}trang.
	\newblock {\em Trans. Amer. Math. Soc.}, 310(1):325--340, 1988.
	
	\bibitem{billeraR91}
	L.~Billera and L.~Rose.
	\newblock A dimension series for multivariate splines.
	\newblock {\em Discrete Comput. Geom.}, 6(2):107--128, 1991.
	
	\bibitem{chui88}
	C.~Chui.
	\newblock {\em Multivariate splines}.
	\newblock Society for Industrial and Applied Mathematics (SIAM), Philadelphia,
	PA, 1988.
	
	\bibitem{chui85}
	C.~Chui and M.~Lai.
	\newblock On bivariate vertex splines.
	\newblock In {\em Multivariate approximation theory, {III} ({O}berwolfach,
		1985)}, volume~75 of {\em Internat. Schriftenreihe Numer. Math.}, pages
	84--115. Birkh\"{a}user, Basel, 1985.
	
	\bibitem{chui90}
	C.~Chui and M.~Lai.
	\newblock On bivariate super vertex splines.
	\newblock {\em Constr. Approx.}, 6(4):399--419, 1990.
	
	\bibitem{UsAlg}
	D.~Cox, J.~Little, and D.~O'Shea.
	\newblock {\em Using algebraic geometry}, volume 185 of {\em Graduate Texts in
		Mathematics}.
	\newblock Springer, New York, second edition, 2005.
	
	\bibitem{DiPasquale18}
	M.~DiPasquale.
	\newblock Dimension of mixed splines on polytopal cells.
	\newblock {\em Math. Comp.}, 87(310):905--939, 2018.
	
	\bibitem{dipasquale_flores_Peterson_2020}
	M.~DiPasquale, Z.~Flores, and C.~Peterson.
	\newblock On the apolar algebra of a product of linear forms.
	\newblock In {\em Proceedings of the 45th International Symposium on Symbolic
		and Algebraic Computation}, ISSAC'20, pages 130--137, 2020.
	
	\bibitem{MN-PaperB}
	M.~{DiPasquale} and N.~{Villamizar}.
	\newblock A lower bound for the dimension of tetrahedral splines in large
	degree.
	\newblock {\em arXiv:2007.12274.}, 2020.
	
	\bibitem{MN-PaperA}
	M.~{DiPasquale} and N.~{Villamizar}.
	\newblock A lower bound for splines on tetrahedral vertex stars.
	\newblock {\em SIAM J. Appl. Algebra Geom.}, 5(2):250--277, 2021.
	
	\bibitem{farin80}
	G.~Farin.
	\newblock B\'ezier polynomials over triangles and the construction of piecewise
	${C}^r$ polynomials.
	\newblock TR/91, Dept. of Mathematics, Brunel University, Uxbridge, UK., 1980.
	
	\bibitem{floater_hu_2020}
	M.~Floater and K.~Hu.
	\newblock A characterization of supersmoothness of multivariate splines.
	\newblock {\em Adv. Comput. Math.}, 46(5):70, 2020.
	
	\bibitem{geramita1998fat}
	A.~Geramita and H.~Schenck.
	\newblock Fat points, inverse systems, and piecewise polynomial functions.
	\newblock {\em J. Algebra}, 204(1):116--128, 1998.
	
	\bibitem{M2}
	D.~Grayson and M.~Stillman.
	\newblock Macaulay2, a software system for research in algebraic geometry.
	\newblock Available at \href{http://www.math.uiuc.edu/Macaulay2/}
	{http://www.math.uiuc.edu/Macaulay2/}.
	
	\bibitem{grovselj2021super}
	J.~Gro{\v{s}}elj and H.~Speleers.
	\newblock Super-smooth cubic {P}owell--{S}abin splines on three-directional
	triangulations: B-spline representation and subdivision.
	\newblock {\em J. Comput. Appl. Math.}, 386:113245, 2021.
	
	\bibitem{hughes2005isogeometric}
	T.~Hughes, J.~Cottrell, and Y.~Bazilevs.
	\newblock Isogeometric analysis: {CAD}, finite elements, {NURBS}, exact
	geometry and mesh refinement.
	\newblock {\em Comput. Methods Appl. Mech. Engrg.}, 194(39-41):4135--4195,
	2005.
	
	\bibitem{ibrahim91}
	A.~Ibrahim and L.~Schumaker.
	\newblock Super spline spaces of smoothness {$r$} and degree {$d\geq 3r+2$}.
	\newblock {\em Constr. Approx.}, 7(3):401--423, 1991.
	
	\bibitem{lai2007spline}
	M.~Lai and L.~Schumaker.
	\newblock {\em Spline functions on triangulations}, volume 110.
	\newblock Cambridge University Press, 2007.
	
	\bibitem{morgan75}
	J.~Morgan and R.~Scott.
	\newblock A nodal basis for ${C}^1$ piecewise polynomials of degree $n \geq 5$.
	\newblock {\em Math. Comput.}, 29(131):736--740, 1975.
	
	\bibitem{mourrain2013homological}
	B.~Mourrain and N.~Villamizar.
	\newblock Homological techniques for the analysis of the dimension of
	triangular spline spaces.
	\newblock {\em J. Symb. Comput.}, 50:564--577, 2013.
	
	\bibitem{Spect}
	H.~Schenck.
	\newblock A spectral sequence for splines.
	\newblock {\em Adv. in Appl. Math.}, 19(2):183--199, 1997.
	
	\bibitem{schenck2016algebraic}
	H.~Schenck.
	\newblock Algebraic methods in approximation theory.
	\newblock {\em Comput. Aided Geom. Des.}, 45:14--31, 2016.
	
	\bibitem{schenck1997family}
	H.~Schenck and M.~Stillman.
	\newblock A family of ideals of minimal regularity and the hilbert series of
	${C}^r(\delta)$.
	\newblock {\em Adv. in Appl. Math.}, 19(2):169--182, 1997.
	
	\bibitem{schenck1997local}
	H.~Schenck and M.~Stillman.
	\newblock Local cohomology of bivariate splines.
	\newblock {\em J. Pure Appl. Algebra}, 117:535--548, 1997.
	
	\bibitem{YSS20}
	H.~Schenck, M.~Stillman, and B.~Yuan.
	\newblock A new bound for smooth spline spaces.
	\newblock {\em J. Comb. Algebra}, 4(4):359--367, 2020.
	
	\bibitem{schumaker1984bounds}
	L.~Schumaker.
	\newblock Bounds on the dimension of spaces of multivariate piecewise
	polynomials.
	\newblock {\em Rocky Mountain J. Math.}, 14(1):251--264, 1984.
	
	\bibitem{schumaker89}
	L.~Schumaker.
	\newblock On super splines and finite elements.
	\newblock {\em SIAM J. Numer. Anal.}, 26(4):997--1005, 1989.
	
	\bibitem{BorisTatyana2013}
	B.~Shekhtman and T.~Sorokina.
	\newblock Intrinsic supermoothness.
	\newblock {\em J. Concr. Appl. Math.}, 13(3-4):232--241, 2015.
	
	\bibitem{sorokina_2010}
	T.~Sorokina.
	\newblock Intrinsic supersmoothness of multivariate splines.
	\newblock {\em Numer. Math.}, 116(3):421--434, 2010.
	
	\bibitem{sorokina_2013}
	T.~Sorokina.
	\newblock Redundancy of smoothness conditions and supersmoothness of bivariate
	splines.
	\newblock {\em IMA J. Numer. Anal.}, 34(4):1701--1714, 12 2013.
	
	\bibitem{spanier}
	E.~Spanier.
	\newblock {\em Algebraic topology}.
	\newblock Springer-Verlag, New York, [1995].
	\newblock Corrected reprint of the 1966 original.
	
	\bibitem{speleers_2013}
	H.~Speleers.
	\newblock Construction of normalized {B}-splines for a family of smooth spline
	spaces over {P}owell-{S}abin triangulations.
	\newblock {\em Constr. Approx.}, 37(1):41--72, 2013.
	
	\bibitem{mixed21}
	D.~Toshniwal and M.~DiPasquale.
	\newblock Counting the dimension of splines of mixed smoothness.
	\newblock {\em Adv. Comput. Math.}, 47(1):6, 2021.
	
	\bibitem{mixed20}
	D.~Toshniwal and N.~Villamizar.
	\newblock Dimension of polynomial splines of mixed smoothness on {T}-meshes.
	\newblock {\em Comput. Aided Geom. Design}, 80:101880, 10, 2020.
	
	\bibitem{zen70}
	A.~\v{Z}en\'i\v{s}ek.
	\newblock Interpolation polynomials on the triangle.
	\newblock {\em Numer. Math.}, 15:283--296, 1970.
	
	\bibitem{wang75}
	R.~Wang.
	\newblock Structure of multivariate splines, and interpolation.
	\newblock {\em Acta Math. Sinica}, 18(2):91--106, 1975.
	
	\bibitem{YS19}
	B.~Yuan and M.~Stillman.
	\newblock A counter-example to the {S}chenck-{S}tiller ``{$2r+1$}'' conjecture.
	\newblock {\em Adv. in Appl. Math.}, 110:33--41, 2019.
	
	\bibitem{zlamal68}
	M.~Zl\'amal.
	\newblock On the finite element method.
	\newblock {\em Numer. Math.}, 12:394--409, 1968.
	
\end{thebibliography}
\end{document}